\declaretheoremstyle[
    headfont=\bfseries, 
    notefont=\bfseries, 
    notebraces={(}{)}, 
    bodyfont=\normalfont, 
]{boldstyle}
\declaretheorem[name=Theorem,style=boldstyle,numberwithin=subsection]{theorem}
\declaretheorem[name=Definition,style=boldstyle,numberlike=theorem]{definition}
\declaretheorem[name=Proposition,style=boldstyle,numberlike=theorem]{proposition}
\declaretheorem[name=Lemma,style=boldstyle,numberlike=theorem]{lemma}
\declaretheorem[name=Example,style=boldstyle,numberlike=theorem]{example}
\declaretheorem[name=Note,style=boldstyle,numberlike=theorem]{note}
\begin{document}

\newcommand{\subsectiontexta}{Encodings in Arithmetic Logic and Social Choice Theory}
\newcommand{\subsectiontextb}{Self-Reference Systems}
\newcommand{\subsectiontextc}{The Fixed-Point Property}
\newcommand{\subsectiontextd}{Embeddable Self-Reference Systems}
\newcommand{\subsectiontexte}{The Abstract Diagonalisation Lemma}
\newcommand{\subsectiontextf}{Consistency and Incomputability}
\newcommand{\subsectiontextg}{An Abstract Incomputability Argument}

\newcommand\ldoteq{\mathrel{\ensurestackMath{%
            \stackengine{-.5ex}{\lessdot}{-}{U}{c}{F}{F}{S}}}}

\newcommand\sbullet[1][.75]{\mathbin{\vcenter{\hbox{\scalebox{#1}{$\bullet$}}}}}

\newcommand{\N}{\mathbb{N}}

\newcommand{\Obar}[1]{\ulcorner #1 \urcorner}
\newcommand{\Obarvar}[0]{\Obar{-}}

\newcommand{\NGodel}[1]{G'(#1)}
\newcommand{\NGodelvar}[0]{G'}

\newcommand{\enc}[0]{\text{enc}}
\newcommand{\emb}[0]{\text{emb}}
\newcommand{\diag}[0]{\text{diag}}
\newcommand{\app}[0]{\text{app}}
\newcommand{\comp}[0]{\sbullet}

\newcommand{\const}[0]{\Join}
\newcommand{\inconst}[0]{\not\const}
\newcommand{\Provable}[0]{Provable}

\newcommand{\DObar}[1]{\ulcorner\mkern-7mu\ulcorner\mkern-3mu#1%
    \mkern-3mu\urcorner\mkern-7mu\urcorner}
\newcommand{\DObarvar}[0]{\DObar{-}}

\newcommand{\A}{\mathcal{A}}
\newcommand{\T}{\Upsilon}
\newcommand{\Theory}{\mathcal{T}}

\newcommand{\F}{\mathcal{F}}
\newcommand{\C}{\mathcal{C}}
\newcommand{\D}{\mathcal{D}}
\newcommand{\E}{\mathcal{E}}
\newcommand{\G}{\mathcal{G}}
\renewcommand{\L}{\mathcal{L}}

\renewcommand{\c}{\textbf{c}}
\renewcommand{\i}{\textbf{i}}

\newcommand{\uc}{\underline{\textbf{c}}}
\newcommand{\ui}{\underline{\textbf{i}}}

\renewcommand{\P}{\mathcal{P}}
\newcommand{\PN}{\P^N}

\newcommand{\PC}{\underline{\P}}
\newcommand{\PCN}{\underline{\P}^N}
\newcommand{\Welfare}{w}

\newcommand{\join}{+}
\newcommand{\bwedge}{\bm{\wedge}}

\newcommand{\indiff}{e}

\newcommand{\term}[1]{\textit{\textbf{#1}}}
\newcommand{\saymath}[1]{\text{\say{$#1$}}}

\newcommand{\tick}{\ding{51}}
\newcommand{\cross}{\ding{55}}
\newcommand{\sometimes}{Satisfiable}


\newcommand{\parsplit}{\par\vspace{6pt}}
\newcommand{\finestsection}[1]{\noindent\textbf{#1}\parsplit}

\newenvironment{oneshot}[1]{\@begintheorem{#1}{\unskip}}{\@endtheorem}

\title{Comparing and Contrasting Arrow's Impossibility Theorem and Gödel's Incompleteness Theorem}
\date{November 10, 2025}

\renewcommand\Affilfont{\small}

\author[1,2]{Ori Livson \thanks{Corresponding author: ori.livson@sydney.edu.au}}
\author[1,2]{Mikhail Prokopenko}

\affil[1]{The Centre for Complex Systems,  University of Sydney, NSW 2006, Australia}
\affil[2]{School of Computer Science, Faculty of Engineering, University of Sydney, NSW 2006, Australia}

\maketitle

\vspace{-24pt}

\begin{abstract}
    Incomputability results in formal logic and the Theory of Computation (i.e., incompleteness and undecidability) have deep implications for the foundations of mathematics and computer science. Likewise, Social Choice Theory, a branch of Welfare Economics, contains various impossibility results that place limits on the potential fairness, rationality and consistency of social decision-making processes. However, a relationship between the fields' most seminal results: Gödel's First Incompleteness Theorem of formal logic, and Arrow's Impossibility Theorem in Social Choice Theory is lacking. In this paper, we address this gap by introducing a general mathematical object called a \textit{Self-Reference System}. Correspondences between the two theorems are formalised by abstracting well-known diagonalisation and fixed-point arguments, and consistency and completeness properties of provability predicates in the language of Self-Reference Systems. Nevertheless, we show that the mechanisms generating Arrovian impossibility and Gödelian incompleteness have subtle differences.
\end{abstract}

\section{Introduction}

\subsection{Incomputability}

Incomputability refers to the concept in computer science and mathematics in which a problem is fundamentally unsolvable. Examples include the existence of true but unprovable statements (e.g., Gödel's (First) Incompleteness Theorem~\cite{godel}), problems no algorithm can solve for all inputs (e.g., the undecidability of the Halting Problem~\cite{turing}), or problems where attempting to evaluate a property leads to a contradiction (e.g., Russell's Paradox~\cite{principles-of-mathematics}). Incomputability is used as an umbrella term in logic and computer science~\cite{hoare-incomputability} as well as in the social sciences to describe phenomena deemed to be unpredictable or incalculable~\cite[p. vii]{incomputability-political-economy}. In this study, incomputability is understood as the inability to establish a formal property in a system without encountering a logical contradiction, i.e., without necessarily employing algorithmic notions of computability. The use of the term (along with \say{Uncomputable}) has broadened to include physics and biology~\cite{tangled-hierarchies,the-incomputable-reality,longo-incomputability,incomputability-experiment}, and Complex Systems theory~\cite{prokopenko-liar-paradox,casti91,casti94}.

\parsplit

Incomputability results have profound implications for the foundations of mathematics, highlighting the limits of formal systems~\cite{hoare-incomputability}. Incomputability is often established using \textit{Diagonalisation} and \textit{Fixed-Point} arguments \cite{jacquette-diagonalisation,simmons-good-bad-diagonalisation,yanofsky}. These arguments have been generalised to demonstrate that various unsolvable problems are examples of abstract Diagonalisation and Fixed-Point arguments~\cite{gaifman,prokopenko-liar-paradox,smullyan-diagonalization,smullyan-theory-formal-systems}. However, Diagonalisation and Fixed-Point arguments are rarely applied to the problems outside of Computer Science and Logic such as Social Decision-Making and Complex Systems Theory. Some notable exceptions include the study of Universal Spin Systems~\cite{gonda} and the Brandenburger-Keisler paradox of Epistemic Game Theory~\cite{brandenburger}.

\parsplit

Moreover, many incomputability results in Social Decision-Making and Complex Systems Theory are  called \say{impossibility} or \say{no-go} results (e.g.,\\\cite{arrow-abramsky, wolpert-monotheism}) rather than incomputability results. A notable impossibility result in Social Choice Theory is Arrow's Impossibility Theorem~\cite{arrow-original}, which demonstrates the inability to devise a ranked-choice voting method that jointly satisfies a number of intuitive and seemingly desirable constraints. In this paper, we investigate the relationship between Arrow's Impossibility Theorem and Gödel's Incompleteness Theorem, by comparing and contrasting the mechanisms that generate impossibility in Social Choice Theory and incompleteness in Logic. This in done by introducing a general mathematical object called a Self-Reference System.

\subsection{Arrow's Impossibility Theorem}\label{introduction:arrow}

Arrow's Impossibility Theorem is a seminal result in Social Choice Theory, a branch of Economics that studies methods of aggregating individual inputs (e.g., votes, judgements, utility, etc.) into group outputs (e.g., election outcomes, sentencings, policies)~\cite{sep-social-choice}. Social Choice Theory is valuable in its ability to study how social-decision making \textit{can} be done, rather than how \textit{it is} done~\cite{possibility-of-social-choice}. Arrow's Impossibility Theorem reveals limitations in ranked-choice decision-making, which challenge economists' and policymakers' assumptions about the possibility of perfectly fair, rational, and consistent method for collective decision-making processes.

\parsplit

In short, Arrow's Impossibility Theorem  states that any ranked-choice decision-making process that satisfies two specific constraints (Unanimity and Independence of Irrelevant Alternatives) either fails to always produce a valid outcome or has a \term{dictator}. A dictator is a fixed individual that no aggregate outcome ever contradicts. In other words, the group must always share the dictator's preferences, whenever they are strict (i.e., the dictator is not indifferent to them). The existence of a dictator is a significant limitation on the outcomes attainable by ranked-choice decision-making processes.

\parsplit

Moreover, by generalising D'Antoni's work~\cite{dantoni}, Livson and Prokopenko \cite{paper0-arxiv} have established that Non-Dictatorship given the other constraints specifically entails that aggregation leads to a contradictory preference cycle for some collection of individual preferences, i.e., an outcome where for alternatives $X$, $Y$ and $Z$: $X$ is strictly preferred to $Y$, $Y$ to $Z$, and $Z$ to $X$. In this paper, we will establish formal relationships between incompleteness in logical theories of arithmetic and impossibility with respect to aggregation that produces contradictory preference cycles. Moreover, we establish a relationship between individual preferences that aggregate to contradictory preference cycles and Gödel sentences in logic, which in essence assert their own non-provability. This is significant due to the pervasiveness of circular and self-referential paradoxes in occurrences of incomputability~\cite{yanofsky,tangled-hierarchies, prokopenko-liar-paradox}.

\parsplit

Incomputability results in Social Choice Theory have been established in other contexts, related to Arrow's Impossibility Theorem. For example, although Fishburn's Possibility Theorem~\cite{fishburn} demonstrates that a dictator is not necessitated in the Arrovian framework when allowing for infinitely many individuals, Mihara~\cite{mihara} shows a dictator is still necessitated when restricting to a certain notion of \textit{computable} decision-making processes~\cite{hall-mihara}. Other examples of incomputability results in Social Choice Theory include Parmann~\cite{parmann} proving that certain modal logics which model strategic voting are undecidable, and Tanaka~\cite{tanaka-arrow-diagonalisation} showing of the problem of determining whether a decision-making process has a dictator is undecidable. However, a formal relationship between the original (i.e., finite) Arrow's Impossibility Theorem and Gödel's Incompleteness Theorem has not been established to date. In this paper, we aim to address this gap by expressing impossibility in the sense of Arrow, and Incompleteness in the sense of Gödel in the same terms in a more general theory of incomputability.

\subsection{Summary of Results}

Our framework employs a generalised notion of \term{encoding} --- a function from a set of \term{expressions} to a set of \term{constants}. In logical theories of arithmetic such as Peano Arithmetic (henceforth called \term{Arithmetic Logic}), expressions are well-formed formulae, constants are numbers, and encoding is Gödel numbering, which assigns each formula a unique numeric code. In Social Choice Theory, constants are preference relations (e.g., of a single individual or an aggregate outcome), expressions are profiles --- a finite collection of individual preference relations, and encoding is a Social Welfare Function, which assigns each profile an aggregate preference relation. Importantly, an aggregate outcome understood as an encoding of individual preferences is highly \term{lossy}. This is in contrast to Gödel numbering, which is lossless because one can always decode a Gödel number back to its underlying formula.

\parsplit

We additionally employ a mechanism for applying expressions to encodings, called an \term{application function}, e.g., substituting a Gödel number for a free variable of a predicate, or comparing an individual's preferences to an aggregate's preferences. Diagonalisation arises out of application of an expression to its own encoding. Insofar as an encoding refers to (or is coupled with) its underlying expression, diagonalisation is self-referential. We call a choice of encoding and application function a \term{Self-Reference System}.

\parsplit

The correspondence between in Gödel and Arrow's theorems is established in terms of a shared role played by Gödel Sentences, and by profiles that aggregate to contradictory preference cycles in the Self-Reference Systems instantiated in each domain. This correspondence is formalised by abstracting to the language of Self-Reference Systems: diagonalisation and fixed-point arguments, and properties applicable to provability predicates such as completeness and the existence of Gödel sentences. Nevertheless, the extent of this correspondence reveals interesting differences between the mechanisms that underlie incomputability and impossibility in each domain.

\parsplit

We begin by showing that the well-known Diagonalisation Lemma used in proofs of Gödel's Incompleteness Theorem, and a Social Welfare Function having a dictator corresponds to the existence of a \term{diagonaliser} in a Self-Reference System (Theorems \ref{theorem:arithmetic-logic-fixed-points}-\ref{theorem:dictator-as-diagonaliser}). The fixed-point property that underlies the Diagonalisation Lemma is a key element of Gödel's Incompleteness Theorem due to its entailment of Gödel sentences. However, this property (i.e., dictatorship) holding plays the opposite role in Arrow's Impossibility Theorem because it prevents aggregation to contradictory preference cycles.

\parsplit

A more significant overlap arises through the abstraction of properties applicable to provability predicates. Specifically, we restate constraints on provability predicates (i.e., equations concerning Lindenbaum Algebras) using the general language of Self-Reference Systems. In logic, the existence of Gödel Sentences renders the consistency and completeness of a theory as being mutually exclusive. When a pair of expressions within a Self-Reference System resembles, in abstract terms, a Gödel sentence and a provability predicate --- yet the corresponding notions of consistency and completeness are mutually exclusive --- we refer to that Self-Reference System as a \term{quasi-Gödelian system}. Specifically, a quasi-Gödelian system is defined with respect to an expression $\T$ that abstracts the role of a provability predicate. A Self-Reference System is a quasi-Gödelian system with respect to $\T$ if there exists another expression $d$ such that the pair $(\T,d)$ jointly satisfies a number of abstract properties in a manner related to Gödel's Incompleteness Theorem. Namely, that $d$ is a \term{quasi-Gödel sentence} with respect to $\T$, and that for $(\T, d)$, two \term{quasi-consistency} and \term{quasi-completeness} constraints are mutually exclusive. We use the prefix~\say{quasi-} to emphasise that, although for example, quasi-completeness does not directly pertain to provability-based completeness, it encapsulates a significant algebraic property that underpins the notion of completeness.

\parsplit

Both Gödel's Incompleteness Theorem and Arrow's Impossibility Theorem are shown to correspond to statements about the existence of quasi-Gödelian systems. In logic, the existence of Gödel sentences yields that a corresponding Self-Reference System is a quasi-Gödelian system with respect to its provability predicate (Theorem \ref{theorem:godel-abstract}). In Social Choice Theory, the delineating of when a particular Self-Reference System is a quasi-Gödel system (Theorems \ref{theorem:condorcet-abstract}-\ref{theorem:dictator-abstract-2}) mirrors the characterisation of Arrow’s Impossibility Theorem wherein a Social Welfare Function either produces contradictory preference cycles or has a dictator~\cite{dantoni,paper0-arxiv}.

\parsplit

Thus demonstrating that key properties underlying incompleteness in Arithmetic Logic characterise the structure of certain Social Welfare Functions that produce contradictory preference cycles reveals a striking overlap between incomputability and impossibility between Gödel and Arrow's Theorems.

\section{Background}\label{section:background}

In Sections \ref{subsection:background-godel} and \ref{subsection:background-algebraic-logic}, we provide a background in Arithmetic Logic focusing on Gödel's Incompleteness Theorem and Algebraic Logic, respectively. Then, in Sections \ref{subsection:background-arrow} and Section \ref{subsection:condorcet-paradox} we provide a background in Social Choice Theory focusing on Arrow's Impossibility Theorem and Condorcet's Paradox, respectively.

\subsection{Gödel's Incompleteness Theorem}\label{subsection:background-godel}

Gödel's (First) Incompleteness Theorem states that no list of axioms for a logical theory of natural number arithmetic is both \term{consistent} and \term{complete}. Consistency means the theory entails no proof of a false statement, and completeness means that the theory entails a proof of every true statement. Examples of logical theories of natural number arithmetic include Peano Arithmetic and Robinson Arithmetic. \textbf{In this paper, we restrict our focus to theories axiomatised over classical logic}.

\parsplit

\term{Gödel numbering} is a construction instrumental to Gödel's proof. Gödel numbers encode logical statements about arithmetic, e.g., sentences such as \saymath{2 > 3} or predicates such as \saymath{x > 3}. Because Gödel numbers --- being numbers --- are thus part of Arithmetic Logic, statements about Gödel numbers may be interpreted as statements about statements of Arithmetic Logic. Gödel's Incompleteness Theorem exploits the existence of a statement that reasons about its own provability via its own Gödel number. For an example of Gödel Numbering, see Nagel and Newman~\cite{nagel-newman}.

\parsplit

In Arithmetic Logic, a natural numbers $n \in \N$ is represented by formulae called the \term{numeral} of $n$. A numeral is typically defined using a successor symbol $\textbf{S}$ and a zero numeral $\textbf{0}$, i.e., the numeral of $n$ is $\textbf{SS}\dots\textbf{S}\textbf{0}$ with $n$ copies of $\textbf{S}$. Given a formula $F$, we write $\Obar{F}$ for the numeral of $F$'s Gödel number $G(F)$, i.e., with $G(F)$ copies of $\textbf{S}$.

\parsplit

Many proofs of Gödel's Incompleteness Theorem leverage the following intermediate result known as \term{The Diagonalisation Lemma}, developed by Carnap shortly after Gödel's original proof~\cite{carnap}.
\parsplit
\begin{lemma}[The Diagonalisation Lemma]\label{lemma:diag-lemma}
    For any predicate $Q(x)$ in a theory of Arithmetic Logic $\Theory$ there exists a sentence $\C$ such that $Q(\Obar{C})$ and $C$ are logically equivalent, i.e.: $\Theory \vdash Q(\Obar{C}) \leftrightarrow C$.
\end{lemma}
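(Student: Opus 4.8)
The plan is to prove the lemma by the classical route of Carnap and Gödel: construct an explicit self-referential sentence $C$ using a \emph{diagonal substitution function} that is representable inside $\Theory$. First I would define $\diag\colon\N\to\N$ by letting $\diag(n)=G\big(\psi(\overline{n})\big)$ whenever $n=G(\psi)$ for a formula $\psi$ with a single free variable $x$ (here $\overline{n}$ denotes the numeral $\textbf{S}\dots\textbf{S}\textbf{0}$ with $n$ copies of $\textbf{S}$, so that $\Obar{\psi}=\overline{G(\psi)}$ in the paper's notation), and $\diag(n)=0$ otherwise. Since substitution of a numeral into a formula and the computation of Gödel numbers are effective operations, $\diag$ is primitive recursive. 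I would then invoke the standard representability fact for Arithmetic Logic --- every computable function is representable in a theory at least as strong as Robinson arithmetic --- to obtain a formula $\mathrm{Diag}(x,y)$ representing $\diag$ in $\Theory$; in particular $\Theory\vdash\forall y\,\big(\mathrm{Diag}(\overline{n},y)\leftrightarrow y=\overline{\diag(n)}\big)$ for every $n\in\N$.

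Given the target predicate $Q(x)$, the self-reference is produced by applying a suitable formula to a numeral of its own Gödel number. Concretely, I would set
\[
\psi(x)\;:=\;\exists y\,\big(\mathrm{Diag}(x,y)\wedge Q(y)\big),
\]
put $k:=G(\psi)$, and define the sentence $C:=\psi(\overline{k})$. The crucial observation is that $\diag(k)=G\big(\psi(\overline{k})\big)=G(C)$, so the representability clause specialises to $\Theory\vdash\forall y\,\big(\mathrm{Diag}(\overline{k},y)\leftrightarrow y=\Obar{C}\big)$. Substituting this equivalence into $C=\exists y\,(\mathrm{Diag}(\overline{k},y)\wedge Q(y))$ collapses the existential quantifier onto the single witness $y=\Obar{C}$, yielding $\Theory\vdash C\leftrightarrow Q(\Obar{C})$, which is exactly the claimed fixed point.

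The main obstacle --- and the only place requiring genuine care --- is the representability step, which is where the hypothesis that $\Theory$ is a theory of arithmetic (strong enough to represent recursive functions) is essential; without it the formula $\mathrm{Diag}$ need not exist and the argument fails. A secondary technical point is the bookkeeping identity $\diag(k)=G(C)$: one must check that the metatheoretic substitution defining $\diag$ matches the object-level formation of $C=\psi(\overline{k})$, and that the \emph{uniqueness} half of representability (not merely weak representability of the graph) is available, so that $\mathrm{Diag}(\overline{k},y)$ provably forces $y=\Obar{C}$. Everything else is routine first-order manipulation of the quantifier, so I would keep those steps brief.
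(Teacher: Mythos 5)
Your proof is correct, and it is the standard Carnap--G\"odel construction; but it is a genuinely different route from the one this paper takes. The paper never proves Lemma~\ref{lemma:diag-lemma} directly in the background section (it only cites Carnap); its own derivation comes later, by packaging the argument into the machinery of Embeddable Self-Reference Systems: Proposition~\ref{proposition:diagonalisation-lemma-instance} reduces the lemma to a fixed-point property of the system $(\NGodelvar,\app)$, Theorem~\ref{theorem:diag-abstract} (the Abstract Diagonalisation Lemma) shows that any embeddable system with a \emph{diagonaliser} $f_\diag$ satisfying $f_\diag \ast e = \DObar{e\ast e}$ has fixed points for every expression, and Theorem~\ref{theorem:arithmetic-logic-fixed-points} instantiates this by positing a diagonaliser predicate $diag(y)$ and taking the fixed point of $A(x)$ to be $D(\DObar{D(x)})$ where $D(x) = A(diag(y))$. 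Both arguments hinge on exactly the same non-trivial core --- that the diagonal substitution function is representable inside $\Theory$ --- but they handle it differently: the paper black-boxes this step (Note~\ref{note:internalisablility-caveat} explicitly defers the existence of $diag(x)$ to the literature), and in exchange gets an argument that transports to other Self-Reference Systems such as the Social Choice ones; you instead discharge it concretely via primitive recursiveness of $\diag$ and representability of recursive functions in theories extending Robinson arithmetic, using the graph formula $\mathrm{Diag}(x,y)$ and an existential quantifier where the paper writes $A(diag(y))$ as if $diag$ were a term. Your version is the more self-contained and careful one for the specific arithmetic case (in particular you correctly flag that the uniqueness half of representability is needed to collapse the quantifier); the paper's version buys generality at the cost of leaving the representability step to a citation. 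One small point of care if you were to align your proof with the paper's setting: the paper states the lemma over the Lindenbaum algebra $\L_1$ with the modified numbering $\NGodelvar$, so your bookkeeping identity $\diag(k)=G(C)$ would need the additional observation (Proposition~\ref{proposition:well-defined}) that $\NGodelvar$ remains computable and respects logical equivalence.
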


The key predicates of interest are derived from \term{provability predicates}, which are constructed by encoding proofs as Gödel numbers~\cite{nagel-newman}. Specifically, we are able to construct a predicate $Proof(y, x)$, which corresponds to the statement \say{$y$ is the Gödel number of a proof of a statement whose Gödel number is $x$}. Hence, the most basic provability predicate is $Provable(x) \coloneqq \exists y\ Proof(y, x)$. $Provable(x)$ has a negated form $\neg Provable(x)$, which corresponds to $\forall y:\ \neg Proof(y, x)$.

\parsplit

Applying the Diagonalisation Lemma to $\neg Provable(x)$ yields a sentence $\G$ that is logically equivalent to $\neg Provable(\Obar{\G})$. In other words, $\G$ is a sentence that appears to be \textit{true if and only if it is not provable}. The sentence $\G$ is typically called a \term{Gödel sentence} of the theory. In a theory of Arithmetic Logic, the mutual exclusivity of consistency and completeness (i.e., Gödel's Incompleteness Theorem) \textit{appears} to immediately follow from $\G$'s existence. However, this either requires refinement of the provability predicate (e.g., \say{Rosser's Trick} \cite{rosser}) or restricting to theories of Arithmetic Logic known as $\omega$-\term{consistent} theories as Gödel originally did and as we will in this paper. See~\cite[p.207]{kleene-metamathematics} for a definition of $\omega$-consistency. In this paper, we restrict to $\omega$-consistent theories $\Theory$ only to guarantee that for any proposition $Q(x)$: $\Theory \vdash Provable(\Obar{\neg Q(x)}) \rightarrow \neg Provable(\Obar{Q(x)})$.

\parsplit

\begin{theorem}[Gödel's (First) Incompleteness Theorem]\label{theorem:godel-classic}
    No $\omega$-consistent theory is complete.
\end{theorem}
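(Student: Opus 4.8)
The plan is to prove Gödel's First Incompleteness Theorem (Theorem \ref{theorem:godel-classic}) by deriving a contradiction from the assumption that an $\omega$-consistent theory $\Theory$ is complete, using the Gödel sentence $\G$ whose existence was established via the Diagonalisation Lemma (Lemma \ref{lemma:diag-lemma}). Recall that $\G$ satisfies $\Theory \vdash \neg Provable(\Obar{\G}) \leftrightarrow \G$. The strategy is to show that both $\Theory \vdash \G$ and $\Theory \vdash \neg \G$ lead to contradictions, so that $\G$ is a true-but-unprovable sentence witnessing incompleteness.

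First I would suppose, for contradiction, that $\Theory$ is complete, so that either $\Theory \vdash \G$ or $\Theory \vdash \neg \G$. In the first case, suppose $\Theory \vdash \G$. Then there is a proof of $\G$, so by the construction of the provability predicate we obtain $\Theory \vdash Provable(\Obar{\G})$. But from the Gödel equivalence $\Theory \vdash \G \leftrightarrow \neg Provable(\Obar{\G})$ together with $\Theory \vdash \G$, we derive $\Theory \vdash \neg Provable(\Obar{\G})$. This contradicts consistency (which is implied by $\omega$-consistency), since $\Theory$ would prove both $Provable(\Obar{\G})$ and its negation. Hence $\Theory \nvdash \G$.

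Next I would handle the case $\Theory \vdash \neg \G$. Using the Gödel equivalence, $\Theory \vdash \neg \G \leftrightarrow Provable(\Obar{\G})$, so $\Theory \vdash \neg \G$ yields $\Theory \vdash Provable(\Obar{\G})$. This is where the $\omega$-consistency hypothesis does its essential work: because we have just shown $\Theory \nvdash \G$, there is in fact no proof of $\G$, so $Proof(n, \Obar{\G})$ fails for every numeral $n$; $\omega$-consistency then blocks $\Theory$ from proving the existential claim $Provable(\Obar{\G}) = \exists y\, Proof(y, \Obar{\G})$. More precisely, I would invoke the property guaranteed in the excerpt for $\omega$-consistent theories, $\Theory \vdash Provable(\Obar{\neg Q}) \rightarrow \neg Provable(\Obar{Q})$, applied so as to convert provability of $\neg\G$ into the unprovability of $\G$, yielding a contradiction with $\Theory \vdash Provable(\Obar{\G})$. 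Thus $\Theory \nvdash \neg \G$ as well, and neither $\G$ nor $\neg\G$ is provable, contradicting completeness.

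The main obstacle is the asymmetry between the two cases: the first case needs only plain consistency, but the second genuinely requires $\omega$-consistency, and care must be taken to invoke it correctly rather than relying on simple consistency (this is precisely the subtlety that Rosser's Trick later circumvents). I expect the delicate step to be making rigorous the passage from ``$\G$ has no proof'' to ``$\Theory$ cannot prove $Provable(\Obar{\G})$,'' which is exactly the content of $\omega$-consistency as stated in the excerpt, namely the guarantee that $\Theory \vdash Provable(\Obar{\neg Q(x)}) \rightarrow \neg Provable(\Obar{Q(x)})$. Everything else reduces to propositional manipulation of the biconditional supplied by the Diagonalisation Lemma.
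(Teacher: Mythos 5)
Your proof is correct, but it takes a genuinely different route from the paper. For the statement as it appears in Section \ref{subsection:background-godel} the paper gives no proof at all (it defers to the literature), and when it proves the equivalent statement later (Theorem \ref{theorem:godel-main}) it works entirely inside the Lindenbaum algebra $\L_0$: starting from the fixed point $\G = \neg\Provable(\DObar{\G})$, it combines the inequality $\Provable(\DObar{D}) \leq \neg\Provable(\DObar{\neg D})$ supplied by $\omega$-consistency (Proposition \ref{proposition:provability-predicate}-3) with the identity $\neg\Provable(\DObar{D}) \wedge \neg\Provable(\DObar{\neg D}) = \bot$ supplied by completeness to squeeze $\G$ down to $\bot$, and then derives $\top = \bot$ using $\Provable(\DObar{\ \top\ }) = \top$. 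Your argument is instead the classical two-case proof: $\Theory \vdash \G$ is excluded by plain consistency plus the first derivability condition, and $\Theory \vdash \neg\G$ is excluded using $\omega$-consistency; you correctly identify where $\omega$-consistency is genuinely needed. Each approach buys something: yours is the more elementary, standard argument; the paper's algebraic version is deliberately cast as equations and inequalities in $\L_0$ precisely so that its individual steps can be abstracted into the quasi-consistency / quasi-completeness framework of Section \ref{subsection:argument}. One step to tighten in your second case: passing from ``$\G$ has no proof'' to ``$\Theory \vdash \neg Proof(\underline{n}, \Obar{\G})$ for every $n$'' (which is what $\omega$-consistency actually needs in order to block the existential) requires the numeralwise representability of the $Proof$ relation, which you gloss over. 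Your alternative finish via the packaged property $\Theory \vdash \Provable(\Obar{\neg Q}) \rightarrow \neg\Provable(\Obar{Q})$ avoids this entirely: from $\Theory \vdash \neg\G$ derive $\Theory \vdash \Provable(\Obar{\neg\G})$ by the derivability condition, hence $\Theory \vdash \neg\Provable(\Obar{\G})$, contradicting the $\Theory \vdash \Provable(\Obar{\G})$ obtained from the biconditional --- this is the cleaner way to close that case given the paper's setup.
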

\noindent See~\cite{smorynski,smullyan-godel,peter-smith-tarski} for comprehensive derivations and proofs of the theorem.

\subsection{Algebraic Logic}\label{subsection:background-algebraic-logic}

The formulation of Gödel's Incompleteness Theorem in our results utilises a construction on a logical theory known as its \term{Lindenbaum Algebra}. A Lindenbaum Algebra is a set of equivalence classes of logical formulae, where two formulae are equivalent if and only if they are logically equivalent. Reasoning about equality of elements in a Lindenbaum Algebras (in place of logical equivalence) allows us to leverage many tools from abstract algebra. This approach has been utilised in various incomputability proofs by Yanofsky~\cite{yanofsky}.

\parsplit

We begin by noting that for a logical theory $\mathcal{T}$ with symbols such as $\wedge$, $\vee$, $\neg$, propositional variables, free variables, etc., one can generate the set of all possible well-formed formulae of $\mathcal{T}$ using those symbols. We then define Lindenbaum Algebras on these formulae as follows.

\parsplit

\begin{definition}[Lindenbaum Algebras]\label{definition:lindenbaum-algebra-def}
    Given a logical theory $\mathcal{T}$ and $n \in \N$, we write $\F_n$ to denote the set of formulae with $0$ up to $n$ free variables. The \term{Lindenbaum Algebra} $\L_n$ of $\F_n$ is the set of equivalence classes of formulae in $\F_n$, where two formulae $f, g \in \F_n$ satisfy $f = g$ in $\L_n$ if and only if they have the same arity and are logically equivalent in the theory $\mathcal{T}$, meaning:
    \begin{align*}
         & \Theory \vdash C \leftrightarrow D                   &  & \text{for propositions $C, D \in \F_0$}     \\
         & \Theory \vdash \forall y:\ A(y) \leftrightarrow B(y) &  & \text{for predicates $A(x), B(x) \in \F_1$}
    \end{align*}
    etc. Where $\leftrightarrow$ is the \textit{if and only if} connective in the Theory $\Theory$.
\end{definition}

\parsplit

While logical equivalence can be expressed as equality on Lindenbaum Algebras $\L_n$ interpreted as mere sets, other aspects of logic correspond to order-theoretic and algebraic structures of Lindenbaum Algebras.

\parsplit

In terms of order theory, we observe that the set $\L_n$ ordered by the implication relation is a partial order, i.e., $f \leq g$ in $\L_n$ if and only if $\Theory \vdash f \rightarrow g$. Importantly, $\L_n$ has a bottom element $\bot \in \L_n$ or \textit{false}, which is logically equivalent to all contradictions such as $f \wedge \neg f$. The fact that $\bot$ is a bottom element, in other words, that a contradiction implies anything is known as the \textit{principle of explosion}. $\L_n$ also has a top element $\top \in \L_n$ or \textit{truth}, logically equivalent to all tautologies such as $f \vee \neg f$, and a top element because a tautology is true for any assumptions considered.

\parsplit

In terms of abstract algebra, we observe that applying logical connectives $\wedge$ (respectively $\vee$) to two (equivalence classes of) formulae corresponds to the operation of taking their greatest lower (respectively least-upper) bound in $\L_n$ with respect to implication. Likewise, negating a formula corresponds to taking its complement (in the order-theoretic sense) in $\L_n$. The combination of the set $\L_n$ and certain collections of these operations corresponds to well-known algebraic structures. For example, $(\L_n, \wedge)$ is a meet semi-lattice, and for classical logic, $(\L_n, \wedge, \vee, \neg, \bot, \top)$ is a Boolean algebra. The association of orders and algebras to different theories of logic in this way comprises a field known as Algebraic Logic.

\subsection{Social Choice Theory}\label{subsection:background-arrow}

\finestsection{Weak and Strict Orders}

Arrow's Impossibility Theorem concerns the aggregation of \term{weak orders}, i.e., transitive and complete relations. A canonical example of which is a preferential voting ballot, wherein an individual (vote) is a ranking of alternatives from most to least preferred. Weak orders permit tied rankings (i.e., \term{indifference}) between alternatives. We use the term \term{strict order} to refer to a weak order without indifference.

\parsplit

\noindent A weak order on a fixed set of alternatives $\A$ can be represented by relation symbols $\prec, \sim$ and $\preceq$ as follows:
\begin{itemize}
    \item $a \sim b$ for \term{indifference} between $a$ and $b$.
    \item $a \prec b$ for $a$ being \term{strictly preferred} to $b$  (i.e., $b \nprec a$ and $a \nsim b$).
    \item $a \preceq b$ for $a$ being \term{weakly preferred} to $b$, i.e., either $a \prec b$ or $a \sim b$ holds.
\end{itemize}

\noindent Conversely, the axioms for a weak order are correspondingly:
\begin{description}
    \item[\textbf{Transitivity:}] $\forall a,b,c \in \A$: if $a \preceq b$ and $b \preceq c$ then $a \preceq c$.
    \item[\textbf{Completeness:}] $\forall a,b \in \A$: one of $a \prec b$, $b \prec a$ or $a \sim b$ hold.
\end{description}

Moreover, weak orders may be written as a chain of the symbols $\prec$, $\sim$ and $\preceq$. For example, If $\A = \{a,b,c\}$, $a \prec b \sim c$ denotes the weak order consisting of $a \prec b$, $b \sim c$ and $a \prec c$ (by transitivity). Strict orders are chains consisting entirely of $\prec$, e.g., $c \prec a \prec b$ denotes the strict order consisting of $c \prec a$, $a \prec b$ and $c \prec b$.

\parsplit

\finestsection{Social Welfare Functions}

\noindent We conclude this section by informally summarising Arrow's Impossibility Theorem.

\parsplit

Given a fixed number $N \in \N$ of individuals, a \term{profile} is an $N$-tuple of weak orders. An example of a profile is an election, i.e., a tuple containing a single ballot from each individual. Note, each individual has a fixed index in the tuple across profiles. A \term{Social Welfare Function} is a function from a set of \textit{valid} profiles to a single aggregate weak order. Invalid profiles are those that would otherwise fail to aggregate to a weak order, e.g., by aggregating to a contradictory preference-cycle.

\parsplit

\begin{definition}\label{definition:fairness-conditions}
    A Social Welfare Function satisfies:
    \begin{itemize}
        \item \textbf{Unrestricted Domain}: If all profiles are valid with respect to it.

        \item \textbf{Unanimity}: If all individuals sharing a strict preference of $a$ over $b$ implies the aggregate does too.

        \item \textbf{Independence of Irrelevant Alternatives (IIA)}: The outcome of aggregation with respect to alternatives $a$ and $b$ only depends on the individual preferences with respect to $a$ and $b$.

        \item \textbf{Non-Dictatorship}: There is no individual such that irrespective of the profile, that individual's strict preferences are always present in the aggregate outcome. If this condition fails we say the Social Welfare Function has a \term{Dictator}\footnote{A Social Welfare Function can have at most one Dictator or else multiple Dictators could not disagree with one another.}.
    \end{itemize}
\end{definition}

\begin{theorem}[Arrow's Impossibility Theorem]
    If a Social Welfare Function on at least 3 alternatives and 2 individuals satisfies Unrestricted Domain, Unanimity and IIA then it must have a Dictator.
\end{theorem}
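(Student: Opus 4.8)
The plan is to prove Arrow's Impossibility Theorem via the classical \emph{pivotal voter} (or \say{decisive coalition}) argument, which is the most direct route given only the four conditions stated in Definition~\ref{definition:fairness-conditions} together with the weak-order axioms. First I would fix three distinct alternatives $a, b, c \in \A$ (permissible since $|\A| \geq 3$) and $N \geq 2$ individuals, and introduce the central notion of a \emph{decisive set}: a set $S$ of individuals is decisive for the ordered pair $(x, y)$ if, whenever every member of $S$ strictly prefers $x$ to $y$, the aggregate outcome has $x \prec y$ regardless of how the individuals outside $S$ vote. Unanimity says the full set of $N$ individuals is decisive for every ordered pair, so at least one decisive set exists; the goal is to shrink a decisive set down to a singleton, which will be the Dictator.

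The key technical step is the \emph{Field Expansion} (or \say{contagion}) lemma: if a set $S$ is decisive for some single pair $(x,y)$, then $S$ is in fact decisive for \emph{every} ordered pair of alternatives. I would prove this by constructing profiles where members of $S$ are made to prefer $x \prec y$ while a third alternative $z$ is positioned so that IIA and Unanimity, applied pairwise across $\{x,y,z\}$ and chained by Transitivity of the aggregate weak order, force the desired aggregate comparison on an arbitrary target pair. This is where having at least three alternatives is essential, since the argument routes the comparison through the \say{irrelevant} third alternative. Once established, I would invoke the \emph{Group Contraction} lemma: take a minimal decisive set $S$ (minimal under the existence guaranteed by Unanimity), assume for contradiction $|S| \geq 2$, split $S$ into a nonempty singleton $\{i\}$ and the nonempty remainder $S \setminus \{i\}$, and design a profile on $\{a,b,c\}$ that pits these subgroups against each other. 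Analysing the aggregate comparisons using IIA and the decisiveness of $S$ would force either $\{i\}$ or $S \setminus \{i\}$ to be decisive for some pair, contradicting the minimality of $S$.

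Combining the two lemmas yields a minimal decisive set that is necessarily a singleton $\{i\}$, and Field Expansion upgrades its decisiveness for one pair to decisiveness for all ordered pairs; this individual $i$ then satisfies exactly the definition of a Dictator in Definition~\ref{definition:fairness-conditions}, namely that $i$'s strict preferences are always reflected in the aggregate. Throughout, I would lean on Unrestricted Domain to guarantee that every auxiliary profile I construct is valid (hence in the Social Welfare Function's domain), on IIA to confine attention to the restriction of each profile to the relevant pair, and on the aggregate being a genuine weak order so that Transitivity and Completeness can be applied to its output.

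The main obstacle I anticipate is the careful bookkeeping in the Group Contraction step: one must arrange the preferences of the two subgroups over all three alternatives so that the forced aggregate comparisons genuinely pin decisiveness onto a strictly smaller set, and it is easy to construct a profile that only yields a weaker conclusion (e.g.\ decisiveness in the \say{weak} rather than \say{strict} sense). Handling the distinction between strict preference $\prec$ and weak preference $\preceq$ correctly --- so that the contradiction with minimality is airtight --- is the delicate part, and I would treat it by explicitly tracking which aggregate comparisons are forced to be strict versus merely weakly constrained at each stage.
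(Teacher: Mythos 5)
Your proposed route --- decisive coalitions, a Field Expansion lemma, and a Group Contraction lemma --- is the standard combinatorial proof of Arrow's theorem, and as a strategy it is sound: the two lemmas, chained together, yield a singleton set that is decisive for every ordered pair, which is exactly a Dictator in the sense of Definition~\ref{definition:fairness-conditions}. Note, however, that the paper does not prove this statement at all; it is quoted as background and the reader is referred to precisely this style of argument in \cite{geanakoplos,arrow-one-shot}, so your plan reproduces what the paper delegates to its references rather than anything the paper derives. Where the paper genuinely takes a different route is in the version of Arrovian impossibility it actually proves and uses downstream (Theorem~\ref{theorem:arrow-full}): there the codomain of the Social Welfare Function is extended by a single element $\c$ standing for all contradictory preference cycles, and Unanimity, IIA and Non-Dictatorship are shown to force the existence of mutually inconsistent profiles $q, q'$ with $\Welfare(q) = \Welfare(q') = \c$, following \cite{dantoni,paper0-arxiv}. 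That reformulation is what gives the paper its bridge to the Self-Reference System machinery (consistency-respecting expressions, quasi-G\"odel sentences), which the classical decisive-coalition argument does not provide; your approach instead buys a self-contained, elementary proof of the theorem as literally stated. The one caveat on your side is that the proposal is a plan rather than a finished proof: the specific three-alternative profiles needed for Field Expansion and Group Contraction are not exhibited, and, as you yourself flag, the bookkeeping of strict versus weak preference when indifference is permitted is exactly where such proofs most often break --- so the skeleton is correct and completable, but the delicate combinatorial content remains to be supplied.
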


\noindent For examples of standard (combinatorial) proofs of Arrow's Impossibility Theorem see~\cite{geanakoplos,arrow-one-shot}.

\parsplit

Dual to a Social Welfare Function having a dictator at individual $i$ is the Social Welfare Function having a \term{vetoer} at $i$, defined as follows~\cite{blau-veto}.
\parsplit
\begin{definition}\label{definition:vetoer}
    A Social Welfare Function has a \term{vetoer} if there is an individual $i$ such that irrespective of the profile, that individual's strict preferences are never contradicted in the aggregate outcome. In other words, if in a profile, individual $i$ contains a strict preference $a \prec b$ then the aggregate weak order never contains the strict preference $b \prec a$.
\end{definition}

\noindent We combine these two conditions as follows:

\parsplit

\begin{definition}\label{definition:strong-dictator}
    A Social Welfare Function has a \term{strong dictator} if and only if there is an individual $i$ that is both a dictator and a vetoer. In other words the aggregate has a strict preference $a \prec b$ if and only if the individual $i$ does.
\end{definition}

\subsection{Condorcet's Paradox}\label{subsection:condorcet-paradox}

Condorcet's Paradox refers to phenomena where a voting system on 3 or more alternatives cannot guarantee winners that are always preferred by a majority of voters. A canonical example of a Condorcet Paradox is the observation that for the profile specified by Table \ref{table:condorcet-plain}, pairwise majority voting cannot decide a winner lest it aggregates to a contradictory preference cycle. In other words, to aggregate that profile to a weak order, there must be an aggregate preference $x \prec y$ that is only shared by a minority of individuals.

\parsplit

\begin{table}[ht!]
    \centering
    \begin{tabularx}{0.95\textwidth}{|p{1.8in}|X|X|X|}
        \hline
        \diagbox{Ranking}{Individual} & \textbf{1} & \textbf{2} & \textbf{3} \\\hline
        \textbf{1}                    & a          & b          & c          \\\hline
        \textbf{2}                    & b          & c          & a          \\\hline
        \textbf{3}                    & c          & a          & b          \\\hline
    \end{tabularx}
    \caption{A Profile on 3 voters and 3 alternatives $\{a,b,c\}$ that under pairwise majority voting, aggregates to a contradictory preference cycle.}
    \label{table:condorcet-plain}
\end{table}

To see this, consider three individuals voting on 3 alternatives $\{a,b,c\}$, and consider pairwise majority voting as our Social Welfare Function. Pairwise majority voting is defined by ranking alternatives $x \prec y$ if more voters strictly prefer $x$ to $y$ than $y$ to $x$, and $x \sim y$ if there is a tie. If we apply this rule to the profile defined by Table \ref{table:condorcet-plain}, we find that the majority of individuals strictly prefer $a$ to $b$ (individuals 1 and 3) as well as $b$ to $c$ (individuals 1 and 2), and $c$ to $a$ (individuals 2 and 3). Thus, aggregation yields a preference cycle $a \prec b \prec c \prec a$, which is contradictory given the requirement that aggregated preferences are transitive.

\parsplit

It is a simple exercise to verify pairwise majority voting satisfies Unanimity, IIA and Non-Dictatorship, but as we have seen, may violate Unrestricted Domain. In fact, all Social Welfare Functions satisfying the same constraints conditions violate Unrestricted Domain due to the existence of profiles that aggregate to preference cycles. In other words, Arrow's Impossibility Theorem is a generalisation of Condorcet's Paradox.

\parsplit

\noindent This result was first proven by D'Antoni in the special case, where all preferences are strict~\cite{dantoni}, Livson and Prokopenko later generalised D'Antoni's methodology to prove Arrow's Impossibility Theorem in full as a generalisation of Condorcet's Paradox~\cite{paper0-arxiv}. In both cases the methodology centres around extending the codomain of Social Welfare Functions to a set where each element is either a weak order or a preference cycle. This is achieved with ternary data, where for alternatives $a$ and $b$: the value 0 corresponds to to $a \prec b$, 1 for $b \prec a$ and a third value $\indiff$ for indifference: $a \sim b$. For example, for alternatives $a,b,c$ ordered as such, the tuple $(\indiff, 0, 1)$ corresponds to the weak order $a \sim b \prec c$, and the tuples $(0,0,0)$, $(1,1,1)$ and $(\indiff, 1, \indiff)$ correspond to the preference cycles $a \prec b \prec c \prec a$, $c \prec b \prec a \prec c$ and $c \prec a \sim b \sim c$, respectively. When preference-cycles arise under the assumption of transitivity, they are called \term{contradictory preference cycles} for emphasis.

\section{Results}\label{section:results}

In this section, we develop our general theory. For every new general definition or result, we provide a corresponding instantiation to both Arithmetic Logic and Social Choice Theory.

\subsection{\subsectiontexta}\label{subsection:encodings}

A core component of our general theory of \term{Self-Reference Systems} is an encoding function $\enc: \E \rightarrow \C$ from a set of \term{expressions} to a set of \term{constants}. In this section, we define encodings in the fields of Arithmetic Logic and Social Choice Theory that will be used throughout the remainder of this paper. See Appendix \ref{appendix:guide} for a table summarising the contents of each result section.

\parsplit

\finestsection{Arithmetic Logic}

\noindent We first note that for this paper's purposes, it suffices to reason about formulae with 0 free variables (propositions) or 1 free variable (predicates). Then, we recall that given a set $\F_n$ of formulae with 0 up to $n$ free variables, the Lindenbaum Algebra $\L_n$ is the set of equivalence classes of formulae in $\F_n$ with respect to logical equivalence in the theory (Definition \ref{definition:lindenbaum-algebra-def}). Given any Gödel Numbering on formulae $G: \F_1 \rightarrow \N$, we can use $G$ to define a Gödel Numbering $\NGodelvar: \L_1 \rightarrow \N$, i.e., on equivalence classes of formulae.

\parsplit

\begin{definition}\label{definition:enc}
    Given a Gödel Numbering $G: \F_1 \rightarrow \N$, we define $\NGodelvar: \L_1 \rightarrow \N$ by mapping the equivalence class of a formula $f$ to $G(f')$, where $f'$ is the shortest formula among those logically equivalent to $f$.

    \parsplit

    Analogously to the shorthand $\Obar{f} \coloneqq \underline{G(f)}$, i.e., the numeral of $G(f)$ (see Definition \ref{definition:lindenbaum-algebra-def}), we introduce $\DObar{f} \coloneqq \underline{\NGodel{f}} = \underline{G(f')} = \Obar{f'}$, i.e., the Gödel numeral of the shortest formula $f'$ logically equivalent to $f$.
\end{definition}

\parsplit

\noindent Importantly, $\NGodelvar$ is well-defined in the following sense:

\parsplit

\begin{restatable}{proposition}{godelnumbering}\label{proposition:well-defined}
    The function $\NGodelvar: \L_1 \rightarrow \N$ defined as in Definition \ref{definition:enc} satisfies:
    \begin{enumerate}
        \item $\forall f, g \in \F_1$: $f$ and $g$ are logically equivalent if and only if $\NGodel{f} = \NGodel{g}$.
        \item If $f, g \in \F_1$ are logically equivalent then for any predicate $B(x)$ so are $B(\DObar{f})$ and $B(\DObar{g})$
        \item $\NGodelvar$ is computable function when $G$ is.
    \end{enumerate}
\end{restatable}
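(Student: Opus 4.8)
The plan is to derive all three parts from two facts: that the Gödel numbering $G$ is injective on $\F_1$, and that the selected representative $f'$ depends only on the equivalence class of $f$. Since ``the shortest formula logically equivalent to $f$'' is ambiguous when several formulae of minimal length are equivalent to $f$, I would first fix a deterministic selection rule — among all shortest formulae equivalent to $f$, take the one of least Gödel number — so that $f \mapsto f'$ is a genuine function. The key observation is then that the set $\{h \in \F_1 : h \text{ is logically equivalent to } f\}$ is precisely the equivalence class of $f$ in $\L_1$, so two formulae in the same class necessarily yield the same selected representative.

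For Part 1 the forward direction is immediate: if $f$ and $g$ are logically equivalent then their classes in $\L_1$ coincide, the associated sets of equivalent formulae coincide, and the selection rule returns the same $f' = g'$, giving $\NGodel{f} = G(f') = G(g') = \NGodel{g}$. For the converse I would invoke injectivity of $G$: from $\NGodel{f} = \NGodel{g}$, i.e.\ $G(f') = G(g')$, injectivity gives $f' = g'$ as identical strings (and hence of equal arity); since $f$ is logically equivalent to $f'$ and $g$ to $g'$, symmetry and transitivity of logical equivalence yield that $f$ and $g$ are logically equivalent. This also settles the cross-arity case automatically, as distinct arities force distinct representatives and thus distinct Gödel numbers.

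Part 2 is then a direct corollary. If $f$ and $g$ are logically equivalent, Part 1 gives $\NGodel{f} = \NGodel{g}$ as natural numbers, so the numerals $\DObar{f} = \underline{\NGodel{f}}$ and $\DObar{g} = \underline{\NGodel{g}}$ are the identical string of $\textbf{S}$'s applied to $\textbf{0}$. Hence $B(\DObar{f})$ and $B(\DObar{g})$ are literally the same formula for any predicate $B(x)$, and are trivially logically equivalent. In the write-up I would stress that this is exactly the property that fails for the plain numbering $\Obar{-}$, where $\Obar{f}$ and $\Obar{g}$ may differ for equivalent $f,g$ and so $B(\Obar{f})$, $B(\Obar{g})$ need not be equivalent; this is the motivation for passing to $\NGodelvar$.

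Part 3 is where I expect the genuine difficulty. The natural algorithm is enumerate-and-test: enumerate $\F_1$ in order of increasing length (ties broken by Gödel number) and, for each candidate $h$, search for a proof of $\Theory \vdash f \leftrightarrow h$, outputting $G(f')$ for the first equivalent $h$. The obstacle is that this requires \emph{deciding} logical equivalence, i.e.\ deciding $\Theory \vdash f \leftrightarrow h$, which for the arithmetic theories of interest is only semi-decidable, so the search may hang on a non-equivalent candidate. I would resolve this by reading computability of $\NGodelvar$ relative to the canonical representation of $\L_1$ in which each class is presented by its shortest member $f'$: on such inputs $\NGodel{\cdot}$ is simply $G(\cdot)$, so $\NGodelvar$ coincides with the restriction of $G$ and is computable whenever $G$ is, with no equivalence test performed. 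I would flag explicitly that computability from an \emph{arbitrary} representative would additionally demand decidability of provable equivalence, so that the identification of the correct representation — rather than any calculation — is the crux of this part.
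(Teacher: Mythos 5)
Your Parts 1 and 2 are essentially the paper's own argument: everything reduces to the fact that logically equivalent formulae select the same shortest representative $f'$, so $\DObar{f}$ and $\DObar{g}$ are the identical numeral and $B(\DObar{f})$, $B(\DObar{g})$ are literally the same formula; your explicit tie-breaking rule and the appeal to injectivity of $G$ for the converse of Part 1 only make precise what the paper leaves implicit. Part 3 is where you genuinely diverge, and your suspicion is well founded. The paper's proof establishes only that the search space is finite (modulo a fixed enumeration of variable names there are finitely many formulae no longer than $f$ to ``check''), and is silent on how any individual check $\Theory \vdash f \leftrightarrow h$ is to be performed; as you observe, for the recursively axiomatised arithmetic theories in play this is only semi-decidable, so the finite search need not terminate with a verdict. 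In fact the strong reading of Part 3 is false: if $\NGodelvar$ were computable from an arbitrary representative, then by Part 1 one could decide $\Theory \vdash f$ by testing whether $\NGodel{f} = \NGodel{\top}$, contradicting the essential undecidability of such theories. Your relativisation to the canonical shortest-member presentation of $\L_1$, on which $\NGodelvar$ is just the restriction of $G$, is therefore the only reading under which the claim holds --- but it is deflationary, and you should flag that the paper later invokes computability of $\NGodelvar$ on non-canonical inputs (Note \ref{note:internalisablility-caveat}, where the formula $B(\DObar{B(x)})$ is generally not the shortest member of its class), so the gap you identified propagates downstream rather than disappearing.
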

\begin{proof}
    See Appendix \ref{appendix:godel-numbering}.\\
\end{proof}

\parsplit

\finestsection{Social Choice Theory}

\noindent In Section \ref{subsection:condorcet-paradox} we discussed D'Antoni~\cite{dantoni}, and Livson and Prokopenko's~\cite{paper0-arxiv}'s work that demonstrates that all Social Welfare Functions satisfying the constraints of Arrow's Impossibility Theorem other than Unrestricted Domain, necessarily fails to satisfy it lest a profile exists that aggregates to a contradictory preference cycle. Moreover, this is shown by studying Social Welfare Functions with an extended codomain, where each element is either a weak order or a preference cycle.

\parsplit

For the purposes of this paper, we argue that because transitivity of preferences is assumed in the Arrovian framework --- so that all preference cycles are contradictory --- all such contradictory preference cycles are in fact equivalent in the following manner that is analogous to all contradictions being equivalent in Algebraic Logic.

\parsplit

Preference cycles such as $a_1 \prec a_2 \prec a_3 \prec a_1$, combined with transitivity of preferences are contradictory because the combination of the two implies that all strict preferences $a_n \prec a_m$ and $a_m \prec a_n$ simultaneously hold. Analogously, in $\L_0$, the proposition \textit{false} (i.e., $\bot$) is logically equivalent to all contradictions such as $C \wedge \neg C$. Thus, we first formalise contradictory preference cycles as all being equivalent, and then as a bottom element in an Algebraic Logic-like ordering.

\parsplit

\begin{definition}\label{definition:complete-condorcet-paradox}
    Let $\A$ be a fixed set of alternatives and $N \geq 2$ individuals. Denoting $\P$ as the set of weak orders and $\PN$ as the set of $N$-individual profiles on $\P$, an \term{(extended) Social Welfare Function} is a function $\PN \rightarrow \PC$ where $\PC \coloneqq \P \cup \{\c\}$ for an element $\c \notin \P$.

    \parsplit

    The element $\c$, representing all equivalent, contradictory preference-cycles on $\A$, will be referred to as \term{the contradictory preference cycle}. We call elements of $\PC$ \term{preference relations}. I.e.,
    \begin{equation*}
        \text{Preference Relations} = \text{Weak Orders} + \text{The Contradictory Preference Cycle}
    \end{equation*}
\end{definition}

To realise $\c$ as a bottom element of an appropriate ordering of $\P$, we note that $\P$ can be ordered by a strictness relation. Specifically, a weak order $r$ is stricter than a weak order $s$ (denoted $r \leq s$) if $r$ has at least all the strict preferences of $s$, i.e., for any alternatives $a, b \in \A$: $a \prec b$ (i.e., strictly) in $s \implies a \prec b$ in $r$. Note, the partial order $(\P, \leq)$ has a top element given by the weak order indifferent on all alternatives, denoted $\i \in \P$. For example, for $\A = \{a,b,c\}$: $\i$ corresponds to $a \sim b \sim c$. Then, we extend $\P$ to include the contradictory preference cycle as follows.

\parsplit
\begin{definition}\label{definition:extending-order}
    Given the strictness ordering $(\P, \leq)$ on the set $\P$ weak orders on alternatives $\A$. We extend $\leq$ to $\PC \coloneqq \P \cup \{\c\}$ by adding the minimal number of relations to satisfy $\forall r \in \PC$: $\c \leq r$, i.e., for $\c$ to be the bottom element of $(\PC, \leq)$.
\end{definition}

The constraints of Arrow's Impossibility Theorem can now be defined in terms of Social Welfare Functions defined as functions $\Welfare: \PN \rightarrow \PC$.

\parsplit

\begin{definition}
    A Social Welfare Function $\Welfare: \PN \rightarrow \PC$ satisfies \term{Unrestricted Domain} holds if and only if $im(w) = \P$.
\end{definition}

\parsplit

\begin{definition}\label{definition:dictator-generalised}
    A Social Welfare Function $\Welfare: \PN \rightarrow \PC$ \term{has a dictator at (individual) $i$} if and only if $\forall p \in \PN$:
    \begin{enumerate}
        \item $\Welfare(p) = \c \implies p_i = \i$
        \item If $p_i \neq \i$ then $\Welfare(p) \leq p_i$
    \end{enumerate}
\end{definition}
In other words, if a Social Welfare Function has a dictator at $i$ then as long as individual $i$ has any strict preferences, not only must the aggregate outcome not contradict the dictator, the aggregate outcome must not be a contradictory preference cycle.

\parsplit

These definitions correspond to those given in \cite{paper0-arxiv,dantoni} with the identification of all preference cycles as a single element $\c$. These works also have corresponding definitions for IIA and Unanimity for extended Social Welfare Functions, although we do not explicitly use them in this paper.

\parsplit

\noindent Returning to the original task of defining an encoding function to instantiate our general theory to Social Choice Theory, we proceed with functions $\Welfare: \PCN \rightarrow \PC$.

\begin{note}\label{note:welfare-abuse}
    We will refer to every such function $\Welfare$ as a Social Welfare Function when restricting its domain from $\Welfare$ to $\PN$ --- allowing any behaviour of $\Welfare$ outside of $\PN$.
\end{note}

\parsplit

To conclude this section, we define additional operations on $\PC$ that are used to establish impossibility in Social Choice Theory, they are more formally defined using relational algebra in Propositions \ref{proposition:bounds} and \ref{proposition:missing-bounds}.

\parsplit

\noindent Firstly, we observe for any two weak orders $r, s \in \P$, there always exists a least upper bound $r \vee s$, which is the strictest weak order that is no stricter than either of $r$ and $s$. The least upper bound $\vee$ always exists over $\PC$ too as $\c \vee r = r \vee \c = r$ for every $r \in \PC$ recalling that $\c$ is the bottom element of $\PC$ by definition.

\parsplit

\begin{example}
    If $r$ represents $a \prec b \prec c$, and $s$ represents $b \prec a \prec c$ then $r \vee s$ represents $a \sim b \prec c$.
\end{example}

\parsplit

Dually, for any two preference relations $r, s \in \PC$, there always exists a greatest lower bound $r \wedge s$, which is the least strict preference relation, stricter than both of $r$ and $s$.

\parsplit

\begin{example}
    If $r$ represents $a \sim b \prec c$ and $s$ represents $a \prec b \sim c$ then $r \wedge s$ represents $a \prec b \prec c$.
\end{example}

\parsplit

Logically, it is also useful to think of the following \term{join} of two preference relations, which is more specifically the strictest weak order that is \textit{compatible} with all information in two underlying preference relations in the following sense:

\parsplit

\begin{definition}\label{definition:veebar}
    Given $\PC$ as in Definition \ref{definition:extending-order}, we define the \term{join} binary operation $\join: \PC \times \PC \rightarrow \PC$ by mappings:
    \begin{equation}
        r \join s = \begin{cases}
            r \vee s & \text{If $r \in \P$ and $s \in \P$}                           \\
            \i       & \text{Otherwise, i.e., if $r = \c$ or $s = \c$, exclusively.}
        \end{cases}
    \end{equation}
\end{definition}

\noindent Intuitively, $\forall r \in \PC$: $r + \c = \i$ because $\c$ \textit{containing} all strict preferences $a_n \prec a_m$ means that the only valid preference relation (i.e., weak order) that can be compatible with $\c$ is $\i$.

\parsplit

This definition of $\join$ has an important correspondence with our generalised definition of dictators in Definition \ref{definition:dictator-generalised}.
\begin{restatable}{proposition}{dictatorequivalence}
    \label{proposition:dictator-equivalence}
    A Social Welfare Function $\Welfare: \PN \rightarrow \PC$ has a dictator at $i$ if and only if $\forall p \in \PN$: $\Welfare(p) \join p_i = p_i$.
\end{restatable}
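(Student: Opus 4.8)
The plan is to prove both implications by a single case analysis on the value of $\Welfare(p)$, since the definition of $\join$ (Definition \ref{definition:veebar}) branches on precisely whether $\Welfare(p) = \c$ or $\Welfare(p) \in \P$. The one order-theoretic fact I would rely on is that, whenever a least upper bound $x \vee y$ exists, $x \vee y = y$ holds if and only if $x \leq y$; this converts the equation in the statement into the inequalities of Definition \ref{definition:dictator-generalised}. Note also that for $p \in \PN$ each coordinate $p_i$ is a weak order, so $p_i \in \P$ throughout.

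First I would reformulate the condition $\Welfare(p) \join p_i = p_i$ coordinate-wise. If $\Welfare(p) = \c$, then since $p_i \in \P$ we fall into the second branch of Definition \ref{definition:veebar}, giving $\Welfare(p) \join p_i = \i$, so the equation becomes $p_i = \i$. If instead $\Welfare(p) \in \P$, both arguments lie in $\P$ and $\Welfare(p) \join p_i = \Welfare(p) \vee p_i$, so by the lattice identity the equation becomes $\Welfare(p) \leq p_i$. This yields a clean reformulation: the equation holds for $p$ exactly when either $\Welfare(p) = \c$ and $p_i = \i$, or $\Welfare(p) \in \P$ and $\Welfare(p) \leq p_i$.

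For the forward direction, assuming a dictator at $i$, I would check each $p$ against this reformulation. When $\Welfare(p) = \c$, condition (1) of Definition \ref{definition:dictator-generalised} gives $p_i = \i$. When $\Welfare(p) \in \P$, I split on $p_i$: if $p_i \neq \i$, condition (2) gives $\Welfare(p) \leq p_i$; if $p_i = \i$, then $\Welfare(p) \leq \i$ holds automatically because $\i$ is the top element of $(\PC, \leq)$. Conversely, assuming the equation holds for all $p$, condition (1) follows directly from the $\Welfare(p) = \c$ case of the reformulation, and condition (2) follows from the $\Welfare(p) \in \P$ case, observing that when $p_i \neq \i$ the reformulation rules out $\Welfare(p) = \c$ and hence forces $\Welfare(p) \leq p_i$.

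The argument is mostly careful bookkeeping; the only delicate point is reconciling the unconditional inequality $\Welfare(p) \leq p_i$ produced by the equation (whenever $\Welfare(p) \in \P$) with the fact that Definition \ref{definition:dictator-generalised}'s condition (2) asserts this inequality only under the hypothesis $p_i \neq \i$. The resolution is that $\i$ is the top element of the strictness order, so the inequality is vacuously true when $p_i = \i$; this is the one place where the specific structure of $(\PC, \leq)$ --- top element $\i$ and bottom element $\c$ --- must be invoked beyond generic lattice reasoning.
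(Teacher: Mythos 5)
Your proposal is correct and takes essentially the same approach as the paper's proof: a direct case analysis on whether $\Welfare(p) = \c$ or $\Welfare(p) \in \P$, using the definition of $\join$, the semilattice identity $x \vee y = y \iff x \leq y$, and the fact that $\i$ is the top element of $(\PC, \leq)$. The only difference is organisational --- you reformulate the equation once up front and the paper's forward direction cases on $p_i = \i$ versus $p_i \neq \i$ instead of on $\Welfare(p)$ --- but the mathematical content is identical.
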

\begin{proof}
    See Appendix \ref{appendix:result-seciton-proofs}.\\
\end{proof}
\noindent Dually, $\forall p \in \PN$: $\Welfare(p) \wedge p_i = p_i$ if and only if $i$ is a vetoer.

\parsplit

Because $\vee$ and $\wedge$ compute least upper bounds and greatest lower bounds respectively, they are meet and join semi lattices that satisfy $r \leq s \iff r \vee s = s \iff r \wedge s = r$~\cite{lattice-order-theory}.  It is a simple exercise to verify $\PC$ is an orthocomplemented lattice with negation defined as follows:

\begin{definition}
    Negation $\neg: \PC \rightarrow \PC$ is defined by the mappings $\neg c = \i$ and $\neg i = \c$, and for any other weak order $r$, $\neg r$ is the weak order with $r$'s strict preferences flipped.
\end{definition}

\begin{example}
    If $r$ represents $a \sim b \prec c$ then $\neg r$ represents $c \prec a \sim b$.
\end{example}

\subsection{\subsectiontextb}\label{subsection:first-def}

In this section, we define the fundamental object of our general theory: Self-Reference Systems and provide examples of Self Reference Systems in Arithmetic Logic and Social Choice Theory used in results throughout.

\parsplit
\begin{definition}\label{definition:self-reference-system}
    A \term{Self-Reference System} $(\enc, \app)$ is a combination of:
    \begin{itemize}
        \item A set $\C$ of \term{constants}
        \item A set $\E$ of \term{expressions}
        \item An \term{encoding function} $\enc: \E \rightarrow \C$
        \item An \term{application function} $\app: \E \times \C \rightarrow \E$
    \end{itemize}
\end{definition}

\begin{note}\label{note:ast-shorthand}
    To reduce bracketing, we introduce a binary operation $\ast$ on $\E$ defined by $e \ast f \coloneqq \app(e, \enc(f))$.
\end{note}

\parsplit

\noindent The following examples Self-Reference Systems we will motivate our use of the phrase \say{Self-Reference}. In short, Self-Reference typically arises out of applying expressions to their own encoding, i.e., expressions of the form $e \ast e$.

\parsplit

That said, the phrase \say{Self-Reference} is used primarily for motivation; there are of course, trivial examples of Self-Reference Systems, e.g., setting $\C = \E$, the identity function $id_\E$ for encoding and the left projection $\pi_0$ for application.

\parsplit
\begin{example}[Self-Reference Systems in Arithmetic Logic]\label{example:godel-1}
    For all Arithmetic Logic examples in this paper, fixing a theory of Arithmetic Logic $\Theory$ and a Gödel Numbering, we define a Self-Reference System $(\NGodelvar, \app)$ by taking:
    \begin{itemize}
        \item The Natural Numbers $\N$ for constants.
        \item The Lindenbaum Algebra $\L_1$ for expressions (i.e., predicates and propositions, see Definition \ref{definition:lindenbaum-algebra-def}).
        \item Encoding $\NGodelvar: \L_1 \rightarrow \N$ mapping (the equivalence classes of a) formula to the Gödel numeral of the shortest equivalent formula (see Definition \ref{definition:enc}).
        \item Application $\app: \L_1 \times \N \rightarrow \L_1$ defined by:
              \begin{align*}
                  \app(B(x) & , n) \coloneqq B(\underline{n}) &  & \text{For predicates $B(x) \in \L_1$ and numeral $\underline{n} \in \L_0$} \\
                  \app(D    & , m) \coloneqq D                &  & \text{Otherwise, i.e., for propositions $D \in \L_0 \subset \L_1$}
              \end{align*}
    \end{itemize}
    In this example, the application function $\app$ is analogous to substitution.
    In terms of self-reference, for any predicate $B(x) \in \L_1$, the formula $B(x) \ast B(x) = \app(B(x), \NGodel{B(x)}) = B(\DObar{B(x)})$ can be considered self-referential. This is because the predicate $B(\DObar{B(x)}) = B'(\Obar{B'(x)})$, for $B'(x)$ the shortest formula logically equivalent to $B(x)$, which thus \textit{refers} to its own Gödel numeral.
\end{example}

\parsplit
\begin{example}[Self-Reference Systems in Social Choice Theory]\label{example:arrow-1}
    Recall the set $\PC$ defined as the set of weak orders on a fixed set of alternatives and the contradictory preference cycle (see Definition \ref{definition:extending-order}). Fixing an individual $i$, we define three different Self-Reference Systems $(\Welfare, \join_i)$, $(\Welfare, \wedge_i)$ and $(\Welfare, \Omega_i)$ all with:
    \begin{itemize}
        \item Individual preference relations: $\PC$ for constants.
        \item Profiles (i.e., tuples) of $N$ preference relations: $\PCN$ for expressions.
        \item A (Social Welfare) function $\Welfare: \PCN \rightarrow \PC$ for encoding (see Note \ref{note:welfare-abuse}).
    \end{itemize}
    The first Self-Reference System $(\Welfare, \join_i)$ has application function:
    \begin{align*}
        \join_i(\ (p_1, \dots, p_i, \dots, p_N),\ r\ ) & \coloneqq (p_1, \dots, p_i \join r, \dots, p_N)
    \end{align*}
    (see Definition \ref{definition:veebar}). In terms of self-reference, consider expressions $p \ast p$, which at coordinate $i$ combines $p_i$ (individual $i$) with the encoding (aggregate) $\Welfare(p)$, i.e., $p_i \join \Welfare(p)$. We are interested in cases where there is a coupling between group preferences and an individual's preference, despite $\Welfare(p)$ being completely determined by $p$. For example, by Proposition \ref{proposition:dictator-equivalence} there is a dictator at $i$ when the following is always satisfied:
    \begin{equation*}
        \Welfare(p_1, \dots, p_i, \dots, p_N) \leq p_i \quad \text{or equivalently} \quad p_i \join \Welfare(p_1, \dots, p_i, \dots, p_N) = p_i
    \end{equation*}

    Here, the presence of $p_i$ on both sides represents the coupling between \textit{expression} and \textit{encoding}, which can be illustrated by telescoping at the $i$-th coordinate in a self-referential fashion as:
    \begin{equation*}
        \Welfare(p_1, \dots, \Welfare(p_1, \dots, \Welfare(\dots), \dots, p_N), \dots, p_N) \leq p_i
    \end{equation*}

    Two other two useful Self-Reference Systems $(\Welfare, \wedge_i)$ and $(\Welfare, \Omega_i)$ have application functions:
    \begin{align*}
        \wedge_i(\ (p_1, \dots, p_i, \dots, p_N),\ r\ ) & \coloneqq (p_1, \dots, p_i \wedge  r, \dots, p_N) \\                                                               
        \Omega_i(\ (p_1, \dots, p_i, \dots, p_N),\ r\ ) & \coloneqq (\c, \dots, p_i \wedge  r, \dots, \c)
    \end{align*}
\end{example}

\noindent Now that we have our primary examples of Self-Reference Systems in Arithmetic Logic and Social Choice Theory, we may proceed to define additional relevant properties that Self-Reference Systems may satisfy.

\subsection{\subsectiontextc}\label{subsection:fp-property}

For any Self-Reference System, we can define a fixed-point property, which when satisfied in a certain manner for the Self-Reference Systems of Arithmetic Logic (Example \ref{example:godel-1}), implies the fixed-point condition of the standard Diagonalisation Lemma (Lemma \ref{lemma:diag-lemma}). Moreover, the fixed-point property being satisfied in a certain manner for the Self-Reference Systems of Social Choice Theory (Example \ref{example:arrow-1}), is equivalent to that Social Welfare Function having a dictator.

\parsplit

We begin by defining the fixed-point property for Self-Reference Systems in general. Then, we restate the Diagonalisation Lemma in Arithmetic Logic, and the definition of a Dictator in Social Choice Theory in terms of statements about the fixed-point property holding for particular Self-Reference Systems.

\parsplit
\begin{definition}\label{definition:fixed-point-property}
    A Self-Reference System $(\enc, \app)$ satisfies the \term{fixed point property} for an expression $e \in \E$ if there exists an $f \in \E$ such that $e \ast f = f$, i.e.,  $\app(e, \enc(f)) = f$.
\end{definition}

\parsplit
\begin{proposition}[The Diagonalisation as the Fixed-Point Property]\label{proposition:diagonalisation-lemma-instance}
    If the Self-Reference System $(\NGodelvar, \app)$ of Example \ref{example:godel-1} satisfies the fixed point property by for all expressions in $\L_1$ by fixed-points in $\L_0 \subset \L_1$, then the standard Diagonalisation Lemma holds.
\end{proposition}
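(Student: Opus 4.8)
The plan is to unfold the definitions of the fixed-point property and of the application function $\app$ for the specific Self-Reference System $(\NGodelvar, \app)$ of Example \ref{example:godel-1}, and then observe that the fixed-point equation, read as an equality of equivalence classes in the Lindenbaum Algebra $\L_0$, is exactly the logical equivalence asserted by the Diagonalisation Lemma --- once the bookkeeping relating $\DObarvar$ to $\Obarvar$ of the canonical representative is handled.

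First I would fix an arbitrary predicate $Q(x)$, whose equivalence class lies in $\L_1$, and invoke the hypothesis: since the fixed-point property holds for all expressions of $\L_1$ with fixed points in $\L_0$, there exists a proposition-class $f \in \L_0$ with $Q(x) \ast f = f$. Next I would unfold the left-hand side using Note \ref{note:ast-shorthand} together with the definition of $\app$ in Example \ref{example:godel-1}: because $Q(x)$ is a predicate, $Q(x) \ast f = \app(Q(x), \NGodel{f}) = Q(\underline{\NGodel{f}}) = Q(\DObar{f})$. Hence the fixed-point equation reads $Q(\DObar{f}) = f$ as elements of $\L_0$.

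The key translation step is that equality in $\L_0$ is, by Definition \ref{definition:lindenbaum-algebra-def}, precisely logical equivalence in $\Theory$; thus $\Theory \vdash Q(\DObar{f}) \leftrightarrow f$. To match the exact form of Lemma \ref{lemma:diag-lemma}, I would then take the sentence $C$ to be $f'$, the shortest formula logically equivalent to $f$, so that by Definition \ref{definition:enc} we have $\Obar{C} = \Obar{f'} = \DObar{f}$, and, since $C$ and $f$ are logically equivalent, $\Theory \vdash C \leftrightarrow f$. Chaining these equivalences yields $\Theory \vdash Q(\Obar{C}) \leftrightarrow C$, which is exactly the Diagonalisation Lemma.

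The only point requiring genuine care --- rather than a real obstacle --- is the distinction between an equivalence class and a chosen representative sentence: the fixed point $f$ is a class in $\L_0$, whereas Lemma \ref{lemma:diag-lemma} demands an honest sentence, and the numeral appearing inside $Q$ is $\DObar{f} = \underline{\NGodel{f}}$ rather than $\Obar{f}$. Choosing the canonical representative $f'$ supplied by $\NGodelvar$ is precisely what reconciles $\DObar{f}$ with $\Obar{C}$, and I would appeal to Proposition \ref{proposition:well-defined} to guarantee both that $\app$ is well-defined on equivalence classes and that substituting the canonical numeral respects logical equivalence.
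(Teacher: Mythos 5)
Your proposal is correct and follows essentially the same route as the paper's proof: invoke the fixed-point property for an arbitrary predicate $Q(x)$, unfold $\ast$ to obtain $Q(\DObar{f}) = f$ in $\L_0$, and pass to the shortest representative $f'$ so that $\DObar{f} = \Obar{f'}$ yields $\Theory \vdash Q(\Obar{C}) \leftrightarrow C$. Your treatment is slightly more explicit about the class-versus-representative bookkeeping, but the argument is the same.
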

\begin{proof}
    Recall by the definition of $\NGodelvar$ (Definition \ref{definition:enc}) that for a formula $f$, we write $f'$ to denote the shortest formula logically equivalent to $f$. If for an arbitrary predicate $Q(x) \in \L_1$ there is a sentence $C \in \L_0$ such that $C = Q(x) \ast C = Q(\DObar{C})$, then by the definition of Lindenbaum Algebras this implies that $\Theory \vdash Q(\Obar{C'}) \leftrightarrow C'$, thus satisfying the standard Diagonalisation Lemma by the arbitrariness of $Q(x)$.\\
\end{proof}

\noindent The converse to Proposition \ref{proposition:diagonalisation-lemma-instance} does not necessarily hold. A proof that $(\NGodelvar, \app)$ satisfies the fixed-point property as above will be demonstrated in Section \ref{subsection:abstract-diag}.

\parsplit
\begin{proposition}[Dictators and the Fixed-Point Property]\label{proposition:dictator-as-fixed-point}
    Given the Self-Reference System $(\Welfare, \join_i)$ of Example \ref{example:arrow-1}, the Social Welfare Function corresponding to $\Welfare$ has a dictator at individual $i$ if and only if for every valid profile $p \in \PN$: $(\Welfare, \join_i)$ satisfies the fixed-point property by $p$ itself, i.e., $p \ast p = p$.
\end{proposition}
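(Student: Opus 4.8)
The plan is to unfold the definition of the $\ast$ operation for the specific Self-Reference System $(\Welfare, \join_i)$, show that the fixed-point equation $p \ast p = p$ collapses to a condition on the single coordinate $i$, and then invoke Proposition \ref{proposition:dictator-equivalence} to conclude.

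First I would compute $p \ast p$ explicitly. By Note \ref{note:ast-shorthand}, $p \ast p = \app(p, \enc(p)) = \join_i(p, \Welfare(p))$, and by the definition of $\join_i$ in Example \ref{example:arrow-1} this equals $(p_1, \dots, p_i \join \Welfare(p), \dots, p_N)$. Since $\join_i$ leaves every coordinate other than $i$ untouched, the tuples $p \ast p$ and $p$ already agree in all coordinates except possibly the $i$-th. Hence $p \ast p = p$ holds if and only if the $i$-th coordinates match, i.e.\ if and only if $p_i \join \Welfare(p) = p_i$.

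Next I would appeal to the commutativity of $\join$: from Definition \ref{definition:veebar}, $\join$ agrees with the least upper bound $\vee$ (which is commutative) when both arguments lie in $\P$, and is symmetrically equal to $\i$ whenever either argument is $\c$. Thus $p_i \join \Welfare(p) = \Welfare(p) \join p_i$, so the fixed-point condition $p \ast p = p$ is precisely $\Welfare(p) \join p_i = p_i$. Quantifying over all valid profiles, the statement that $(\Welfare, \join_i)$ satisfies the fixed-point property by $p$ for every $p \in \PN$ is exactly the hypothesis $\forall p \in \PN:\ \Welfare(p) \join p_i = p_i$ of Proposition \ref{proposition:dictator-equivalence}, which that proposition equates to $\Welfare$ having a dictator at $i$.

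I expect no substantive obstacle here: the argument is essentially a direct unfolding of definitions followed by one citation. The only step needing care is the reduction of the tuple-valued fixed-point equation to the scalar equation at coordinate $i$, which rests on $\join_i$ acting as the identity on all coordinates other than $i$; once this is in place, Proposition \ref{proposition:dictator-equivalence} does the remaining work.
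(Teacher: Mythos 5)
Your proposal is correct and follows essentially the same route as the paper's proof: unfold $p \ast p$ via the definition of $\join_i$, reduce the tuple equation to $p_i \join \Welfare(p) = p_i$ at coordinate $i$, and invoke Proposition \ref{proposition:dictator-equivalence}. Your explicit remark on the commutativity of $\join$ is a minor point of extra care that the paper leaves implicit.
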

\begin{proof}
    Given an arbitrary $p = (p_1, \dots, p_i, \dots, p_N) \in \PN$, $p \ast p = p$ occurs if and only if:
    \begin{equation}
        (p_1, ..., p_i \join \Welfare(p), ..., p_N) = (p_1, ..., p_i, ..., p_N)
    \end{equation}
    This occurs if and only if $p_i \join \Welfare(p) = p_i$. But by the arbitrariness of $p$ this is equivalent to $\Welfare$ having a dictator at $i$ by Proposition \ref{proposition:dictator-equivalence}.\\
\end{proof}

We have shown that key components of incompleteness in Arithmetic Logic (The Diagonalisation Lemma) and impossibility in Social Choice Theory (The Existence of a Dictator) correspond to the fixed-point property being satisfied in a particular manner for particular Self-Reference Systems.

\parsplit

However, to prove that the fixed-points in question exist in Arithmetic Logic and to demonstrate that fixed-points of the same type exist in Social Choice Theory, we need to develop an abstract theory of Diagonalisation (Section \ref{subsection:abstract-diag}). In order to develop that theory, we must first define an important subtype of Self-Reference Systems called Embeddable Self-Reference Systems, the subject of the next section.

\subsection{\subsectiontextd}\label{subsection:embeddable}

To prove predicates in the Self-Reference Systems underlying Arithmetic Logic satisfy the fixed-point property (Proposition \ref{proposition:diagonalisation-lemma-instance}), we derive a result called the \textit{Abstract Diagonalisation Lemma} (Theorem \ref{theorem:diag-abstract}). In order to state and prove that result, we need to define a special type of Self-Reference System called an Embeddable Self-Reference System.

\parsplit

A Self-Reference System is Embeddable when --- informally --- \textit{there is a way to calculate everything at the expression level}. Or more specifically, a Self-Reference System is Embeddable when there is an associative, binary \term{composition} operation on expressions and an \term{embedding} operation from constants to expressions such that application (of an expression to a constant) is equivalent to composition with an embedding. We begin by defining Embeddable Self-Reference Systems and then demonstrate that our existing examples of Self-Reference Systems (Examples \ref{example:godel-1} and \ref{example:arrow-1}) are embeddable.

\parsplit
\begin{definition}\label{definition:embeddable}
    Given an associative binary \term{composition} operation $\comp: \E \times \E \rightarrow \E$ and an \term{embedding} function $\emb: \C \rightarrow \E$, we say that a Self-Reference System $(\enc, \app)$ defined on expressions $\E$ and constants $\C$ is \term{embeddable} in $(\emb, \comp)$ if:
    \begin{equation}\label{equation:embeddable}
        \forall e \in \E,\ \forall c \in \C:\ \app(e, c) = e \comp \emb(c)
    \end{equation}
    As shorthand, instead of writing that the Self-Reference System $(\enc, \app)$ is embeddable in $(\emb, \comp)$, we write the \term{Embeddable Self-Reference System} $(\enc, \emb, \comp)$ because $\app$ can be defined by Equation (\ref{equation:embeddable}).
\end{definition}

\parsplit

\begin{definition}
    For an embeddable Self-Reference System $(\enc, \emb, \comp)$ with expressions $\E$, we write $\DObar{-}$ for the composite $\emb \circ \enc$, i.e., $\forall e \in \E$: $\DObar{e} = \emb(\enc(e))$.
\end{definition}

\parsplit

\noindent  Importantly, this means that for expressions $e, f \in \E$: $e \ast f = e \comp \DObar{f}$.

\parsplit
\begin{example}\label{example:godel-embedding}
    The Self-Reference System $(\NGodelvar, \app)$ (see Example \ref{example:godel-1}) is embeddable in $(\emb, \comp)$ for the number to numeral inclusion $\emb: \N \hookrightarrow \L_1$ (i.e., $\emb(n) = \underline{n}$) and substitution $\comp$ defined by
    \begin{align*}
         & A(x) \comp B(y) \coloneqq A(B(y)) &  & \text{For $A(x), B(x) \in \L_1$}                          \\
         & A(x) \comp C \coloneqq A(C)       &  & \text{For $A(x) \in \L_1$ and $C \in \L_0$}               \\
         & D \comp f \coloneqq D             &  & \text{For any sentence $D \in \L_0$ and any $f \in \L_1$}
    \end{align*}
    Condition (\ref{equation:embeddable}) holds because for any $A(x) \in \L_1$ and $n \in \N$: $A(x) \comp \emb(n) = A(\underline{n}) = \app(A(x), n)$, using the definition of $\app$ in Example \ref{example:godel-1}. This example shows that the embedding function $\emb$ coupled with the composition operation $\comp$ succeeds in recovering the application function $\app$ of Self-Reference System in point. This demonstrates formally that the application function is given by substitution.
\end{example}

\parsplit
\begin{definition}\label{definition:indicator}
    Given $\PC$ and $\PCN$ as per Definition \ref{definition:complete-condorcet-paradox} and an individual $i \in \{1,\dots,N\}$, we define the \term{$i^{th}$ indicator} as $\T_i \in \PCN$ as the element $(\c, \dots, \i, \dots, \c) \in \PCN$, i.e., consisting entirely of $\c$ except at the $i^{th}$ index.
\end{definition}

\begin{example}\label{example:arrow-embedding}
    For a fixed individual $i$, we define $\emb_i: \PC \rightarrow \PCN$ such that for $\bwedge$ defined on $\PN$ as coordinate-wise $\wedge$, we find for instance that $(\Welfare, \Omega_i)$ of Example \ref{example:arrow-1} is embeddable in $(\emb_i, \bwedge)$ with $\emb_i(r) \coloneqq (\c, \dots, r, \dots, \c)$:
    \begin{align*}
        \Omega_i(p, \Welfare(q)) & = (\c, \dots, p_i\ \wedge\ \Welfare(q), \dots, \c)                           \\
                                 & = (p_1\ \wedge\ \c, \dots, p_i\ \wedge\ \Welfare(q), \dots, p_N\ \wedge\ \c) \\
                                 & = p \bwedge \DObar{q}
    \end{align*}
\end{example}
Although not used in this paper, note that altering the above embedding to $\text{altEmb}(r) \coloneqq (\i, \dots, r, \dots, \i)$, $(\Welfare, \wedge_i)$ is embeddable in $(\text{altEmb}, \wedge)$. Then, defining the $\text{altComp}: \PCN \times \PCN \rightarrow \PCN$ to compute $\join$ at coordinate $i$ and $\wedge$ otherwise, we can internalise $(\Welfare, \join_i)$ in $(\text{altEmb}, \text{altComp})$.

\parsplit

When a Self-Reference Systems is Embeddable Self-Reference Systems, we can reason entirely at the level of expressions\footnote{One-typed frameworks for studying diagonalisation has features in other works, e.g., Gaifman \cite{gaifman} and Smullyan \cite{smullyan-diagonalization,smullyan-theory-formal-systems} although without explicit encoding and embedding.}. We will exploit this fact to prove the Abstract Diagonalisation Lemma in the next section.

\subsection{\subsectiontexte}\label{subsection:abstract-diag}

In this section, we derive a generalised version of the Diagonalisation Lemma for Self-Reference Systems called the \term{Abstract Diagonalisation Lemma}. In the Self-Reference Systems of Arithmetic Logic (see Example \ref{example:godel-1}), the Abstract Diagonalisation Lemma yields the standard Diagonalisation Lemma. Moreover, in Social Choice Theory, the Self-Reference System $(\Welfare, \Omega_i)$ (see Example \ref{example:arrow-1}) satisfies the Abstract Diagonalisation Lemma if and only if $\Welfare$ has a dictator at $i$.

\parsplit

This Abstract Diagonalisation Lemma exploits a new property of expressions in a Self-Reference System called \term{internalisation}, which amounts to a particular expression in $\E$ of a Self-Reference System being a \textit{code} for a function \textit{external} to the Self-Reference System (e.g., a $\E \rightarrow \E$ function). We define internalisation generally as follows:

\parsplit
\begin{definition}
    A function $\alpha: X \rightarrow Y$ is \term{internalised} by a function $\beta: Z \times X \rightarrow Y$ if $\exists\ z_\alpha \in Z$ such that:
    \begin{equation*}
        \forall x \in X:\ \alpha(x) = \beta(z_\alpha, x)
    \end{equation*}
    We also say that $z_\alpha$ \term{internalises} $\alpha$ with respect to $\beta$.
\end{definition}

\begin{note}
    Related definitions exist in other approaches, e.g., Yanofsky uses the term \say{representable} for internalisation with respect to functions $T \times T \rightarrow Y$~\cite{yanofsky}. In Category Theory, Lawvere uses the term \textit{weakly point surjective} to refer to functions $\beta: Z \times X \rightarrow Y$ such that all functions of the form $X \rightarrow Y$ can be internalised with respect to $\beta$~\cite{Lawvere}.
\end{note}

\parsplit
\begin{example}\label{example:godel-diagonaliser}
    In Arithmetic Logic, internalisation can be motivated as follows: because substituting the free variable of a predicate in $\F_1$ with a sentence in $\F_0$ produces a new formula in $\F_0$, many predicates in $\F_1$ \textit{behave} like a $\F_0 \rightarrow \F_0$ function. Moreover, many $\F_1$ elements behave like $\F_1 \rightarrow \F_1$ functions because of the following inclusions:
    \begin{center}
        \begin{tikzcd}
            \F_1 \arrow[r, "G", hook] & \N \arrow[rr, "Numeral", hook] && \F_0 \arrow[rr, "inclusion", hook] && \F_1
        \end{tikzcd} $\implies \F_0 \cong \F_1$
    \end{center}
    As such, the key internalisation that establishes the Abstract Diagonalisation Lemma is the predicate $diag(x)$ internalising the \textit{diagonal function}:
    \begin{equation*}
        \diag: \L_1 \rightarrow \L_1 \quad\text{defined by mappings}\quad B(x) \mapsto \DObar{B(\DObar{B(x})}
    \end{equation*}
\end{example}

\begin{note}\label{note:internalisablility-caveat}
    Demonstrating that $\diag(x)$ exists is highly non-trivial. Often diagonalisation of more intricate functions than $\diag$ is demonstrated instead (see Salehi~\cite{salehi-presentation, salehi-diagonal-lemma} for examples of relevant approaches). That said, given that the diagonaliser predicate $diag(x)$ exists in $\F_1$, its existence over $\L_1$ (i.e., with respect to our Gödel numbering $G': \L_1 \rightarrow \N$) follows because $G'$ is computable given $G$ is (see Proposition \ref{proposition:well-defined}).
\end{note}

We now proceed to define the diagonal function $\diag$ for Self-Reference Systems in general.

\parsplit
\begin{definition}\label{definition:composites}
    Given an Embeddable Self-Reference System $(\enc, \emb, \comp)$ with expressions $\E$, we define $\diag: \E \rightarrow \E$ by the following composites (left), defined by mappings (right):
    \begin{center}
        \begin{tikzcd}
            E \times E \arrow[r, "\ast"]                                                     & E \arrow[r, "\DObarvar"] & E &  & {(e, e)} \arrow[r, maps to]                                               & {e \ast e} \arrow[r, maps to] & \DObar{e \ast e} \\
            E \arrow[u, "\Delta"] \arrow[rru, "\diag"'] &                         &   &  & e \arrow[u, maps to] \arrow[rru, maps to] &                                        &
        \end{tikzcd}
    \end{center}
\end{definition}

\parsplit

\begin{definition}
    A Self-Reference System $(\enc, \app)$ with expressions $\E$ has a \term{diagonaliser} if it has an expression $f_\diag \in \E$ that internalises the function $\diag: \E \rightarrow \E$ with respect to $\ast$ (see Definition \ref{definition:composites}). In other words, for every expression $e \in \E$: $\diag \ast e = \DObar{e \ast e}$.
\end{definition}

\parsplit

\begin{restatable}{theorem}{abstractdiagonalisationlemma}{\textbf{(The Abstract Diagonalisation Lemma)}}\label{theorem:diag-abstract}
    If an Embeddable Self-Reference System $(\enc, \emb, \comp)$, has a diagonaliser $f_\diag$ then the fixed-point property is satisfied for all expressions.
\end{restatable}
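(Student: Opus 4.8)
The plan is to adapt the classical self-referential construction underlying Carnap's Diagonalisation Lemma (Lemma \ref{lemma:diag-lemma}) to the abstract setting, with substitution replaced by the associative composition $\comp$ and Gödel numbering replaced by $\DObarvar = \emb \circ \enc$. Fix an arbitrary expression $e \in \E$; the goal is to exhibit an $f \in \E$ with $e \ast f = f$, which is exactly the fixed-point property (Definition \ref{definition:fixed-point-property}) for $e$.

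First I would form the diagonalised expression $h \coloneqq e \comp f_\diag$, which plays the role of the predicate $Q(\diag(x))$ in the standard proof: it pre-composes the target $e$ with the diagonaliser. I would then set $f \coloneqq h \ast h$, the self-application of $h$, which corresponds to the usual Gödel-type self-referential sentence obtained by self-applying a diagonalised predicate.

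The verification is a short chain of equalities. Using the identity $e \ast f = e \comp \DObar{f}$ recorded just after the definition of $\DObarvar$, then the defining property of the diagonaliser applied to $h$, namely $f_\diag \ast h = \DObar{h \ast h}$, and the same identity once more, I would compute
\begin{align*}
  e \ast f = e \comp \DObar{f} = e \comp \DObar{h \ast h} = e \comp \left( f_\diag \comp \DObar{h} \right).
\end{align*}
Reassociating the right-hand side by the associativity of $\comp$ and then unwinding the abbreviations $h = e \comp f_\diag$ and $h \ast h = h \comp \DObar{h}$ yields
\begin{align*}
  e \comp \left( f_\diag \comp \DObar{h} \right) = \left( e \comp f_\diag \right) \comp \DObar{h} = h \comp \DObar{h} = h \ast h = f,
\end{align*}
so $e \ast f = f$, as desired.

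I expect the only genuinely creative step to be the choice of witness $h = e \comp f_\diag$: once one recognises that $e$ must be composed with the diagonaliser \emph{before} self-applying, the rest is purely formal. The two load-bearing hypotheses are the associativity of $\comp$, supplied by the definition of an Embeddable Self-Reference System (Definition \ref{definition:embeddable}), and the diagonaliser equation $f_\diag \ast h = \DObar{h \ast h}$; associativity is precisely what licenses the re-bracketing step, and the argument would fail without it. Since $e \in \E$ was arbitrary, this establishes the fixed-point property for every expression.
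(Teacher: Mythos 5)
Your proposal is correct and matches the paper's own proof essentially verbatim: the paper also takes the witness $q \coloneqq Q \comp f_\diag$ and fixed point $p \coloneqq q \ast q$ (your $h$ and $f$) and verifies $Q \ast p = p$ by the same chain, merely packaging the associativity step as the identity $(f \comp g) \ast h = f \comp (g \ast h)$ rather than unfolding $\ast$ into $\comp$ with $\DObarvar$ as you do. No gaps.
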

\begin{proof}
    For an arbitrarily $d \in \E$, we define $e \coloneqq d \comp f_\diag$, and $f \coloneqq e \ast e$, and find that $d \ast f = f$ as desired. For the full calculation, see Appendix \ref{appendix:result-seciton-proofs}.\\
\end{proof}

\parsplit

\begin{theorem}[The Classic Diagonalisation Lemma]
    \label{theorem:arithmetic-logic-fixed-points}
    The Embeddable Self-Reference System $(\NGodelvar, \comp, \emb)$ of Example \ref{example:godel-embedding} has a diagonaliser and satisfies the fixed-point property for all expressions in $\L_1$ by fixed-points in $\L_0 \subset \L_1$.
\end{theorem}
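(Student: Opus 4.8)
The plan is to obtain both conclusions at once by feeding the embeddable system of Example~\ref{example:godel-embedding} into the already-established Abstract Diagonalisation Lemma (Theorem~\ref{theorem:diag-abstract}). Embeddability is given, so the only genuinely new work is exhibiting a diagonaliser; once that is in hand, Theorem~\ref{theorem:diag-abstract} delivers the fixed-point property for free, and a short check on the form of $\app$ confirms the fixed-points sit in $\L_0$.

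First I would secure the diagonaliser. By Definition~\ref{definition:composites} the relevant diagonal function is $\diag\colon \L_1 \to \L_1$ sending $B(x) \mapsto \DObar{B(\DObar{B(x)})}$ (Example~\ref{example:godel-diagonaliser}), and a diagonaliser is a predicate $diag(x) \in \L_1$ with $diag(x) \ast e = \DObar{e \ast e}$ for all $e$. The crucial observation is that $\diag$ is \emph{computable} as a map on codes: from $\NGodel{B(x)}$ one recovers the shortest representative, performs the substitution yielding $B(\DObar{B(x)})$, and re-encodes. The standard representability theorem for computable functions then supplies a predicate $diag(x) \in \F_1$ internalising this map, and since $\NGodelvar$ is computable whenever $G$ is (Proposition~\ref{proposition:well-defined}), the predicate descends to $\L_1$. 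This is exactly the step flagged in Note~\ref{note:internalisablility-caveat}, and it is the main obstacle: producing $diag(x)$ explicitly is delicate, so the argument defers to the representability machinery rather than constructing the predicate by hand.

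With the diagonaliser available, Theorem~\ref{theorem:diag-abstract} applies verbatim: for any $d \in \L_1$, setting $e \coloneqq d \comp diag(x)$ and $f \coloneqq e \ast e$ yields $d \ast f = f$, establishing the fixed-point property for every expression.

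Finally I would verify that these fixed-points lie in $\L_0$. Reading off the application function of Example~\ref{example:godel-1}, for any $e \in \L_1$ the value $e \ast e = \app(e, \NGodel{e})$ equals $B(\underline{\NGodel{e}})$ when $e = B(x)$ is a predicate and equals $e$ itself when $e$ is already a proposition; in both cases it is a proposition, i.e.\ an element of $\L_0$. Since the fixed-point produced by Theorem~\ref{theorem:diag-abstract} is precisely $f = e \ast e$, it automatically belongs to $\L_0 \subset \L_1$. By Proposition~\ref{proposition:diagonalisation-lemma-instance} this recovers the classical Diagonalisation Lemma.
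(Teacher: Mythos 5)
Your proposal is correct and follows essentially the same route as the paper: it invokes Note \ref{note:internalisablility-caveat} (computability/representability) for the existence of the diagonaliser predicate, then runs the proof of Theorem \ref{theorem:diag-abstract} with $e = d \comp diag(x)$ and $f = e \ast e$, which is exactly the paper's $D(\DObar{D(x)})$. Your explicit check that $e \ast e$ always lands in $\L_0$ is a small but welcome elaboration of a step the paper leaves implicit.
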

\begin{proof}
    By Note \ref{note:internalisablility-caveat}, we ascertain the existence of a diagonaliser (predicate) $diag(y)$. Then, given any predicate $A(x)$, we produce a fixed point by directly following the proof of the Abstract Diagonalisation Lemma as follows. We first define the predicate $D(x)$ to be $A(x) \comp diag(y) = A(diag(y))$, i.e., so that $\forall B(x) \in \L_1$: $D(\DObar{B(x)}) = A(\DObar{B(\DObar{B(x)})})$ (see Note \ref{note:internalisablility-caveat}). Then, the fixed-point of $A(x) \in \L_1$ is $D(\DObar{D(x)}) \in \L_0$.\\
\end{proof}

\parsplit

\begin{theorem}[Dictators \& Diagonalisers]\label{theorem:dictator-as-diagonaliser}
    Given the Self-Reference System $(\Welfare, \Omega_i)$ of Example \ref{example:arrow-1}, the Social Welfare Function corresponding to $\Welfare$ has a dictator at individual $i$ if and only if the $i^{th}$ indicator $\T_i = (\c, \dots, \i, \dots, \c)$ is a diagonaliser restricted to the domain $\PN$.
\end{theorem}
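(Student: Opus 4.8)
The plan is to unfold the definition of a diagonaliser, exploit the embeddability of $(\Welfare, \Omega_i)$ in $(\emb_i, \bwedge)$ from Example~\ref{example:arrow-embedding} to compute both sides of the diagonaliser identity explicitly, and then reduce the statement to a single condition on $\Welfare$ that can be matched clause-by-clause against Definition~\ref{definition:dictator-generalised}. Throughout I write $\DObar{e} = \emb_i(\Welfare(e)) = (\c, \dots, \Welfare(e), \dots, \c)$ and use the embeddability identity $e \ast e = e \bwedge \DObar{e}$.

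The first step is the clean identity $\T_i \ast e = \DObar{e}$, valid for every $e \in \PCN$: indeed $\T_i \ast e = \Omega_i(\T_i, \Welfare(e)) = (\c, \dots, \i \wedge \Welfare(e), \dots, \c)$, and since $\i$ is the top element of $(\PC, \leq)$ we have $\i \wedge \Welfare(e) = \Welfare(e)$, so $\T_i \ast e = \emb_i(\Welfare(e)) = \DObar{e}$. Dually, since $\c$ is the bottom element (so $e_j \wedge \c = \c$), the self-application unfolds as $e \ast e = e \bwedge \DObar{e} = \emb_i(e_i \wedge \Welfare(e))$. As $\emb_i$ is injective, the diagonaliser identity $\T_i \ast e = \DObar{e \ast e}$, required for $e \in \PN$, is therefore equivalent to the single equation $\Welfare(e) = \Welfare\big(\emb_i(e_i \wedge \Welfare(e))\big)$ holding for all $e \in \PN$.

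For the forward direction I would assume $\Welfare$ has a dictator at $i$ and verify this equation. The crucial simplification is that under dictatorship $e_i \wedge \Welfare(e) = \Welfare(e)$ for every $e \in \PN$: if $e_i = \i$ this is again the top-element identity, whereas if $e_i \neq \i$ then the contrapositive of clause~(1) of Definition~\ref{definition:dictator-generalised} gives $\Welfare(e) \neq \c$ and clause~(2) gives $\Welfare(e) \leq e_i$, so that $e_i \wedge \Welfare(e) = \Welfare(e)$. Hence $e \ast e = \DObar{e}$ and the equation collapses to $\Welfare(e) = \Welfare(\DObar{e})$. For the converse I would run this backwards: the ordering characterisation $e_i \wedge \Welfare(e) = \Welfare(e) \iff \Welfare(e) \leq e_i$ recovers clause~(2), while clause~(1), which concerns the value $\c$, is entangled with the evaluation of $\Welfare$ on $\c$-heavy profiles and is handled together with the obstacle below.

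The hard part is controlling $\Welfare$ on the profile $e \ast e = \emb_i(e_i \wedge \Welfare(e))$, which equals $\c$ in every coordinate other than $i$ and so lies in $\PCN \setminus \PN$, where Note~\ref{note:welfare-abuse} permits $\Welfare$ to behave arbitrarily. This is exactly the point at which the restriction ``to the domain $\PN$'' does its work: the indicator $\T_i$ routes the aggregate back into coordinate $i$ alone while silencing every other voter with $\c$, so that the diagonaliser identity expresses the demand that re-aggregating the dictator's own verdict reproduces that verdict. Making both directions airtight thus reduces to pinning down $\Welfare \circ \emb_i$ on the relevant preference relations and reconciling it with the first dictator clause; I expect this bookkeeping around $\c$ and $\i$, rather than any of the lattice manipulations, to be where the genuine care is required.
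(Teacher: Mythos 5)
Your computations in the first two paragraphs are correct, and they expose a real issue: under the paper's formal definition of a diagonaliser, namely $f_\diag \ast e = \DObar{e \ast e}$, the condition restricted to $\PN$ reduces, exactly as you say, to $\Welfare(e) = \Welfare(\emb_i(e_i \wedge \Welfare(e)))$, which hinges on the values of $\Welfare$ on profiles of the form $(\c,\dots,r,\dots,\c) \notin \PN$, where Note \ref{note:welfare-abuse} leaves $\Welfare$ unconstrained. The obstacle you flag in your last paragraph is therefore not bookkeeping that ``genuine care'' will dispatch --- on your literal reading it is fatal to both directions. For the forward direction, a dictatorial $\Welfare$ may be redefined arbitrarily on $\emb_i(\PC)$ so that $\Welfare(\emb_i(\Welfare(e))) \neq \Welfare(e)$. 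For the converse, the constant function $\Welfare \equiv \i$ gives $\T_i \ast p = \T_i = \DObar{p \ast p}$ for every $p \in \PN$, so $\T_i$ internalises $\diag$ on $\PN$, yet there is no dictator at $i$. Your proposal is thus incomplete as written and cannot be completed on the reading you adopt without an extra hypothesis.

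The paper's own proof reads the diagonaliser identity differently: it tests $\T_i \ast p = p \ast p$ rather than $\T_i \ast p = \DObar{p \ast p}$, i.e., it drops the outer encoding. Under that reading both sides already lie in $\emb_i(\PC)$ and the comparison happens at coordinate $i$ before $\Welfare$ is ever re-applied: $(\c,\dots,\i \wedge \Welfare(p),\dots,\c) = (\c,\dots,p_i \wedge \Welfare(p),\dots,\c)$ holds for all $p \in \PN$ if and only if $\Welfare(p) = p_i \wedge \Welfare(p)$, i.e., $\Welfare(p) \leq p_i$, which is dictatorship at $i$. Your first two paragraphs already contain every identity needed for that version of the argument; what is missing is either the decision to match the paper's weaker form of the condition, or an added hypothesis pinning down $\Welfare \circ \emb_i$ (for instance $\Welfare(\emb_i(r)) = r$), under which your reduction $\Welfare(e) = \Welfare(\emb_i(e_i \wedge \Welfare(e)))$ collapses to $\Welfare(e) = e_i \wedge \Welfare(e)$ and both directions close. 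As it stands, you have correctly located a mismatch between the theorem's statement and the definition of a diagonaliser, but you have not proved the theorem.
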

\begin{proof}
    Firstly, we establish when $\T_i$ is a diagonaliser. Indeed, $p \in \PCN$ we must have that:
    \begin{equation}\label{equation:diagonaliser}
        \T_i \ast p = (\c, \dots, \i \wedge \Welfare(p), \dots, p) = (\c, \dots, p_i \wedge \Welfare(p), \dots, p) = p \ast p
    \end{equation}
    However, by Proposition \ref{proposition:dictator-equivalence}, Equation (\ref{equation:diagonaliser}) holds $\forall p \in \PN$ if and only if $\Welfare(p) = p_i\ \wedge\ \Welfare(p)$, i.e., $\Welfare(p) \leq p_i$, i.e., $\Welfare$ has a dictator at $i$.\\
\end{proof}

\begin{note}
    Because $\T_i$ is a diagonaliser only with respect to the restricted domain $\PN \subset \PCN$, we cannot guarantee $(\Welfare,\Omega_i)$ satisfies the Abstract Diagonalisation Lemma without further specification for how $\Welfare$ behaves outside $\PN$, in particular for expressions of the form $e \ast e = (\c, \dots, \Welfare(e), \dots, \c)$ for $e = \T_i \ast p = (\c, \dots, \Welfare(p), \dots, \c)$.
\end{note}

\subsection{\subsectiontextf}\label{subsection:const-resp}

In this section, we characterise incompleteness in Arithmetic Logic and impossibility Social Choice Theory with respect to a predefined subset of \term{valid elements}. A pair of expressions are \term{consistent} if holding them together (e.g., by logical conjunction) yields a valid element, and the pair is called \term{inconsistent} otherwise. \term{Consistency-respecting} expressions are those that preserve and reflect consistency with respect to application with encodings alone. The incomputability of particular consistency-respecting expressions will be shown to arise within both Gödel's Incompleteness Theorem and Arrow's Impossibility Theorem.

\parsplit

We begin with the definition of a subset of valid elements as a general construction on a meet semi-lattice. Then, we identify valid elements in Self-Reference Systems of Arithmetic Logic and Social Choice Theory.

\parsplit
\begin{definition}\label{definition:me}
    Given a set $S$ and a meet semi-lattice $(S, \wedge)$ with bottom element $\bot \in S$, a \term{subset of valid elements} of $S$ with respect to $\wedge$ is simply a subset $C \subseteq S \setminus \{\bot\}$. For any pair $s, t \in S$, we say that $s$ and $t$ are \term{consistent}, denoted $s \const t$ when $s \wedge t \in C$; they are \term{inconsistent} otherwise, denoted $s \inconst t$.
\end{definition}

\parsplit
\begin{example}[Valid Elements in Arithmetic Logic]\label{example:godel-consistent-pairs}
    In Arithmetic Logic, for propositions $\L_0$, we take all non-contradictory propositions as our subset of valid elements, i.e., $\L_0 \setminus \{\bot\}$ with respect to logical conjunction $\wedge$. In other words, $C, D \in \L_0$ are consistent (i.e., $C \const D$) when $C \wedge D \neq \bot$. Moreover, $\N$ has an equivalent subset of valid elements: $G'(\L_0 \setminus \{\bot\}) \subseteq \N \setminus \{G'(\bot)\}$, i.e., so $n \const m$ in $\N$ when they are the Gödel $n$ and $m$ are the Gödel numbers of consistent propositions.
\end{example}

\parsplit
\begin{example}[Valid Elements in Social Choice Theory]\label{example:soc-me}
    In Social Choice Theory, for preference relations $\PC$, we take weak orders as our subset of valid elements, i.e., $\P \subseteq \PC \setminus \{\c\}$ with respect to $\wedge$ (see Section \ref{subsection:encodings}). So $p \const q$ in $\PC$ when $p \wedge q \neq \c$. Then, over $N$ individuals, we take \textit{valid profiles} (i.e., of weak orders) as our subset of valid elements, i.e., $\PN \subset \PCN \setminus \{(\c,\dots,\c)\}$ with respect to $\wedge$ defined coordinate-wise on $\PCN$. This means that $p \const q$ in $\PCN$ are consistent if for every individual $i$, $p_i$ and $q_i$ are consistent to one another, i.e., $p_i \wedge q_i \neq \c$. Inconsistency here, matches the definition of inconsistent profiles found in \cite[Definition 4.4.1]{paper0-arxiv}.
\end{example}

\begin{definition}\label{definition:consistency-respecting}
    Let $(\enc, \app)$ be Self-Reference System and $\D \subseteq \E \setminus \{\bot\}$ be a subset of valid elements with respect to a meet semi-lattice $\wedge$. We say that an expression $\T \in \E$ is \term{consistency-respecting} if $\forall d, d' \in \D$:
    \begin{equation*}
        d \const d' \text{ if and only if } \T \ast d \const \T \ast d'
    \end{equation*}
    Equivalently, for a consistency respecting $\T$: $d \inconst d'$ if and only if $\T \ast d \inconst \T \ast d'$ for all pairs $d, d' \in \D$ as well.
\end{definition}

\parsplit

In Arithmetic Logic, we show that in an $\omega$-consistent theory, its provability predicate is consistency-respecting in $(\NGodelvar, \app)$ with respect to $\L_0 \setminus \{\bot\}$ and $\wedge$ (see Example \ref{example:godel-consistent-pairs}). Furthermore, if the theory is assumed to be complete, a contradiction follows. The mutual exclusivity of consistency and completeness is the essence of Gödel's Incompleteness Theorem. Then, in Social Choice Theory, we will show that the existence of contradictory preference cycles that follow from Arrow's Impossibility Theorem implies that no consistency-respecting expressions exist for \textit{any} Self-Reference System encoding with such a Social Welfare Function. Conversely, for certain types of dictators, certain consistency-respecting expressions \textit{must} exist.

\parsplit
To prove Gödel's Incompleteness Theorem in terms of a consistency respecting provability predicates $\Provable(x)$, we begin with a number of important properties of $\Provable(x)$.

\parsplit

\begin{restatable}{proposition}{provabilitypredicates}\label{proposition:provability-predicate}
    Given the Self-Reference System $(\NGodelvar, \app)$ of Example \ref{example:godel-1}, $\Provable(x)$ satisfies:
    \begin{enumerate}
        \item $\Provable(\DObar{\ \top\ }) = \top$\\\vspace{4pt}(i.e., the provability of all tautologies is a tautology).
    \end{enumerate}
    Furthermore, if the underlying Arithmetic Logic is:
    \begin{enumerate}
        \setcounter{enumi}{1}
        \item \textit{complete} then for every $D \in \L_0$: $\Provable(\DObar{D}) \vee \Provable(\DObar{\neg D}) = \top$\vspace{4pt}\\
              (i.e., in classical logic either $D$ or $\neg D$ is true and completeness means one of the two must be provable.)
              \begin{equation*}
                  \text{This further implies that: }\neg \Provable(\DObar{D}) \wedge \neg \Provable(\DObar{\neg D}) = \bot
              \end{equation*}
        \item $\omega$-consistent then $\Provable(x)$ is consistency respecting as per Example \ref{example:godel-consistent-pairs}, which further implies that $\forall D \in \L_0$:
              \begin{equation*}
                  \Provable(\Obar{D}) \leq \neg \Provable(\Obar{\neg D})
              \end{equation*}
    \end{enumerate}
\end{restatable}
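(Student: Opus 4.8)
The plan is to read each clause as an identity or inequality in the Boolean algebra $\L_0$ (where equality is provable equivalence and $f \leq g$ means $\Theory \vdash f \to g$), and to derive everything from two ingredients: the derivability condition ``$\Theory \vdash A$ implies $\Theory \vdash \Provable(\Obar{A})$'', together with its standard consequence that $\Provable$ is \emph{monotone} (if $d \leq e$ then $\Theory \vdash d \to e$ lifts to $\Theory \vdash \Provable(\Obar{d}) \to \Provable(\Obar{e})$, so $\Provable(\Obar{d}) \leq \Provable(\Obar{e})$); and the $\omega$-consistency guarantee recalled in the background, namely $\Theory \vdash \Provable(\Obar{\neg Q}) \to \neg\Provable(\Obar{Q})$ for every $Q$. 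Throughout I use that $\Provable(\DObar{f}) = \Provable(\Obar{f})$ in $\L_0$, since $f$ and its shortest equivalent are equiprovable.

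For clause (1), every tautology is a theorem, so the derivability condition gives $\Theory \vdash \Provable(\Obar{\top})$; hence $\Provable(\DObar{\ \top\ })$ is provably equivalent to $\top$, i.e. equals $\top$ in $\L_0$. For clause (2), I read completeness at the meta-level as: for each $D \in \L_0$, either $\Theory \vdash D$ or $\Theory \vdash \neg D$. In the first case the derivability condition yields $\Provable(\DObar{D}) = \top$, in the second $\Provable(\DObar{\neg D}) = \top$; either way the join $\Provable(\DObar{D}) \vee \Provable(\DObar{\neg D})$ equals $\top$. The displayed consequence $\neg\Provable(\DObar{D}) \wedge \neg\Provable(\DObar{\neg D}) = \bot$ then follows at once by De Morgan's law in $\L_0$, complementing $\top$.

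Clause (3) is the substantive one. I would first dispatch the stated consequence: substituting $Q = \neg D$ into the $\omega$-consistency guarantee and using $\neg\neg D = D$ gives $\Theory \vdash \Provable(\Obar{D}) \to \neg\Provable(\Obar{\neg D})$, i.e. $\Provable(\Obar{D}) \leq \neg\Provable(\Obar{\neg D})$. For consistency-respecting itself (Definition \ref{definition:consistency-respecting}), recall that in $\L_0$ a pair $d, d'$ is inconsistent exactly when $d \wedge d' = \bot$, equivalently $d \leq \neg d'$. I would then prove the implication $d \inconst d' \Rightarrow \Provable(\DObar{d}) \inconst \Provable(\DObar{d'})$ as follows: from $d \leq \neg d'$, monotonicity gives $\Provable(\Obar{d}) \leq \Provable(\Obar{\neg d'})$, and the $\omega$-consistency guarantee (with $Q = d'$) gives $\Provable(\Obar{\neg d'}) \leq \neg\Provable(\Obar{d'})$; chaining yields $\Provable(\Obar{d}) \leq \neg\Provable(\Obar{d'})$, i.e. $\Provable(\DObar{d}) \wedge \Provable(\DObar{d'}) = \bot$. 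This establishes one of the two implications packaged in the biconditional.

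The main obstacle is the converse, $\Provable(\DObar{d}) \const \Provable(\DObar{d'}) \Rightarrow d \const d'$ --- equivalently, that inconsistency at the provability layer \emph{reflects back} to inconsistency of $d$ and $d'$. Unlike the forward inference, this runs against the grain of monotonicity: from $\Provable(\Obar{d}) \leq \neg\Provable(\Obar{d'})$ one wants to recover $d \leq \neg d'$, which is a soundness-type descent rather than a lifting step. I expect this is exactly where $\omega$-consistency must do its real work --- through the underlying principle that what is provably provable is provable --- so that the provability layer cannot manufacture an inconsistency absent below. I would argue the contrapositive: if $d$ and $d'$ are jointly consistent, their provability assertions cannot be driven to $\bot$. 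The delicate point is to do this while \emph{avoiding} the unrestricted conjunction identity $\Provable(\Obar{d}) \wedge \Provable(\Obar{d'}) = \Provable(\Obar{d \wedge d'})$, whose use in the case $d' = \neg d$ would collide with the non-provability of consistency; isolating the precise fragment of $\omega$-consistency that secures the descent without tipping into that collision is the heart of the argument.
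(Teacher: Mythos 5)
Your treatment of clause (1), clause (2), the preservation half of clause (3), and the displayed inequality $\Provable(\Obar{D}) \leq \neg\Provable(\Obar{\neg D})$ all match the paper's proof: tautologies are theorems, hence provably provable; completeness gives one disjunct equal to $\top$; and $d \inconst d' \Rightarrow \Provable(\DObar{d}) \inconst \Provable(\DObar{d'})$ is obtained exactly as the paper does it, by lifting $d \leq \neg d'$ through the first two Hilbert--Bernays--L\"ob conditions and then applying the $\omega$-consistency guarantee $\Provable(\Obar{\neg d'}) \leq \neg\Provable(\Obar{d'})$. (Your shortcut of getting the final inequality directly from the $\omega$-consistency guarantee with $Q = \neg D$, rather than as a corollary of preservation applied to $A = D$, $B = \neg D$, is a harmless variation.)

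The genuine gap is the reflection half, which you explicitly leave unproved: you correctly identify that $\Provable(\DObar{d}) \inconst \Provable(\DObar{d'}) \Rightarrow d \inconst d'$ is the remaining obligation, but you only speculate that some ``descent'' or soundness-like fragment of $\omega$-consistency must secure it, without producing an argument. The paper closes this direction by a quite different, and much shorter, move that uses no descent and no $\omega$-consistency at all: it reads the first derivability condition internally as giving $A \leq \Provable(\DObar{A})$ for every $A \in \L_0$, whence
\begin{equation*}
    A \wedge B \ \leq\ \Provable(\DObar{A}) \wedge \Provable(\DObar{B}) \ =\ \bot ,
\end{equation*}
so $A \wedge B = \bot$ because $\bot$ is the bottom of $\L_0$. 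In other words, the paper reflects inconsistency downward by an \emph{ascent} applied to each conjunct separately, the opposite of the mechanism you were hunting for. Your instinct that this step is delicate is not misplaced --- the internal inequality $A \leq \Provable(\DObar{A})$ is a formalized-completeness principle that is stronger than the meta-level statement of the first derivability condition --- but as submitted your proof of clause (3) establishes only one of the two implications packaged in ``consistency-respecting'' and is therefore incomplete.
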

\begin{proof}
    See Appendix \ref{appendix:result-seciton-proofs}.\\
\end{proof}

We also note the following useful lemma of classical logic written in terms of Lindenbaum Algebras.
\begin{restatable}{lemma}{lemmaclassical}\label{lemma:classical}
    For $A, B \in \L_0$: $A \wedge B = \bot \iff A \leq \neg B$.
\end{restatable}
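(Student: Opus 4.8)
The plan is to reduce the algebraic equation to statements about provability and then dispatch each direction with elementary classical reasoning. Recall from Definition~\ref{definition:lindenbaum-algebra-def} and the subsequent discussion of Algebraic Logic that in $\L_0$ the relation $A \leq B$ means $\Theory \vdash A \rightarrow B$, that $\bot$ is the equivalence class of contradictions, and that $\wedge$ and $\neg$ are the meet and complement induced by logical conjunction and negation. In particular, $A \wedge B = \bot$ is equivalent to $\Theory \vdash \neg(A \wedge B)$, and $A \leq \neg B$ is equivalent to $\Theory \vdash A \rightarrow \neg B$. So the lemma is really the single propositional equivalence $\Theory \vdash \neg(A \wedge B) \leftrightarrow (A \rightarrow \neg B)$, recast at the level of the Lindenbaum Algebra.

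First I would prove the forward direction. Assuming $A \wedge B = \bot$, I have $\Theory \vdash (A \wedge B) \rightarrow \bot$; currying this (applying the deduction theorem twice) yields $\Theory \vdash A \rightarrow (B \rightarrow \bot)$, which is exactly $\Theory \vdash A \rightarrow \neg B$, i.e. $A \leq \neg B$. For the converse, assuming $A \leq \neg B$ I have $\Theory \vdash A \rightarrow \neg B$; conjoining the antecedent $A \wedge B$ derives both $\neg B$ (from $A$) and $B$, hence $\bot$ by the principle of explosion, so $\Theory \vdash (A \wedge B) \rightarrow \bot$. This shows $A \wedge B \leq \bot$, and since $\bot$ is the bottom element of $\L_0$ the reverse inequality is automatic, whence $A \wedge B = \bot$.

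Alternatively, and more in keeping with the Boolean-algebra viewpoint emphasised in Section~\ref{subsection:background-algebraic-logic}, one can argue purely algebraically: in any Boolean algebra $A \leq C$ holds if and only if $A \wedge \neg C = \bot$, and applying this with $C = \neg B$ together with the involution law $\neg \neg B = B$ gives $A \leq \neg B \iff A \wedge \neg \neg B = \bot \iff A \wedge B = \bot$. I expect no genuine obstacle here; the only point requiring care is that the double-negation law (and the symmetric meet/complement equivalence) relies on classical rather than intuitionistic logic, which is precisely the setting the paper has fixed. The proof is thus a routine unwinding of the definitions of $\leq$, $\wedge$, $\neg$ and $\bot$ in $\L_0$.
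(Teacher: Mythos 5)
Your proof is correct, but it takes a different route from the paper's. The paper stays entirely inside the Boolean algebra $\L_0$: assuming $A \wedge B = \bot$, it computes $A = A \wedge \top = A \wedge (B \vee \neg B) = (A \wedge B) \vee (A \wedge \neg B) = A \wedge \neg B$ using distributivity, and then invokes the meet-semilattice characterisation $A = A \wedge \neg B \iff A \leq \neg B$. Your primary argument instead descends to the level of provability, reading $A \wedge B = \bot$ as $\Theory \vdash (A \wedge B) \rightarrow \bot$ and dispatching the two directions with currying and the principle of explosion; this is more syntactic but equally valid, and it has the merit of explicitly proving \emph{both} implications, whereas the paper's written derivation only establishes the forward direction $A \wedge B = \bot \implies A \leq \neg B$ (the converse, which the paper does use, e.g.\ in the reflection half of Proposition \ref{proposition:provability-predicate}-3, is left implicit there even though it is immediate from $A \leq \neg B \implies A \wedge B \leq \neg B \wedge B = \bot$). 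Your ``alternative'' algebraic route is weaker as a proof: the identity $A \leq C \iff A \wedge \neg C = \bot$ is essentially the lemma itself up to the substitution $C = \neg B$ and double negation, so citing it as a known Boolean-algebra fact comes close to assuming the conclusion; if you want the algebraic version, the paper's distributivity computation is the honest way to derive it. Your one point of caution --- that the argument is classical rather than intuitionistic --- is well placed and matches the paper's standing assumption.
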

\begin{proof}
    See Appendix \ref{appendix:result-seciton-proofs}.\\
\end{proof}

Finally, we prove Gödel's Incompleteness Theorem in a manner it immediately follows that if $(\NGodelvar, \app)$ has a consistency-respecting provability predicate, it is incomplete.

\parsplit

\begin{theorem}[Gödel's Incompleteness Theorem]\label{theorem:godel-main}
    An $\omega$-consistent theory of Arithmetic Logic cannot be complete.
\end{theorem}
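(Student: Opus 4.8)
The plan is to argue by contradiction: assume $\Theory$ is both $\omega$-consistent and complete, construct an abstract Gödel sentence via the fixed-point machinery already in place, and show that its truth value in $\L_0$ cannot be reconciled with the two hypotheses. First I would invoke Theorem \ref{theorem:arithmetic-logic-fixed-points}: since $(\NGodelvar, \comp, \emb)$ satisfies the fixed-point property for all expressions in $\L_1$ by fixed points lying in $\L_0$, applying it to the predicate $\neg\Provable(x) \in \L_1$ produces a sentence $\G \in \L_0$ with $\neg\Provable(x) \ast \G = \G$, i.e.\ $\neg\Provable(\DObar{\G}) = \G$ in $\L_0$. Negating both sides in the Boolean algebra $\L_0$ gives the working identity $\Provable(\DObar{\G}) = \neg\G$, the abstract analogue of the classical Gödel sentence.

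Next I would record a \emph{necessitation} step derived from Proposition \ref{proposition:provability-predicate}(1). Because $\NGodelvar$, and hence $\DObarvar$, is well defined on equivalence classes (Proposition \ref{proposition:well-defined}), any $D$ with $D = \top$ in $\L_0$ satisfies $\DObar{D} = \DObar{\top}$, so $\Provable(\DObar{D}) = \Provable(\DObar{\top}) = \top$. Completeness then decides $\G$: since every sentence is either provable or refutable, either $\G = \top$ or $\G = \bot$ in $\L_0$. The rest of the argument eliminates both cases, after which the contradiction is immediate.

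In the case $\G = \top$, necessitation gives $\Provable(\DObar{\G}) = \top$, whereas the Gödel identity gives $\Provable(\DObar{\G}) = \neg\G = \bot$, forcing $\top = \bot$ and contradicting consistency (which $\omega$-consistency entails). In the case $\G = \bot$, the identity gives $\Provable(\DObar{\G}) = \neg\G = \top$, while necessitation applied to $\neg\G = \top$ gives $\Provable(\DObar{\neg\G}) = \top$. Here I would invoke Proposition \ref{proposition:provability-predicate}(3) — the consequence of $\omega$-consistency that $\Provable(\DObar{\G}) \leq \neg\Provable(\DObar{\neg\G})$ — to obtain $\top \leq \neg\top = \bot$, so again $\top = \bot$. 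Since completeness forces one of the two cases and both are impossible, no theory can be simultaneously $\omega$-consistent and complete; equivalently, if $(\NGodelvar, \app)$ has a consistency-respecting $\Provable$ it must be incomplete. Where convenient I would use Lemma \ref{lemma:classical} to translate freely between $\wedge$-inconsistency and $\leq$-statements.

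The genuinely delicate step is the case $\G = \bot$. Plain consistency does not rule it out, because $\Provable(\DObar{\G}) = \top$ asserts only that the \emph{provability} of $\G$ is a theorem, not that $\G$ is actually provable; the two are conflated only under a reflection principle that need not hold. Bridging this gap is exactly the function of $\omega$-consistency, repackaged here as the consistency-respecting property of $\Provable$ in Proposition \ref{proposition:provability-predicate}(3), and it is precisely the point at which weakening the hypothesis to mere consistency would instead demand Rosser's trick. Everything else reduces to bookkeeping in the two complementary elements $\Provable(\DObar{\G})$ and $\Provable(\DObar{\neg\G})$ of the Boolean algebra $\L_0$.
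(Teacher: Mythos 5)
Your proof is correct and reaches essentially the same final contradiction as the paper (necessitation gives $\Provable(\DObar{\neg\G}) = \top$, which together with the $\omega$-consistency inequality of Proposition~\ref{proposition:provability-predicate}-3 and the fixed-point identity forces $\top = \bot$), but it gets there by a genuinely different decomposition. The paper never collapses $\L_0$: it uses only the algebraic consequence of completeness recorded in Proposition~\ref{proposition:provability-predicate}-2, namely $\neg\Provable(\DObar{\G}) \wedge \neg\Provable(\DObar{\neg\G}) = \bot$, and combines it with the $\omega$-consistency inequality in an order-theoretic squeeze (via Lemma~\ref{lemma:classical}) to pin down the value of $\G$ before deriving the contradiction. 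You instead invoke the full strength of completeness --- every sentence is decided, so $\L_0$ is the two-element Boolean algebra --- and run an explicit case split on $\G \in \{\top, \bot\}$, with the $\G = \top$ case dispatched by necessitation and plain consistency alone, and the $\G = \bot$ case requiring $\omega$-consistency; your diagnosis that this second case is exactly where $\omega$-consistency (or Rosser's trick) becomes indispensable is correct and matches the paper's own remarks in Section~\ref{subsection:background-godel}. Your route is shorter and more transparent as a proof of Theorem~\ref{theorem:godel-main} in isolation. What the paper's more laborious route buys is that it manipulates only the lattice-theoretic identities that are later abstracted into quasi-consistency and quasi-completeness (Definition~\ref{definition:quasi-defs}) over arbitrary orthocomplemented lattices in Section~\ref{subsection:argument}; your two-element-algebra step is a consequence of completeness specific to $\L_0$ with no analogue in the Social Choice instantiation, so it could not be reused for Theorems~\ref{theorem:godel-abstract}--\ref{theorem:dictator-abstract-2}.
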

\begin{proof}
    \allowdisplaybreaks
    By Theorem \ref{theorem:arithmetic-logic-fixed-points} there exists a fixed-point $\G = \neg \Provable(\DObar{\G})$ in $\L_0$. Then, by our theory being $\omega$-consistent, Proposition \ref{proposition:provability-predicate}-3 implies that $\Provable(\DObar{\G}) \leq \neg \Provable(\DObar{\neg \G})$. Combining the two results, and negating both sides, we have that:
    \begin{equation}\label{equation:squeeze-1}
        \neg \G \leq \neg \Provable(\DObar{\neg \G})
    \end{equation}

    \parsplit

    \noindent Now, assume to the contrary that the theory is complete, this implies that:
    {\small
    \begin{align*}
        \bot  \quad  = \quad & \neg \Provable(\DObar{\G}) \wedge \neg \Provable(\DObar{\neg \G}) &  & \text{Proposition \ref{proposition:provability-predicate}-2} \\
        \quad  = \quad       & \G \wedge \neg \Provable(\DObar{\neg \G})                         &  & \text{Definition of $\G$}
    \end{align*}
    }
    Then, by Lemma \ref{lemma:classical} this is equivalent to the inequality:
    \begin{equation}\label{equation:squeeze-2}
        \neg \Provable(\DObar{\neg \G}) \leq \neg \G
    \end{equation}
    Combining equations (\ref{equation:squeeze-1}) and (\ref{equation:squeeze-2}) we have that $\G = \neg \Provable(\DObar{\neg \G})$. However, re-examining the completeness property, we have that $\bot = \G \wedge \G = \G$, which produces the following contradiction:
    {\small
    \begin{align*}
        \top  \quad  = \quad & \Provable(\DObar{\ \top\ }) &  & \text{Proposition \ref{proposition:provability-predicate}-1} \\
        \quad  = \quad       & \Provable(\DObar{\neg \G})  &  & \text{$\G = \bot$}                                           \\
        \quad  \leq \quad    & \neg \Provable(\DObar{ \G}) &  & \text{Proposition \ref{proposition:provability-predicate}-3} \\
        \quad  = \quad    & \G = \bot                   &  & \text{Definition of $\G$}
    \end{align*}
    }
    i.e., $\top = \bot$. Hence, the theory cannot be complete.\\
\end{proof}

\begin{theorem}\label{theorem:godel-new}
    If a theory Arithmetic Logic has a consistency-respecting provability predicate with respect to the subset of valid elements $\L_0 \setminus \{\bot\}$ and $\wedge$ then it is incomplete.
\end{theorem}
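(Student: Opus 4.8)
The plan is to re-run the proof of Gödel's Incompleteness Theorem (Theorem \ref{theorem:godel-main}), replacing the hypothesis of $\omega$-consistency with the directly assumed property that $\Provable(x)$ is consistency-respecting. The crucial observation is that $\omega$-consistency enters the proof of Theorem \ref{theorem:godel-main} \emph{only} through Proposition \ref{proposition:provability-predicate}-3, whose content is the two-step implication that $\omega$-consistency makes $\Provable(x)$ consistency-respecting, which in turn yields the inequality $\Provable(\DObar{D}) \leq \neg\Provable(\DObar{\neg D})$ for all $D \in \L_0$. Taking the middle property as our hypothesis bypasses the first implication, so I need only reproduce the second.

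The key step is thus to extract the inequality from the consistency-respecting hypothesis alone. First I would invoke Theorem \ref{theorem:arithmetic-logic-fixed-points} to obtain the Gödel sentence $\G \in \L_0$ satisfying $\G = \neg\Provable(\DObar{\G})$. In the sense of Example \ref{example:godel-consistent-pairs} the pair $\G, \neg\G$ is inconsistent, since $\G \wedge \neg\G = \bot \notin \L_0 \setminus \{\bot\}$, i.e.\ $\G \inconst \neg\G$. Applying Definition \ref{definition:consistency-respecting} to $\Provable(x)$ then gives $\Provable(\DObar{\G}) \inconst \Provable(\DObar{\neg\G})$, that is $\Provable(\DObar{\G}) \wedge \Provable(\DObar{\neg\G}) = \bot$, and Lemma \ref{lemma:classical} converts this identity into $\Provable(\DObar{\G}) \leq \neg\Provable(\DObar{\neg\G})$ (and, reading the same identity the other way, $\Provable(\DObar{\neg\G}) \leq \neg\Provable(\DObar{\G})$). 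With these two inequalities in hand the argument of Theorem \ref{theorem:godel-main} carries over unchanged: assume completeness, apply Proposition \ref{proposition:provability-predicate}-2 and the tautology identity of Proposition \ref{proposition:provability-predicate}-1, and reach the contradiction $\top = \bot$.

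The main obstacle is the legitimacy of applying consistency-respecting to the pair $(\G, \neg\G)$, which Definition \ref{definition:consistency-respecting} only guarantees for valid elements $d, d' \in \L_0 \setminus \{\bot\}$; this demands $\G \neq \bot$ and $\G \neq \top$. I would settle $\G \neq \top$ at once: if $\G = \top$ then $\G = \neg\Provable(\DObar{\top}) = \neg\top = \bot$ by Proposition \ref{proposition:provability-predicate}-1, contradicting $\top \neq \bot$. For $\G \neq \bot$, observe that $\G = \bot$ would force $\Provable(\DObar{\bot}) = \top$, so the theory would prove falsehood and hence everything, making $\Provable(\DObar{d}) = \top$ for every $d$; but then any inconsistent pair of contingent valid propositions $C \inconst \neg C$ would satisfy $\Provable(\DObar{C}) \const \Provable(\DObar{\neg C})$, contradicting the consistency-respecting hypothesis. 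Hence that hypothesis itself places $\G$ and $\neg\G$ in $\L_0 \setminus \{\bot\}$, validating the key step.
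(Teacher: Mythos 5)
Your overall route is the paper's: the only place $\omega$-consistency enters the proof of Theorem \ref{theorem:godel-main} is through the inequality $\Provable(\DObar{D}) \leq \neg\Provable(\DObar{\neg D})$, and the task is to recover that inequality from the consistency-respecting hypothesis instead. You are also right to flag the domain issue that the paper's one-line proof passes over: Definition \ref{definition:consistency-respecting} only constrains pairs of \emph{valid} elements, whereas the appendix proof of Proposition \ref{proposition:provability-predicate}-3 derives the inequality from the biconditional extended to all of $\L_0$ (obtained there from $\omega$-consistency together with the derivability conditions, not from consistency-respecting alone). So the obstacle you identify is genuine.

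The gap is in your resolution of that obstacle, specifically the argument that $\G \neq \bot$. First, $\Provable(\DObar{\bot}) = \top$ means the theory proves the sentence ``$\bot$ is provable''; it does not mean the theory proves $\bot$, so ``the theory would prove falsehood and hence everything'' is a non sequitur (the conclusion $\Provable(\DObar{d}) = \top$ for all $d$ can still be reached, but via derivability condition 2 applied to $\Theory \vdash \bot \rightarrow d$). Second, and fatally, the contradiction you then draw invokes ``any inconsistent pair of contingent valid propositions $C \inconst \neg C$'' --- that is, a sentence with $C, \neg C \in \L_0 \setminus \{\bot\}$, which is exactly an independent sentence, i.e.\ exactly the incompleteness being proven. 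If the theory were complete and consistent, then $\L_0 \setminus \{\bot\} = \{\top\}$, no such $C$ exists, and consistency-respecting degenerates to the true statement $\Provable(\DObar{\top}) \neq \bot$; your argument for $\G \neq \bot$ then yields nothing, precisely in the case the subsequent completeness-refutation needs it. (Note also that had you validly shown $\G \neq \top$ and $\G \neq \bot$, the theorem would already follow, since $\G$ would be independent, making the re-run of Theorem \ref{theorem:godel-main} redundant.) To close the gap you must obtain the inequality for all $D \in \L_0$, including the endpoints, the way the appendix actually does --- through the derivability conditions --- rather than from the consistency-respecting property restricted to valid elements.
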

\begin{proof}
    In the proof of Theorem \ref{theorem:godel-main}, the only aspect of $\omega$-consistency used the identity $\forall D \in \L_0:\ \Provable(\Obar{D}) \leq \neg \Provable(\Obar{\neg D})$, which follows from $\Provable(x)$ being consistency respecting (see Proposition \ref{proposition:provability-predicate}-3).\\
\end{proof}

To demonstrate impossibility in Social Choice Theory in the same terms, we proceed by showing Arrow's Impossibility Theorem is equivalent to the statement that Social Welfare Functions satisfying IIA, Unanimity and Non-Dictatorship necessarily produce a contradictory preference cycle. Then, we show that no Self-Reference System using that Social Welfare Function as its encoding can have a consistency-respecting expression due to its aggregating of profiles to contradictory preference cycles.

\parsplit
\begin{restatable}[Arrow's Impossibility Theorem]{theorem}{arrowfull}
    \label{theorem:arrow-full}
    If a Social Welfare Function $\Welfare: \PCN \rightarrow \PC$ satisfies Unanimity, IIA and Non-Dictatorship then there exist profiles $q, q' \in \PN$ such that:
    \begin{enumerate}
        \item $\Welfare(q) = \Welfare(q') = \c$
        \item $q \inconst q'$
    \end{enumerate}
\end{restatable}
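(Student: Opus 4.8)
The plan is to split the argument into two stages: first produce a single valid profile that aggregates to the contradictory cycle $\c$, and then manufacture a second valid profile that also aggregates to $\c$ but is inconsistent with the first. For the first stage I would invoke the equivalence, discussed in Section \ref{subsection:condorcet-paradox} and established by D'Antoni and by Livson and Prokopenko \cite{dantoni,paper0-arxiv}, between Arrow's Theorem and the Condorcet generalisation: since $\Welfare$ restricted to $\PN$ satisfies Unanimity, IIA and Non-Dictatorship, there is a profile $q \in \PN$ with $\Welfare(q) = \c$, so that the pairwise aggregate contains a contradictory $3$-cycle such as $a \prec b \prec c \prec a$. This gives the first half of item~1 and is essentially just a restatement of the cited result; the genuinely new content is the inconsistent second profile.

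For the second stage I would introduce the \emph{strict-reversal} operation $\rho \colon \P \to \P$ that flips every strict preference of a weak order while leaving indifferences fixed, extended coordinate-wise to profiles, and set $q' \coloneqq \rho(q)$. It is important to distinguish $\rho$ from the orthocomplement $\neg$ of Section \ref{subsection:encodings}: one has $\rho(r) = \neg r$ for every weak order $r \neq \i$, but $\rho(\i) = \i$ whereas $\neg \i = \c$, so only $\rho$ is guaranteed to keep $\rho(q)$ inside $\PN$. The core step is a reversal-duality argument showing that, for each pair of alternatives, the aggregate outcome of $\rho(q)$ is the reverse of that of $q$. By IIA the outcome on a pair $\{x,y\}$ depends only on the individual preferences restricted to $\{x,y\}$; the field-expansion (contagion) lemma, valid because there are at least three alternatives, forces this per-pair rule to be common to all pairs and hence neutral, so it commutes with swapping $x$ and $y$. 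Applying this to $q$ shows $\rho(q)$ aggregates to the reversed cycle $a \prec c \prec b \prec a$, which is again a contradictory cycle, whence $\Welfare(q') = \c$ and item~1 holds.

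For item~2 I would argue $q \inconst q'$ directly. Since $\Welfare(q) = \c \neq \i$, the profile $q$ cannot be totally indifferent (a totally indifferent profile aggregates to $\i$ by neutrality), so some individual $i$ has a strict preference $x \prec y$ in $q_i$. Then $\rho(q_i)$ contains $y \prec x$, so the greatest lower bound $q_i \wedge q'_i$ would have to contain both $x \prec y$ and $y \prec x$ and therefore equals $\c$. By the coordinate-wise definition of consistency for profiles in Example \ref{example:soc-me}, this single inconsistent coordinate yields $q \inconst q'$, completing item~2.

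The hard part will be the reversal-duality (neutrality) step: it is precisely the content of the classical field-expansion lemma and is where the hypothesis of at least three alternatives is spent, so I expect either to cite this lemma from the standard Arrow literature or from \cite{paper0-arxiv}, or to reprove it in the appendix. A secondary but easily-overlooked subtlety is the need to work with $\rho$ rather than the orthocomplement $\neg$, so that the reversed profile $q'$ remains a genuine profile of weak orders in $\PN$ rather than acquiring a $\c$ coordinate.
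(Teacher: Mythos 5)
Your first stage coincides with what the paper actually does: the paper's entire proof is a one-line appeal to \cite[Theorem 4.3.4]{paper0-arxiv}, which already supplies \emph{both} profiles $q, q'$ with $\Welfare(q)=\Welfare(q')=\c$ and $q \wedge q' = (\c,\dots,\c)$. The genuine gap is in your second stage. The reversal-duality you rely on --- that the aggregate of $\rho(q)$ is the pairwise reversal of the aggregate of $q$ --- is not a consequence of Unanimity, IIA and Non-Dictatorship. The field-expansion/neutrality lemma you propose to cite is proved under Unrestricted Domain (the contagion argument needs the aggregate to be a transitive weak order on the profiles it manipulates), and Unrestricted Domain is exactly the hypothesis that fails in this theorem; the paper itself remarks after Theorem \ref{theorem:dictator-abstract-2} that neutrality is obtained from IIA \emph{together with} Unrestricted Domain and Pareto indifference.

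Concretely, the step is false. Take $N=2$, $\A=\{a,b,c\}$, and define $\Welfare$ pairwise by: on $\{a,b\}$ the aggregate is $b \prec a$ if both individuals have $b \prec a$, is $a \sim b$ if both have $a \sim b$, and is $a \prec b$ otherwise; on $\{b,c\}$ and $\{a,c\}$ the aggregate copies individual $1$. This satisfies Unanimity and IIA by construction, and Non-Dictatorship because individual $1$ is overruled on $\{a,b\}$ in mixed profiles and individual $2$ is overruled on the other pairs. For $q=(b \prec c \prec a,\ a \prec b \prec c)$ the aggregate is $a \prec b$, $b \prec c$, $c \prec a$, so $\Welfare(q)=\c$; but for $\rho(q)=(a \prec c \prec b,\ c \prec b \prec a)$ the pair $\{a,b\}$ is still mixed and defaults to $a \prec b$, giving the aggregate $a \prec b$, $c \prec b$, $a \prec c$, i.e.\ the genuine weak order $a \prec c \prec b$ rather than $\c$. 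So $q' \coloneqq \rho(q)$ need not satisfy item~1. (A smaller secondary gap of the same kind: your argument that $q$ cannot be totally indifferent invokes Pareto indifference, which is not assumed.) The existence of a \emph{second}, mutually inconsistent cycle-producing profile is real content of the cited Theorem 4.3.4 and cannot in general be manufactured from a single such profile by strict reversal.
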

\begin{proof}
    Identifying all preference cycles by a single $\c$ in \cite[Theorem 4.3.4]{paper0-arxiv}, we have that $\exists q, q' \in \PN$ such that $\Welfare(q) = \Welfare(q') = \c$ and $q \wedge q' = (\c, \dots, \c)$, which in turn implies that $q \inconst q'$.\\
\end{proof}

These conditions further imply the following impossibility result:

\parsplit
\begin{theorem}\label{theorem:arrow-main}
    If $\Welfare: \PCN \rightarrow \PC$ is a Social Welfare Function satisfying Unanimity and IIA and Non-Dictatorship, then no Self-Reference System $(\Welfare, \app)$ has a consistency-respecting expression with respect to $\PN \subseteq \PCN \setminus \{\c,\dots,\c\}$ and $\bwedge$.
\end{theorem}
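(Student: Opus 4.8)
The plan is to derive a contradiction from the \emph{lossiness} of the encoding $\Welfare$, exploiting the fact that the two profiles guaranteed by Arrow's Impossibility Theorem both encode to the single element $\c$. First I would invoke Theorem \ref{theorem:arrow-full}: since $\Welfare$ satisfies Unanimity, IIA and Non-Dictatorship, there exist profiles $q, q' \in \PN$ with $\Welfare(q) = \Welfare(q') = \c$ and $q \inconst q'$. I would then suppose, toward a contradiction, that some Self-Reference System $(\Welfare, \app)$ admits a consistency-respecting expression $\T$ with respect to the valid elements $\PN$ and $\bwedge$ (Example \ref{example:soc-me}).

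The crucial step is the observation that application collapses $q$ and $q'$. Since $\enc = \Welfare$ and $\Welfare(q) = \Welfare(q') = \c$, we have, for \emph{any} application function whatsoever,
\[
\T \ast q = \app(\T, \Welfare(q)) = \app(\T, \c) = \app(\T, \Welfare(q')) = \T \ast q'.
\]
Setting $e \coloneqq \T \ast q = \T \ast q'$, I would now play the two halves of the consistency-respecting biconditional (Definition \ref{definition:consistency-respecting}) against one another. On one hand, $q \inconst q'$ forces $\T \ast q \inconst \T \ast q'$, that is $e \inconst e$; since inconsistency means $e \bwedge e \notin \PN$ and $\bwedge$ is idempotent (coordinate-wise meet satisfies $r \wedge r = r$), this yields $e \notin \PN$. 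On the other hand, consistency is reflexive on valid elements: because $q \in \PN$ and $q \bwedge q = q \in \PN$, we have $q \const q$, so the biconditional gives $\T \ast q \const \T \ast q$, i.e. $e \in \PN$. These two conclusions are contradictory, so no such $\T$ can exist.

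The argument is short, and its only real subtlety --- the point I would flag as the conceptual crux rather than a technical obstacle --- is recognising that the lossiness of $\Welfare$ (both $q$ and $q'$ being sent to the single contradictory cycle $\c$) is precisely what makes $\T \ast q$ and $\T \ast q'$ literally the same expression, independently of the choice of $\app$. This is the structural feature with no counterpart in lossless Gödel numbering, and it is what allows the inconsistency of the pair $(q, q')$ to clash with the reflexive consistency of $q$ with itself. Everything else (idempotence of $\bwedge$ and the characterisation of $\PN$ as the valid elements) is routine.
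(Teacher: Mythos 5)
Your proposal is correct and follows essentially the same route as the paper: invoke Theorem \ref{theorem:arrow-full}, observe that $\Welfare(q)=\Welfare(q')=\c$ collapses $\T \ast q$ and $\T \ast q'$ for any $\app$, and then use idempotence of $\bwedge$ together with the consistency-respecting biconditional to reach a contradiction. The only (immaterial) difference is that you apply the forward direction of the biconditional to the pair $(q,q)$ to get $\T \ast q \const \T \ast q$ directly, whereas the paper applies the reverse direction to infer $q \inconst q$ and then contradicts $q \in \PN$.
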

\begin{proof}
    Assume to the contrary that $\Welfare$ does not have a dictator but that there exists a Self-Reference System $(\Welfare, \app)$ with a consistency-respecting $\T$. By Theorem \ref{theorem:arrow-full}: $\exists\ q, q' \in \PN$ such that $q \inconst q'$ and $\Welfare(q) = \Welfare(q') = \c$. By $\T$ being consistency-respecting, $q \inconst q' \implies \T \ast q \inconst \T \ast q'$. Then, because:
    \begin{equation*}
        \T \ast q = \app(\T, \Welfare(q)) = \app(\T, \c) = \app(\T, \Welfare(q')) = \T \ast q'
    \end{equation*}
    we can conclude that $\T \ast q$ must be inconsistent with itself, i.e., $\T \ast q \inconst \T \ast q$. However, $\T$ being consistency-respecting implies that $q$ is inconsistent with itself as well. However, if $q$ is inconsistent with itself, then for some coordinate $j$: $q_j \wedge q_j = \c$, which implies that $q_j = \c$, which contradicts that $q \in \PN$.\\
\end{proof}

\begin{proposition}\label{proposition:strong-dictator}
    If a Social Welfare Function $\Welfare: \PCN \rightarrow \PC$ has a strong dictator at $i$ (Definition \ref{definition:strong-dictator}) then $(\i, \dots, \i) \in \PN$ is consistency respecting in the Self-Reference System $(\Welfare, \wedge_i)$ with respect to $\bwedge$.
\end{proposition}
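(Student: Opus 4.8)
The plan is to produce the all-indifferent profile $u \coloneqq (\i,\dots,\i)$ as the witnessing consistency-respecting expression and to reduce the entire statement to a condition on the dictator's coordinate $i$. The engine of the argument is the identity $\Welfare(p) = p_i$ for every valid profile $p \in \PN$, which I would first extract from the strong-dictator hypothesis. Since individual $i$ is a vetoer, the dual of Proposition \ref{proposition:dictator-equivalence} gives $\Welfare(p) \wedge p_i = p_i$, i.e.\ $p_i \leq \Welfare(p)$; as $p_i \neq \c$ and $\c$ is the bottom of $(\PC,\leq)$, this already forces $\Welfare(p) \in \P$. Since $i$ is also a dictator, Proposition \ref{proposition:dictator-equivalence} gives $\Welfare(p) \join p_i = p_i$, and because $\Welfare(p) \in \P$ the join $\join$ is the least upper bound $\vee$, so this reads $\Welfare(p) \leq p_i$. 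Combining the two inequalities and using antisymmetry of $\leq$ yields $\Welfare(p) = p_i$, matching the informal reading of Definition \ref{definition:strong-dictator}.

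Next I would compute how $u$ acts under $\ast$ in the system $(\Welfare,\wedge_i)$. By Note \ref{note:ast-shorthand} and the definition of $\wedge_i$ in Example \ref{example:arrow-1}, $u \ast q = \wedge_i(u,\Welfare(q))$ alters only coordinate $i$, giving $(\i,\dots,\i \wedge \Welfare(q),\dots,\i)$. Because $\i$ is the top element of $(\P,\leq)$ we have $\i \wedge r = r$ for all $r$, and substituting $\Welfare(q) = q_i$ collapses this to $u \ast q = (\i,\dots,q_i,\dots,\i)$. In words, $\ast$-application merely relocates the dictator's individual preference into an otherwise-indifferent profile.

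I would then analyse consistency with respect to $\bwedge$ and the valid set $\PN$ of Example \ref{example:soc-me}. Taking coordinatewise meets, $(u \ast q) \bwedge (u \ast q')$ is $\i$ in every coordinate except the $i$-th, where it equals $q_i \wedge q'_i$. Since $\i \neq \c$, this profile lies in $\PN$ exactly when $q_i \wedge q'_i \neq \c$, i.e.\ exactly when $q_i \const q'_i$. Hence $u \ast q \const u \ast q'$ if and only if $q_i \const q'_i$, and in particular $u \ast q$ is always self-consistent for $q \in \PN$. The preservation half of consistency-respecting is then immediate: if $q \const q'$ then all coordinates agree consistently, so a fortiori $q_i \const q'_i$, giving $u \ast q \const u \ast q'$.

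The main obstacle is the reflection half of Definition \ref{definition:consistency-respecting}, namely recovering $q \const q'$ from $u \ast q \const u \ast q'$, which by the reduction above amounts to inferring consistency of the whole profile from consistency at coordinate $i$ alone. This is the delicate point, since $u \ast q$ retains only the dictator's coordinate; I would therefore concentrate the verification here, making explicit how the valid-element structure under the strong dictator constrains the remaining coordinates so that consistency is governed by coordinate $i$. Isolating the exact hypotheses that make this reflection tight is where I expect the real work to lie, and I would treat the preceding computations as the routine scaffolding around it.
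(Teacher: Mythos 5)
Your reduction follows the paper's route exactly: compute $(\i,\dots,\i) \ast q = (\i,\dots,\i \wedge \Welfare(q),\dots,\i) = (\i,\dots,\Welfare(q),\dots,\i)$, substitute $\Welfare(q) = q_i$ from the strong-dictator hypothesis, and observe that consistency-respecting collapses to the biconditional $q \const q' \iff q_i \const q'_i$. Your derivation of $\Welfare(p) = p_i$ via Proposition \ref{proposition:dictator-equivalence} and its dual is more explicit than the paper's, which reads it off Definition \ref{definition:strong-dictator} directly, and your preservation half is correct. But you stop short of the reflection half, promising only to investigate how the strong-dictator structure constrains the coordinates other than $i$; as submitted, the proposal is not a complete proof.

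The step you flagged is, however, exactly the right place to stop and worry: reflection cannot be established, because it is false for $N \geq 2$. Take $i = 1$, any weak order $r$, a strict order $s$, and set $q = (r, s, \dots)$ and $q' = (r, \neg s, \dots)$ with the remaining coordinates equal; both lie in $\PN$. Then $u \ast q = u \ast q' = (r, \i, \dots, \i)$, so trivially $u \ast q \const u \ast q'$, yet $q \bwedge q'$ has $s \wedge \neg s = \c$ in its second coordinate, so $q \inconst q'$. Hence $(\i,\dots,\i)$ preserves consistency but does not reflect it, and Definition \ref{definition:consistency-respecting} demands both. The paper's own proof simply asserts that the biconditional $p \const p' \iff \Welfare(p) \const \Welfare(p')$ holds once $\Welfare(p) = p_i$ is substituted, and never addresses the direction you isolated; so what you have found is not a missing lemma you failed to supply but a genuine defect in the statement as written --- only the preservation direction survives unless consistency is redefined to depend on coordinate $i$ alone.
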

\begin{proof}
    $\T_i$ is consistency respecting in this case if and only if for $p, p' \in \PN$:
    \begin{equation}\label{equation:strong-dictator}
        p \const p' \iff (\i, \dots, \i) \ast p \const (\i, \dots, \i) \ast p'
    \end{equation}
    and $(\i, \dots, \i) \ast p = (\i, \dots, i \wedge \Welfare(p), \dots, \i) = (\i, \dots, \Welfare(p), \dots, \i)$. Thus, $\T_i$ is consistency respecting if $p \const p' \iff \Welfare(p) \const \Welfare(p')$. However, if $\Welfare$ has a strong dictator then $\Welfare(p) = p_i$, hence, Equation (\ref{equation:strong-dictator}) holds.\\
\end{proof}
The above result does not hold for ordinary dictators because while in that case $\Welfare(p) \const \Welfare(p') \implies p \const p'$, the converse does not hold because where a dictator is indifferent, the aggregate may introduce strict preferences. Hence, if $\Welfare(p) \const \Welfare(p')$ that does not necessarily mean that $p \inconst p'$.

In this section, we demonstrated that the existence or non-existence of consistency respecting expressions characterised the mutual exclusivity of $\omega$-consistency and completeness that underlies Gödel's Incompleteness Theorem (Theorem \ref{theorem:godel-new}), as well as some aspects of impossibility in Social Choice Theory (Theorem \ref{theorem:arrow-main} and Proposition \ref{proposition:strong-dictator}). However, a closer overlap between Arrow's Impossibility Theorem and Gödel's Incompleteness Theorem can be shown by investigating a different system of constraints abstracted out of our proof of Gödel's Incompleteness Theorem (Theorem \ref{theorem:godel-main}) that is the subject of the next section.

\subsection{\subsectiontextg}\label{subsection:argument}

In this section, we demonstrate overlaps between Arrow's Impossibility Theorem and Gödel's Incompleteness Theorem by abstracting constraints satisfied by a provability predicate in theory of Arithmetic Logic. For example, by Proposition \ref{proposition:provability-predicate}, if a Theory is complete then for any proposition $D \in \L_0$:
\begin{equation}\label{equation:motivation}
    \neg \Provable(\DObar{D}) \wedge \neg \Provable(\DObar{\neg D}) = \bot
\end{equation}
Moreover, Equation (\ref{equation:motivation}) can be formulated for any pair of expressions $(\T, d)$ in a Self-Reference System with a meet semi-lattice $\wedge$ on $\E$ with bottom element $\bot$ and complements $\neg$, i.e.:
\begin{equation}\label{equation:motivation-2}
    \neg \T \ast d \wedge \neg \T \ast \neg d = \bot
\end{equation}
where $\neg \T \ast d = \neg (\T \ast d)$. Because $\T$ does not necessarily represent a notion provability, we will say any pair of expressions $(\T, d)$ satisfying Equation (\ref{equation:motivation-2}) satisfies \term{quasi-completeness} to emphasise that this condition only represents completeness in its likeness to Equation (\ref{equation:motivation}).

\parsplit

A Self-Reference System is a \term{quasi-Gödelian system} with respect to an expression $\T$ that abstracts the role of a provability predicate, if there exists another expression $d$ such that the pair $(\T,d)$ jointly satisfies a number of abstract properties (e.g., quasi-completeness) in a manner related to Gödel's Incompleteness Theorem. Specifically, a quasi-Gödelian system abstracts the mutual exclusivity of consistency and completeness that arises from the existence of a Gödel sentence (abstractly, $d$) that is defined with respect to a provability predicate (abstractly, $\T$).

\parsplit

To apply this to Social Choice Theory, we fix the familiar Self-Reference System $(\Welfare, \Omega_i)$ (Example \ref{example:arrow-1}) and the $i^{th}$ indicator (i.e., an expression) $\T_i$ (Definition \ref{definition:indicator}) as abstracting the role of a provability predicate. Interestingly, we find that where Arrow's Impossibility Theorem implies that a non-dictatorial Welfare Function $\Welfare$ aggregates some profile (i.e., an expression) $q$ to a contradictory preference welfare cycle, we find that the pair $(\T_i,q)$ renders the Self-Reference System $(\Welfare, \Omega_i)$ quasi-Gödelian (Theorem \ref{theorem:condorcet-abstract}) with respect to $\T_i$. Moreover, if $\Welfare$ has a dictator at $i$ then $(\Welfare, \Omega_i)$ is never quasi-Gödelian with respect to $\T_i$ as for every profile $p$, $p$ is never abstractly a Gödel sentence with respect to $\T_i$ (Theorem \ref{theorem:dictator-abstract}) and the abstract versions of consistency and completeness are jointly satisfiable (Theorem \ref{theorem:dictator-abstract-2}).

\parsplit

We thus demonstrate that key properties underlying incompleteness in Arithmetic Logic characterise the structure of certain Social Welfare Functions that produce contradictory preference cycles (see Table \ref{table:overlap}).

\parsplit

Before proceeding, recall that an \term{orthocomplemented lattice} is a partially ordered set $(L, \leq)$ with all greatest lower bounds $\wedge$, least upper bounds $\vee$, top $\top$, bottom $\bot$, and a negation map that assigns to each $l \in L$ a $\neg l \in L$ such that $l \wedge \neg l = \bot$ and $l \vee \neg l = \top$\footnote{More specifically $\neg$ specifies an involution, i.e., additionally satisfying $\neg \neg l = l$ and $l \leq m \implies \neg m \leq \neg l$.}. In logic, Lindenbaum Algebras $\L_n$ are partially ordered by implication with respect to a theory (see Section \ref{subsection:background-algebraic-logic}), and orthocomplemented lattices with respect to conjunction, disjunction, true, false and negation, respectively. In Social Choice Theory, preference relations $\PCN$ can be ordered by strictness and orthocomplemented lattices with respect to $\wedge$, $\vee$, $\i$, $\c$ and $\neg$ as defined in Section \ref{subsection:encodings}, respectively. Unless otherwise specified, we use the symbols $\wedge$, $\vee$, $\top$, $\bot$ and $\neg$ to define an arbitrary orthocomplemented lattice; we can always recover its partial order, which must be definable from $\vee$ and $\wedge$ as $x \leq y \iff x \vee y = y \iff x \wedge y = x$~\cite{lattice-order-theory}.

\parsplit

A Quasi-Gödelian system with respect to an expression $\T$ is a Self-Reference System where another expression $d$ exists such that a number of constraints involving $\T$ and $d$ are satisfied. We first define the individual constraints that any pair of expressions $(\T, d)$ may or may not satisfy, in any combination.

\parsplit

\begin{definition}\label{definition:quasi-defs}
    Let $(\enc, \app)$ be a Self-Reference System whose expressions $\E$ and constants $\C$ are both orthocomplemented lattices, $\D \subseteq \E \setminus \bot$ a subset of valid elements (Definition \ref{definition:me}), and $d$ an expression in $\D$. We say that the pair $(\T, d)$ satisfies:
    \begin{enumerate}
        \item \term{Quasi-consistency} when $d \leq \neg \T \ast \neg d$
        \item That $d$ is a \term{Quasi-Gödel Sentence} with respect to $\T$ when $d \leq \neg \T \ast d$
        \item \term{Quasi-completeness} when $\neg \T \ast d \inconst \neg \T \ast \neg d$
    \end{enumerate}
    where, for $X, Y \in E$: $\neg X \ast Y$ is defined as $\neg (X \ast Y)$.
\end{definition}

\parsplit

\begin{definition}[Quasi-Gödelian Systems]\label{definition:quasi-godel}
    Let $(\enc, \app)$ be a Self-Reference System as per Definition \ref{definition:quasi-defs}, and $\T$ an expression. We say that $(\enc, \app)$ is a \term{Quasi-Gödelian System} with respect to $\T$ if there exists an expression $d$ that is a quasi-Gödel Sentence with respect to $\T$, but that quasi-consistency and quasi-completeness constraints are mutually exclusive (i.e., the two properties cannot be satisfied simultaneously).
\end{definition}

\parsplit

We now proceed to identify quasi-Gödelian systems in Arithmetic Logic and Social Choice Theory. See Table \ref{table:overlap} for a summary of these results.

\parsplit

\parsplit
\begin{theorem}\label{theorem:godel-abstract}
    If a theory of Arithmetic Logic is $\omega$-consistent then the Self-Reference System $(\NGodelvar, \app)$ with the subset of valid  elements $\L_0 \setminus \{\bot\}$ (Example \ref{example:godel-consistent-pairs}) is quasi-Gödelian with respect to $\Provable(x)$.
\end{theorem}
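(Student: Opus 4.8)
The plan is to take the Gödel sentence itself as the witnessing quasi-Gödel sentence and to read the mutual exclusivity of quasi-consistency and quasi-completeness directly off its defining fixed-point identity. Concretely, I would set $d = \G$, where $\G$ is the fixed point of $\neg \Provable(x)$ produced by Theorem \ref{theorem:arithmetic-logic-fixed-points}, so that $\G = \neg \Provable(\DObar{\G})$ holds in $\L_0$. Unwinding the shorthand of Note \ref{note:ast-shorthand} gives $\Provable \ast \G = \Provable(\DObar{\G})$, hence $\neg \Provable \ast \G = \neg \Provable(\DObar{\G}) = \G$. In particular $\G \leq \neg \Provable \ast \G$, so $\G$ is a quasi-Gödel sentence with respect to $\Provable(x)$ in the sense of Definition \ref{definition:quasi-defs}.

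Before using the lattice inequalities I would verify that $\G$ is a valid element, i.e. $\G \in \D = \L_0 \setminus \{\bot\}$, since Definition \ref{definition:quasi-defs} requires the quasi-Gödel sentence to lie in $\D$; this is where $\omega$-consistency genuinely enters and is the main obstacle. It amounts to the classical fact that an $\omega$-consistent theory does not refute its Gödel sentence, which I would extract from Proposition \ref{proposition:provability-predicate} as follows. Suppose $\G = \bot$, so $\neg \G = \top$. Since $\neg \G$ and $\top$ then denote the same element of $\L_0$, Proposition \ref{proposition:provability-predicate}-1 gives $\Provable(\DObar{\neg \G}) = \Provable(\DObar{\top}) = \top$. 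On the other hand, Proposition \ref{proposition:provability-predicate}-3 applied to $D = \G$ reads $\Provable(\DObar{\G}) \leq \neg \Provable(\DObar{\neg \G})$, and since $\Provable(\DObar{\G}) = \neg \G = \top$ this forces $\neg \Provable(\DObar{\neg \G}) = \top$, i.e. $\Provable(\DObar{\neg \G}) = \bot$. Together these give $\top = \bot$, a contradiction, so $\G \neq \bot$.

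With $\G \neq \bot$ established, the mutual exclusivity is a short computation in the Boolean algebra $\L_0$. Using $\neg \Provable \ast \G = \G$ once more, quasi-completeness for the pair $(\Provable, \G)$ is the inconsistency $\neg \Provable \ast \G \inconst \neg \Provable \ast \neg \G$, i.e. $\G \wedge \neg \Provable(\DObar{\neg \G}) = \bot$, while quasi-consistency is $\G \leq \neg \Provable \ast \neg \G = \neg \Provable(\DObar{\neg \G})$, equivalently $\G \wedge \neg \Provable(\DObar{\neg \G}) = \G$. If both held simultaneously, we would obtain $\G = \G \wedge \neg \Provable(\DObar{\neg \G}) = \bot$, contradicting $\G \neq \bot$. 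Hence quasi-consistency and quasi-completeness cannot both hold for $(\Provable, \G)$; combined with the first paragraph this exhibits the required $d$ and shows $(\NGodelvar, \app)$ is quasi-Gödelian with respect to $\Provable(x)$, as claimed.
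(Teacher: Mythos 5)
Your proof is correct, and although it uses the same witness $d = \G$ as the paper, it organises the mutual-exclusivity step differently — and more tightly. The paper establishes that $\G$ is a quasi-Gödel sentence exactly as you do, but then argues mutual exclusivity by recalling that $\omega$-consistency and completeness were shown to be jointly contradictory in Theorem \ref{theorem:godel-main}, and asserting that since quasi-consistency and quasi-completeness \emph{follow from} $\omega$-consistency and completeness respectively, they inherit that incompatibility. That transfer runs in the wrong logical direction (weakening both members of an incompatible pair need not preserve incompatibility), so the paper's version tacitly relies on the reader checking that the derivation in Theorem \ref{theorem:godel-main} consumes only the quasi-versions — which is not literally the case, since that proof uses $\neg\G \leq \neg\Provable(\DObar{\neg\G})$ rather than the quasi-consistency inequality $\G \leq \neg\Provable(\DObar{\neg\G})$. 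You instead verify the incompatibility directly at the level of the quasi-properties: quasi-consistency says $\G \wedge \neg\Provable(\DObar{\neg\G}) = \G$, quasi-completeness says the same meet is $\bot$, so jointly $\G = \bot$; and you separately extract $\G \neq \bot$ from $\omega$-consistency via Propositions \ref{proposition:provability-predicate}-1 and \ref{proposition:provability-predicate}-3, which is essentially the final display of the paper's proof of Theorem \ref{theorem:godel-main}. Your version also makes explicit the membership requirement $\G \in \D = \L_0 \setminus \{\bot\}$ imposed by Definition \ref{definition:quasi-defs}, which the paper's proof leaves unaddressed. Both routes deliver the theorem; yours is self-contained and closes a small logical gap, while the paper's emphasises the parallel with the classical consistency/completeness dichotomy.
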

\begin{proof}
    Recall by Theorem \ref{theorem:arithmetic-logic-fixed-points} $\neg \Provable(x)$ has a fixed-point $\G \in \L_0$. Because $\G = \neg \Provable(\DObar{\G})$, $\G$ is clearly a quasi-Gödel sentence with respect to $\Provable(x)$ because $\neg\Provable(x) \ast \G = \neg\Provable(\DObar{\G})$.

    \parsplit

    To prove that $(\NGodelvar, \app)$ is a quasi-Gödelian with respect to $\Provable(x)$ it remains to show that the quasi-consistency and quasi-completeness of $(\Provable(x), \G)$ are mutually exclusive. Indeed, recalling that in our proof of Gödel's Incompleteness Theorem (Theorem \ref{theorem:godel-main}), we showed that if both $\omega$-consistency and completeness held, then $\G = \bot$ in $\L_0$, which in conjunction with $\Provable(\DObar{\ \top\ }) = \top$ led to a contradiction. In other words, we showed that $\omega$-consistency and completeness were mutually exclusive. Thus, because quasi-consistency and quasi-completeness follow from $\omega$-consistency and completeness, respectively by Proposition \ref{proposition:provability-predicate}, we have that they too are mutually exclusive, and thus $(\NGodelvar, \app)$ is a quasi-Gödelian system.\\
\end{proof}

\parsplit
\begin{theorem}\label{theorem:condorcet-abstract}
    Let $(\Welfare, \Omega_i)$ be the Self-Reference System with the subset of valid elements $\PN$ as per Example \ref{example:soc-me}, and let $\Welfare$ satisfy Unanimity and IIA. If $\Welfare$ satisfies Non-Dictatorship then $(\Welfare, \Omega_i)$ is a quasi-Gödelian system with respect to the $i^{th}$ indicator $\T_i \coloneqq (\c,\dots,\i,\dots,\c)$ (Definition \ref{definition:indicator}).
\end{theorem}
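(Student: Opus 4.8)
The plan is to verify the two clauses of Definition \ref{definition:quasi-godel} directly: exhibit a profile $q$ that is a quasi-Gödel sentence with respect to $\T_i$, and then show that quasi-consistency and quasi-completeness of the pair $(\T_i, q)$ cannot hold simultaneously. The witnessing profile comes from Arrow's Impossibility Theorem itself: since $\Welfare$ satisfies Unanimity, IIA and Non-Dictatorship, Theorem \ref{theorem:arrow-full} supplies a profile $q \in \PN$ with $\Welfare(q) = \c$ (I only need one of the two profiles it produces, and the inconsistency $q \inconst q'$ plays no role here). Because all lattice operations on $\PCN$ are coordinate-wise, everything then reduces to a single computation at the $i$-th coordinate.

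First I would record the key computations. Since the $i$-th coordinate of $\T_i$ is $\i$, the definition of $\Omega_i$ gives $\T_i \ast q = \Omega_i(\T_i, \Welfare(q)) = (\c, \dots, \i \wedge \Welfare(q), \dots, \c) = (\c, \dots, \Welfare(q), \dots, \c)$, and coordinate-wise negation yields $\neg \T_i \ast q = (\i, \dots, \neg \Welfare(q), \dots, \i)$; likewise $\neg \T_i \ast \neg q = (\i, \dots, \neg \Welfare(\neg q), \dots, \i)$. Because $\leq$ on $\PCN$ is coordinate-wise and $\i$ is the top of $\PC$, the quasi-Gödel condition $q \leq \neg \T_i \ast q$ collapses to the single inequality $q_i \leq \neg \Welfare(q)$. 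As $\Welfare(q) = \c$ and $\neg \c = \i$, this reads $q_i \leq \i$, which always holds, so $q$ is a quasi-Gödel sentence with respect to $\T_i$.

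Finally I would prove mutual exclusivity by assuming both constraints and deriving a contradiction, mirroring the structure of the argument in Theorem \ref{theorem:godel-main}. Quasi-completeness $\neg \T_i \ast q \inconst \neg \T_i \ast \neg q$ concerns the coordinate-wise meet; every coordinate $j \neq i$ gives $\i \wedge \i = \i \neq \c$, so inconsistency is forced at coordinate $i$, i.e. $\neg \Welfare(q) \wedge \neg \Welfare(\neg q) = \c$, which (using $\neg \Welfare(q) = \i$) simplifies to $\neg \Welfare(\neg q) = \c$. Quasi-consistency $q \leq \neg \T_i \ast \neg q$ similarly collapses to $q_i \leq \neg \Welfare(\neg q)$. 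Combining the two gives $q_i \leq \c$, and since $\c$ is the bottom of $\PC$ this forces $q_i = \c$, contradicting $q \in \PN$. Hence the two constraints are mutually exclusive, completing the verification that $(\Welfare, \Omega_i)$ is quasi-Gödelian with respect to $\T_i$.

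I expect the main obstacle to be bookkeeping rather than conceptual: one must carefully confirm that negation, meet, and the partial order on $\PCN$ all act coordinate-wise, and that each clause of Definition \ref{definition:quasi-defs} genuinely reduces to a statement at the $i$-th coordinate (the remaining coordinates being inert precisely because $\T_i$ places $\c$ there and $\neg \c = \i$ is the top). The one point worth flagging is that the hypotheses Unanimity, IIA and Non-Dictatorship enter solely through the invocation of Theorem \ref{theorem:arrow-full}, which manufactures the profile aggregating to $\c$ that drives the whole argument.
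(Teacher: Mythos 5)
Your proposal is correct and follows essentially the same route as the paper: both invoke Theorem \ref{theorem:arrow-full} to obtain a profile $q \in \PN$ with $\Welfare(q) = \c$, observe that $\neg \T_i \ast q = (\i,\dots,\i)$ is the top element so the quasi-Gödel condition holds trivially, and then reduce quasi-completeness and quasi-consistency to the $i$-th coordinate to derive $q_i \leq \c$, contradicting $q \in \PN$. The only cosmetic difference is that the paper phrases the exclusivity as a chain of equivalences through the condition $\Welfare(\neg q) = \i$ rather than as a direct contradiction from assuming both constraints, but the content is identical.
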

\begin{proof}
    \textbf{($q$ is a quasi-Gödel sentence with respect to $\T_i$):} by Theorem \ref{theorem:arrow-full} there is a $q \in \PN$ such that $\Welfare(q) = \c$, we find that $(\T_i, q)$ satisfies:
    \begin{equation}\label{equation:condorcet-abstract}
        \neg \T_i \ast q = \neg (\c, \dots, i \wedge \Welfare(q), \dots, c) = \neg (\c, \dots, \c, \dots, \c) = (\i, \dots, \i, \dots, \i)
    \end{equation}
    And $\top \coloneqq (\i, \dots, \i, \dots, \i)$ is the top element of $\PN$ so that $q \leq \neg \T_i \ast q$ holds.

    \parsplit

    \noindent\textbf{(Mutually exclusivity of quasi-consistency and quasi-completeness):} $(\T_i, q)$ satisfies quasi-completeness if and only if $\neg \T_i \ast q \inconst \neg \T_i \ast \neg q$. However, because $\neg \T_i \ast q = \top$, that occurs if and only if one coordinate of $\neg \T_i \ast \neg q$ is $\c$, which occurs if and only if $\Welfare(\neg q) = \i$ so that $\neg \T_i \ast \neg q = (\i, \dots, \c, \dots, \i)$. However, that occurs if and only if $q \nleq \neg \T_i \ast \neg q$ (i.e., quasi-consistency does not hold) because $q_i \nleq \c$, always.\\
\end{proof}

\begin{theorem}\label{theorem:dictator-abstract}
    Let $(\Welfare, \Omega_i)$ be the Self-Reference System with the subset of valid elements $\PN$ as per Example \ref{example:soc-me}, and let $\Welfare$ satisfy Unanimity and IIA. If $\Welfare$ satisfies Dictatorship and Unrestricted Domain then $(\Welfare, \Omega_i)$ is not a quasi-Gödelian system with respect to the $i^{th}$ indicator $\T_i$.
\end{theorem}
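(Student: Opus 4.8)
The plan is to prove the contrapositive-free, direct route suggested by the introduction: rather than analyse the quasi-consistency/quasi-completeness clause, I will show that under Dictatorship and Unrestricted Domain \emph{no} valid profile $d \in \PN$ can be a quasi-Gödel sentence with respect to $\T_i$. Since Definition \ref{definition:quasi-godel} requires the \emph{existence} of such a $d$, this immediately rules out $(\Welfare, \Omega_i)$ being quasi-Gödelian. I take the dictator to be at the same individual $i$ that indexes both $\Omega_i$ and $\T_i$, consistently with the indexing in Theorem \ref{theorem:condorcet-abstract}.

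First I would unwind the quasi-Gödel sentence condition into a statement about a single coordinate. For any $d \in \PN$, compute $\T_i \ast d = \Omega_i(\T_i, \Welfare(d)) = (\c, \dots, \i \wedge \Welfare(d), \dots, \c)$, and since $\i$ is the top element of $(\PC, \leq)$ we have $\i \wedge \Welfare(d) = \Welfare(d)$, so $\T_i \ast d = (\c, \dots, \Welfare(d), \dots, \c)$. Negating coordinate-wise (recalling $\neg \c = \i$) gives $\neg \T_i \ast d = (\i, \dots, \neg \Welfare(d), \dots, \i)$. Because the order on $\PCN$ is coordinate-wise and $\i$ is the top, the inequality $d \leq \neg \T_i \ast d$ is automatic at every coordinate $j \neq i$ and reduces at coordinate $i$ to $d_i \leq \neg \Welfare(d)$. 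Hence $d$ is a quasi-Gödel sentence with respect to $\T_i$ if and only if $d_i \leq \neg \Welfare(d)$.

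Next I would show this inequality is unsatisfiable for any $d \in \PN$. The key observation is that $\Welfare(d) \leq d_i$ holds for \emph{every} $d \in \PN$: when $d_i = \i$ this is trivial since $\i$ is the top, and when $d_i \neq \i$ it is precisely clause 2 of Definition \ref{definition:dictator-generalised}. Now suppose for contradiction that $d_i \leq \neg \Welfare(d)$. Applying the order-reversing involution $\neg$ yields $\Welfare(d) \leq \neg d_i$, and combining this with $\Welfare(d) \leq d_i$ gives $\Welfare(d) \leq d_i \wedge \neg d_i = \c$ by the orthocomplement law, i.e. $\Welfare(d) = \c$. But Unrestricted Domain ($im(\Welfare) = \P$) forces $\Welfare(d) \in \P$, so $\Welfare(d) \neq \c$, a contradiction. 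Thus no valid $d$ is a quasi-Gödel sentence with respect to $\T_i$, and by Definition \ref{definition:quasi-godel} the system $(\Welfare, \Omega_i)$ is not quasi-Gödelian.

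The main obstacle I anticipate is handling the boundary case $d_i = \i$, where clause 2 of the dictator condition does not literally apply and one must instead use triviality of $\Welfare(d) \leq \i$ to keep the argument uniform; relatedly, the crux is recognising that Unrestricted Domain is exactly the hypothesis that forecloses $\Welfare(d) = \c$, which is the single configuration that would otherwise make a profile a quasi-Gödel sentence. Everything else is a routine coordinate-wise computation in the orthocomplemented lattice $\PCN$.
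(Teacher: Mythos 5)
Your overall strategy coincides with the paper's: both proofs show that no $d \in \PN$ can be a quasi-G\"odel sentence with respect to $\T_i$, reduce that condition to the single-coordinate inequality $d_i \leq \neg\,\Welfare(d)$, and use Unrestricted Domain to exclude the only remaining possibility $\Welfare(d) = \c$. The coordinate-wise reduction and the role you assign to Unrestricted Domain are correct and match the paper exactly.

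There is, however, a genuine flaw in the step ``applying the order-reversing involution $\neg$ yields $\Welfare(d) \leq \neg d_i$''. Negation on $\PC$ (flipping strict preferences) is \emph{not} order-reversing with respect to the strictness order, whatever the paper's footnote on orthocomplemented lattices may suggest: for $r$ given by $a \prec b \prec c$ and $s$ given by $a \prec b \sim c$ one has $r \leq s$, yet $\neg s$ (namely $b \sim c \prec a$) is not $\leq \neg r$ (namely $c \prec b \prec a$), because $\neg r$ contains the strict preference $c \prec b$ that $\neg s$ lacks; on weak orders $\neg$ is in fact order-\emph{preserving}. So the inference from $d_i \leq \neg\,\Welfare(d)$ to $\Welfare(d) \leq \neg d_i$ is unjustified as written. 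The gap is easily closed without it: from your (correct) observation that dictatorship gives $\Welfare(d) \leq d_i$ for every $d \in \PN$, transitivity applied to $\Welfare(d) \leq d_i \leq \neg\,\Welfare(d)$ yields $\Welfare(d) \leq \neg\,\Welfare(d)$, hence $\Welfare(d) = \Welfare(d) \wedge \neg\,\Welfare(d) = \c$ by the complement law (which \emph{does} hold in $\PC$), contradicting Unrestricted Domain exactly as you intend. With that repair your argument becomes a slightly more algebraic rendering of the paper's proof, which instead reaches the same contradiction by a direct case split on whether $d_i$ has a strict preference or equals $\i$.
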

\begin{proof}
    For any valid expression $p \in \PN$, we have that $p$ is quasi-Gödel sentence with respect to $\T_i$ if and only if:
    \begin{equation*}
        \neg \T_i \ast p = \neg (\c, \dots, i \wedge \Welfare(p), \dots, c) = (\i, \dots, \neg \Welfare(p), \dots, \i)
    \end{equation*}
    and so $p \leq \neg \T_i \ast p$ if and only if $p_i \leq \neg \Welfare(p_i)$. By the assumption of $\Welfare$ having a dictator at $i$, if $p_i$ has any strict preference $a \prec b$ then $\neg \Welfare(p)$ must have the strict preference $b \prec a$, which means $p \nleq \neg \Welfare(p_i)$. The only other option is that $p_i = \i$, which means that the only solution for $p_i \leq \neg \Welfare(p_i)$ requires that $\Welfare(p_i) = \c$, which contradicts unrestricted domain.\\
\end{proof}

Quasi-completeness and quasi-consistency do not necessarily hold together for all dictatorial Social Welfare Functions, but they do in the following important special case:

\parsplit

\begin{theorem}\label{theorem:dictator-abstract-2}
    Let $(\Welfare, \Omega_i)$ be the Self-Reference System with the subset of valid elements $\PN$ as per Example \ref{example:soc-me}, and let $\Welfare$ satisfy Unanimity, IIA and have a dictator at individual $i$. If for $p \in \PN$, $p_i \neq \i$ and $\Welfare(\neg p) = \neg \Welfare(p)$ then quasi-completeness and quasi-consistency all hold in $(\T_i, p)$ for the $i^{th}$ indicator $\T_i$.
\end{theorem}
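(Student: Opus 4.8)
The plan is to unpack all three quasi-conditions of Definition \ref{definition:quasi-defs} for the pair $(\T_i, p)$ and to reuse the coordinate computations already performed in the proofs of Theorems \ref{theorem:condorcet-abstract} and \ref{theorem:dictator-abstract}. Since $\T_i = (\c,\dots,\i,\dots,\c)$ and $\Omega_i$ only touches coordinate $i$, one has $\neg \T_i \ast p = (\i,\dots,\neg\Welfare(p),\dots,\i)$, and, using the hypothesis $\Welfare(\neg p) = \neg\Welfare(p)$ together with $\neg\neg = \mathrm{id}$, $\neg \T_i \ast \neg p = (\i,\dots,\neg\Welfare(\neg p),\dots,\i) = (\i,\dots,\Welfare(p),\dots,\i)$. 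Every coordinate other than $i$ is the top element $\i$ of $\PC$, so both quasi-completeness and quasi-consistency collapse to statements about coordinate $i$ alone; this reduction is the first step, and it is the only place the duality hypothesis enters structurally.

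For quasi-completeness I would form the meet $\neg \T_i \ast p \wedge \neg \T_i \ast \neg p$ coordinate-wise. Off coordinate $i$ it is $\i \wedge \i = \i$, while at coordinate $i$ it is $\neg\Welfare(p) \wedge \Welfare(p) = \bot = \c$ by the orthocomplement law $r \wedge \neg r = \bot$. A profile carrying a $\c$ coordinate is not a valid element of $\PN$, so the two expressions are inconsistent and quasi-completeness holds. This half is essentially free once the duality hypothesis has paired $\neg\Welfare(p)$ against its own complement at coordinate $i$.

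Quasi-consistency is the crux. It asks for $p \leq \neg \T_i \ast \neg p = (\i,\dots,\Welfare(p),\dots,\i)$, which — the off-coordinates being top — is exactly $p_i \leq \Welfare(p)$. The dictator at $i$ already supplies $\Welfare(p) \leq p_i$ (here $p_i \neq \i$, so Definition \ref{definition:dictator-generalised} applies and also forces $\Welfare(p) \neq \c$), so proving quasi-consistency amounts to forcing the reverse inequality, i.e.\ showing $\Welfare(p) = p_i$ on this profile — the dictator acting as a \emph{strong} dictator here. My plan is to extract this from the duality hypothesis: transporting $p_i \leq \Welfare(p)$ across the orthocomplement (order-reversing negation) and substituting $\neg\Welfare(p) = \Welfare(\neg p)$ rewrites the goal as $\Welfare(\neg p) \leq (\neg p)_i$, which is precisely the dictator inequality at the profile $\neg p$, with $(\neg p)_i = \neg p_i \neq \i$ since $p_i$ is a weak order and hence $p_i \neq \c$. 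The main obstacle lies exactly in this last step: Definition \ref{definition:dictator-generalised} is stated for valid profiles, so to invoke it at $\neg p$ I must either argue $\neg p \in \PN$ or localise the dictator inequality to coordinate $i$ via IIA, since the hypotheses only guarantee $p_i \neq \i$ and say nothing about the other coordinates of $p$; confirming that the orthocomplement's order-reversal faithfully relays the coordinate-$i$ inequality between $p$ and $\neg p$ is where I expect the real work to be.
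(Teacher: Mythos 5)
Your reduction to coordinate $i$ and your quasi-completeness argument are correct; the quasi-completeness half is in fact slightly cleaner than the paper's (the paper derives $\Welfare(p) \inconst \Welfare(\neg p)$ from the dictator forcing $\Welfare(p)$ and $\Welfare(\neg p)$ to carry opposing strict preferences, whereas you get the coordinate-$i$ meet $\neg\Welfare(p)\wedge\Welfare(p)=\c$ directly from orthocomplementation, without even using $p_i\neq\i$). The problem is the quasi-consistency half, and the obstacle you flag is not merely real but fatal to the route you propose. The hypotheses do not give $\neg p\in\PN$: if any coordinate $p_j=\i$ with $j\neq i$ then $(\neg p)_j=\c$ and $\neg p\notin\PN$, so Definition \ref{definition:dictator-generalised} places no constraint on $\Welfare(\neg p)$ and you cannot extract $p_i\leq\Welfare(p)$ from the dictator inequality at $\neg p$. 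Worse, $p_i\leq\Welfare(p)$ can genuinely fail under all the stated hypotheses: a serial (lexicographic) dictatorship in which individual $i$ dictates and some other individual breaks $i$'s ties satisfies Unanimity, IIA and dictatorship at $i$, and satisfies $\Welfare(\neg q)=\neg\Welfare(q)$ by neutrality, yet admits profiles $p$ with $p_i\neq\i$ and $\Welfare(p)$ strictly stricter than $p_i$, so that $p_i\nleq\Welfare(p)$. Your diagnosis that the literal reading forces the dictator to act as a \emph{strong} dictator at $p$ is exactly right, and that is not implied by the hypotheses.

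The paper's own proof never attempts your final step: for quasi-consistency it only records $\Welfare(p)=\Welfare(\neg\neg p)=\neg\Welfare(\neg p)$ and hence $\Welfare(p)\leq\neg\Welfare(\neg p)$. That is the coordinate-$i$ inequality between $\T_i\ast p$ and $\neg\T_i\ast\neg p$, not between $p$ and $\neg\T_i\ast\neg p$, which is what Definition \ref{definition:quasi-defs} (namely $d\leq\neg\T\ast\neg d$ with $d=p$) literally demands. So either the paper is implicitly reading quasi-consistency at the level of the applied expression $\T_i\ast p$, or its one-line argument establishes something weaker than the stated definition. In either case you should not expect to prove $p_i\leq\Welfare(p)$ from these hypotheses; to match the paper you would replace your final step with its trivial observation, and it is worth flagging that this does not discharge the definition as written.
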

\begin{proof}
    \textbf{(Quasi Consistency)} Because $\Welfare(p) = \Welfare(\neg \neg p) = \neg \Welfare(\neg p)$, we thus have $w(p) \leq \neg w(\neg p)$.

    \parsplit

    \noindent\textbf{(Quasi Completeness)} $\neg \T_i \ast p \inconst \neg \T_i \ast \neg p$ if and only if $\Welfare(p) \inconst \Welfare(\neg p)$, which holds by the following argument. Firstly, because $p_i \neq \i$ the assumption that $\Welfare$ has a dictator at $i$ means that $\Welfare(p)$ and $\Welfare(\neg p)$ must have opposing, strict preferences for some pair of alternatives.\\
\end{proof}

\noindent The condition that $\Welfare(\neg p) = \neg \Welfare(p)$ is very relevant. In particular, it occurs when \term{neutrality} holds (i.e., when $\Welfare$ uses the same aggregation process for evaluating $a$ vs $b$ as any $c$ vs $d$). Neutrality for instance holds if for $\top \coloneqq (\i,\dots,\i)$, we have that $\Welfare(\top) = \i$ (a property known as \term{Pareto indifference}) along with IIA and Unrestricted Domain (e.g., see \cite{paper0-arxiv,arrow-hierarchy}).

    {\small
        \begin{table}[h]
            \centering
            \begin{tabular}{|l|l|l|l|}
                \hline
                \textbf{Setting}                 &
                \textbf{Arithmetic Logic}        &
                \textbf{No Dictator}             &
                \textbf{Unrestricted}                                                \\
                                                 &
                                                 &
                                                 &
                \textbf{Domain +}                                                    \\
                                                 &
                                                 &
                                                 &
                \textbf{Dictator at $i$}                                             \\ \hline
                Expression Pair                  &
                $(Provable(x),\ \G)$             &
                $(\T_i,\ q)$                     &
                $(\T_i,\ p)$                                                         \\

                                                 &
                for $\G$ a Gödel Sentence        &
                for $\Welfare(q) = \c$           &
                for $p \in \PN$                                                      \\\hline

                Quasi-Gödel Sentence             & \tick  & \tick  & \cross          \\\hline
                Quasi-consistency $\wedge$       & \cross & \cross & \sometimes      \\
                Quasi-completeness               &        &        &                 \\\hline

                Proven by Theorem                &
                \ref{theorem:godel-abstract}     &
                \ref{theorem:condorcet-abstract} &
                \ref{theorem:dictator-abstract} \& \ref{theorem:dictator-abstract-2} \\ \hline
            \end{tabular}

            \caption{A table of when the conditions of Definition \ref{definition:quasi-defs} hold. A tick (\tick) means that the condition always holds, a cross (\cross) means that the condition never holds, ``\sometimes'' means the condition sometimes holds.}
            \label{table:overlap}
        \end{table}
    }

We have thus demonstrated a significant overlap between Gödel's Incompleteness Theorem and Arrow's Impossibility Theorem in terms of the joint satisfiability of constraints that abstract Gödel Sentences, consistency and completeness in Arithmetic Logic. The mutual exclusivity of the quasi-consistency and quasi-completeness constraints characterised both Gödel's Incompleteness Theorem (Theorem \ref{theorem:godel-abstract}) and Social Welfare Functions that produce contradictory preference cycles (Theorem \ref{theorem:condorcet-abstract}) as necessitated by Arrow's Impossibility Theorem (Theorem \ref{theorem:arrow-full}). Moreover, where the incompatibility of quasi-consistency and quasi-completeness arise due to the existence of quasi-Gödel Sentences; if a Social Welfare Function has a dictator, quasi-consistency and quasi-completeness become compatible due to the absence of quasi-Gödel sentences (Theorems \ref{theorem:dictator-abstract}-\ref{theorem:dictator-abstract-2}). Although, the overlap between Gödel's Incompleteness Theorem and Arrow's Impossibility Theorem is appropriately not total because the reasons quasi-consistency and quasi-completeness become incompatible in each result differs.

\section{Discussion and Conclusion}\label{section:discussion}

Gödel's (First) Incompleteness Theorem maintains an ever-growing relevance to Formal Logic and Computer Science, largely due to its correspondence to theorems about the non-existence of algorithms for solving particular problems i.e., the incomputability of those problems. Incomputability results are valuable due to their ability to inform practitioners whether they are attempting to solve problems that are equivalent to well-known unsolvable problems~\cite{hoare-incomputability}. For example, the computation of certain fluid flows~\cite{fluid-flows}, ray-tracing paths in computer graphics~\cite{ray-tracing}, and air travel planning optimisations~\cite{air-travel-planning} are all equivalent to solving the incomputable Halting Problem.

\parsplit

Impossibility results in Social Choice Theory such as Arrow's Impossibility Theorem are crucial to Economics because they reveal inherent limitations in the design of decision-making systems that aggregate individual preferences into collective choices. Their implications are also applicable to informing practitioners of trade-offs that must be considered, for example, when developing voting methods~\cite{intro-to-voting-theory}, land management policy~\cite{land-management}, and economic indicators~\cite{social-welfare-gdp}.

\parsplit

Yet, a formal relationship between the fields' most seminal results: Arrow's Impossibility Theorem and Gödel's Incompleteness Theorem is lacking. In this paper, we related these two results by instantiating them in a general mathematical object we introduced, called a Self-Reference System. The resulting theory of Self-Reference Systems captured important properties of systems able to be characterised by encoding of expressions, and application of encodings to expressions. This was even demonstrated for highly lossy encoding functions such as Social Welfare Functions.

\parsplit

The most significant overlap arose through the abstraction of properties applicable to provability predicates. Specifically, we restated constraints on provability predicates (i.e., equations concerning Lindenbaum Algebras) using the general language of Self-Reference Systems. For example, a constraint characterising completeness could be stated for expressions (i.e., profiles) in Social Choice Theory. The abstract constraint (called \term{quasi-completeness}) does not concern completeness with respect to provability per se, but captures an important algebraic property that underlies completeness. The same process yielded constraints such as \term{quasi-consistency} and the existence of \term{quasi-Gödel sentences} (Definition \ref{definition:quasi-defs}).

\parsplit

Both Gödel's Incompleteness Theorem and Non-Dictatorship in Arrow's Impossibility Theorem (necessitating contradictory preference cycles, see Theorem \ref{theorem:arrow-full}) were shown to yield Self-Reference Systems where quasi-Consistency and quasi-Completeness were necessarily mutually exclusive due to the existence of a quasi-Gödel Sentence (Theorems \ref{theorem:godel-abstract} \& \ref{theorem:condorcet-abstract}). Self-Reference Systems failing to jointly satisfy the constraints in that manner were called \term{quasi-Gödelian}. However, the reasons that quasi-consistency and quasi-completeness were mutually exclusive in the two settings differed (see Table \ref{table:overlap}). Conversely, if a Social Welfare Function was dictatorial then it was not quasi-Gödelian because no quasi-Gödel sentence could exist and moreover, quasi-consistency and quasi-completeness were mutually satisfiable (Theorems \ref{theorem:dictator-abstract} \& \ref{theorem:dictator-abstract-2}). Hence, the fact that quasi-Gödelian systems arose precisely due to the presence of Gödel Sentences in Arithmetic Logic and Condorcet's Paradox in Social Choice Theory reveals a striking overlap between incomputability and impossibility between Gödel and Arrow's Theorems.

\parsplit

Moreover, Gödel Sentences $\G$, and profiles $q$ that aggregate to a contradictory preference cycle both manifested as expressions in Self-Reference Systems that encode to a bottom element, i.e., $\DObar{\bot}$ (Example \ref{example:godel-consistent-pairs}) and the contradictory preference cycle $\c$ (Example \ref{example:soc-me}), respectively. In both cases, this occurred with respect to a formal paradox. In logic, the paradox arose out of the assumption a theory was both $\omega$-consistent and complete (Theorem \ref{theorem:godel-main}), and in Social Choice Theory, the paradox arose out of the assumption of a Non-Dictatorial Social Welfare Function in the Arrovian Framework (Theorem \ref{theorem:arrow-full}) --- in essence, Condorcet's Paradox.

\parsplit

Another significant overlap was the correspondence between the well-known Diagonalisation Lemma used in proofs of Gödel's Incompleteness Theorem, and the dictatorship condition of a Social Welfare Function both corresponding to the existence of a diagonaliser in a Self-Reference Systems (Theorems \ref{theorem:arithmetic-logic-fixed-points}-\ref{theorem:dictator-as-diagonaliser}). However, the role of fixed-points in the two settings differed. The fixed-point property that underlies the Diagonalisation Lemma is a key element of Gödel's Incompleteness Theorem due to its entailment of Gödel sentences. However, in the context of Arrow's Impossibility Theorem, it prevents incomputability that follows from contradictory preference cycles that arise from Non-Dictatorship.

\parsplit

Hence, studying the two theorems in terms of generalised incomputability results of Self-Reference Systems revealed significant overlaps between the mechanisms underlying impossibility in Social Choice and incomputability in logic; however, not to the extent the two theorems were equated. Moreover, the fact that Gödel sentences were related to profiles that aggregate to contradictory preference cycles further implicates the role of circular and self-referential paradoxes in incomputability.

\parsplit

Before concluding, we outline a number of promising topics of further research towards the theory of Self-Reference Systems and its applications.

\parsplit

The theory of Self-Reference Systems may also be developed by identifying additional application domains to those of this paper; we outline three avenues towards further application domains. The first avenue is to analyse other well-known, impossibility or fixed-point results in terms of Self-Reference Systems, as we did for Arrow's Impossibility Theorem. For example, Chichilnisky's Impossibility Theorem of topological Social Choice Theory~\cite{chichilnisky-heal}, or the computability (i.e., existence) of Nash Equilibria (see~\cite{binmore} for a discussion of the relation of the concept to Diagonalisation). The second avenue is to investigate the overlap of the theory of Self-Reference System with existing general theories of Diagonalisation and Fixed-Point arguments. An example of such a general theory is Lawvere's Fixed-Point theorem~\cite{Lawvere}, which already has extensive applications~\cite{yanofsky}. However, Lawvere's Fixed-Point Theorem concerns functions with signature $\E \times \E \rightarrow \C$ rather than the signature $\E \times \C \rightarrow \E$, which is used for application functions in Self-Reference Systems. Thus, further generalisations of the theory of Self-Reference Systems may be required. The third avenue is to investigate the overlap of the theory of Self-Reference System with general theories of self-reference and self-reproduction. For example, Moss' Equational Logic of Self-Expression~\cite{moss}, Kauffman's Paired Categories~\cite{categorical-pairs} and Gonda et al.'s Simulators in Target Context Categories~\cite{gonda}.

\parsplit

To conclude, by introducing a theory of Self-Reference Systems, we were able to establish formal overlaps and differences between Arrow's Impossibility Theorem and Gödel's Incompleteness Theorem. This was achieved through abstractions of diagonalisation and fixed-point arguments, and of the joint satisfiability of key constraints regarding provability predicates in logic. Thus, broadening the scope of incomputability to include problems of Social-Decision Making in search of a more general foundation of computability may facilitate the cross-pollination of methods from fields with incomputability results.


\renewcommand*{\bibfont}{\scriptsize}
\printbibliography

\appendix

\section*{Appendix}
\renewcommand{\thesubsection}{\Alph{subsection}}

\subsection{Gödel Numbering on Lindenbaum Algebras}\label{appendix:godel-numbering}

Recalling that given a Gödel Numbering $G$ and formula $f \in \F_1$ that $\NGodel{f}$ is the `Gödel number' of the equivalence class of all logically equivalent formulae to $f$. By Definition \ref{definition:enc} we define $\NGodel{f}$ to be $G(f')$ for the shortest formula $f'$ logically equivalent to $f$. There are two important reasons for defining $\NGodel{f}$ this way. Firstly, it is useful to ensure that if two formulae $f, g \in \F_1$ are logically equivalent then for any predicate $B(x)$ so are $B(\DObar{f})$ and $B(\DObar{g})$. This is not necessarily the case when using the original Gödel numerals $\Obarvar$ rather than $\DObarvar$. For instance, if $f$ and $g$ are distinct but logically equivalent formulae then $G(f) \neq G(g)$. So, if $G(f) = n$, $G(g) = m$ and $B(x)$ is the predicate $\saymath{x = \underline{m}}$ then $B(\Obar{f}) = \saymath{\underline{n} = \underline{m}}$ is not logically equivalent to $B(\Obar{g}) = \saymath{\underline{m} = \underline{m}}$.

\parsplit

\noindent Secondly, we need to ensure $\NGodelvar$ is computable in order to reason about Gödel's Incompleteness Theory, e.g., to demonstrate that there is a Gödel Sentence. In other words, given $f \in \F_1$, the task of finding the shortest $f' \in \F_1$ that is logically equivalent to $f$ has to be achievable in finitely many steps.

\parsplit

\godelnumbering*
\begin{proof}
    1. By definition: $\NGodel{f} = \NGodel{g} = G(f')$ if and only if $f' \in \F_1$ is the shortest formula logically equivalent to both $f$ and $g$.
    \parsplit

    \noindent 2. If $f$ and $g$ are logically equivalent then $B(\DObar{f}) = B(\Obar{f'}) = B(\DObar{g})$.
    \parsplit

    \noindent 3. $\NGodelvar$ is computable because given a formula $f$, there are only finitely many formulae that are as short or shorter than $f$ which we \textit{need to check} for logical equivalence to $f$. Specifically, there are clearly finitely many as short or shorter formulae than $f$ if we ignore inconsequential changes of variable names (e.g., $x, y, z, \dots$ to $x_0, x_1, x_2, \dots$ or $a, b, c, \dots$).

    \parsplit

    To ignore these changes, one simply needs to fix an enumeration of variable names, say, $x_0, x_1, x_2, \dots$, and to find the shortest logically equivalent $f'$ to an $f$ on $n$ free variables, one need only consider the finitely many possible formulae as short as or shorter than $f$ that only use propositional variables $x_0, x_1, x_2, \dots, x_n$ (recalling that logically equivalent formulae must have the same number of variables as per Definition \ref{definition:lindenbaum-algebra-def}).\\
\end{proof}

\subsection{Proofs and Supplementary Results for Section \ref{section:results}}\label{appendix:result-seciton-proofs}

\begin{proposition}\label{proposition:bounds}
    Let $\P$ be the set of weak orders on a set $\A$ and $\leq$ be the strictness ordering on $\P$. Then, for weak orders $r, s \in \P$:
    \begin{enumerate}
        \item $r \leq s \implies r \subseteq s$ (i.e., as subsets of $\A \times \A$).
        \item The least upper bound $r \vee s$ is the transitive closure of $r \cup s$, $Trans(r \cup s)$.
        \item The greatest lower bound $r \wedge s$ if it exists is $r \cap s$.
    \end{enumerate}
\end{proposition}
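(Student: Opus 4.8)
The plan is to represent each weak order $r \in \P$ by its weak-preference relation, i.e. as the subset $\{(a,b) \in \A \times \A : a \preceq b \text{ in } r\}$, and to show that under this identification the strictness order $\leq$ coincides with set inclusion. Part 1 asserts one direction of this correspondence, but parts 2 and 3 need the converse as well, so first I would establish the full biconditional $r \leq s \iff r \subseteq s$. The forward direction (part 1) I would prove by contrapositive: if $(a,b) \notin s$, then completeness of $s$ forces $b \prec a$ in $s$, so the hypothesis $r \leq s$ (every strict preference of $s$ is a strict preference of $r$) yields $b \prec a$ in $r$, whence $(a,b) \notin r$. The converse is symmetric: assuming $r \subseteq s$ and a strict preference $a \prec b$ in $s$, the inclusion rules out $(b,a) \in r$, and completeness of $r$ upgrades this to $a \prec b$ in $r$. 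The single subtle ingredient is completeness, which is exactly what converts ``not weakly preferred'' into ``strictly dispreferred''.

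With the order isomorphism between $(\P, \leq)$ and the weak orders ordered by inclusion in hand, parts 2 and 3 reduce to identifying joins and meets in the inclusion order. For part 2, I would show $Trans(r \cup s)$ is a genuine weak order: it is reflexive and transitive by construction, and it is complete because $r \cup s$ already contains, for every pair $\{a,b\}$, at least one of $(a,b)$ and $(b,a)$ (each of $r,s$ being complete), a property the transitive closure only preserves. It is an upper bound since $Trans(r \cup s) \supseteq r, s$ gives $Trans(r \cup s) \geq r, s$ via the biconditional, and it is the \emph{least} upper bound because any upper bound $u$ satisfies $u \supseteq r$ and $u \supseteq s$ (forward direction), hence $u \supseteq r \cup s$, and $u$ being transitive forces $u \supseteq Trans(r \cup s)$, i.e. $u \geq Trans(r \cup s)$.

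For part 3, I would first note that $r \cap s$ is transitive, being an intersection of transitive relations, and that every lower bound $t$ satisfies $t \subseteq r$ and $t \subseteq s$ by the forward direction, hence $t \subseteq r \cap s$. Consequently, if $r \cap s$ is itself complete, and thus a weak order, it is a lower bound containing every other lower bound and so is the greatest lower bound, equal to $r \cap s$. If instead $r$ and $s$ strictly disagree on some pair $\{a,b\}$, then $r \cap s$ omits both $(a,b)$ and $(b,a)$; any lower bound, lying inside $r \cap s$, must omit them too and so cannot be complete, whence no weak order is a lower bound and the greatest lower bound fails to exist, which is exactly the ``if it exists'' caveat. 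I expect the main obstacle to be the completeness-driven biconditional of part 1: once the inclusion correspondence is pinned down, the join-as-transitive-closure and meet-as-intersection identifications in parts 2 and 3 are routine order theory together with the elementary observations that transitive closure preserves completeness and intersection preserves transitivity.
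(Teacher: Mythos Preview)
Your proposal is correct and follows essentially the same approach as the paper: the contrapositive via completeness for part~1, then showing $Trans(r\cup s)$ is a weak order and the smallest transitive superset for part~2, and that $r\cap s$ is transitive and contains every lower bound for part~3. You are in fact slightly more careful than the paper, since you explicitly prove the converse $r\subseteq s\implies r\leq s$, which the paper tacitly relies on in parts~2 and~3 while only stating and proving the forward direction in part~1.
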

\begin{proof}
    (1) We prove the contrapositive that $r \nsubseteq s$ implies $r \nleq s$. If $r \nsubseteq s$ then $\exists (a,b) \in r$ such that $(a,b) \notin s$. However, by the completeness axiom for weak orders, this implies $(b,a) \in s$, i.e., $b \prec a$ in $s$. However, if $(a,b)$ in $r$ then $b \nprec a$ in $r$. Hence, combining  $b \prec a$ in $s$ with $b \nprec a$ in $r$, we conclude $r \nleq s$.\\

    \noindent(2) We begin by noting that the relation $Trans(r \cup s)$ corresponds to a weak order because it is by definition transitive, and complete because $r$ is complete and $r \subseteq Trans(r \cup s)$. Then, it follows that $Trans(r \cup s)$ is an upper bound of $r$ and $s$ by (1). To show $Trans(r \cup s)$ is the least upper bound, it suffices to show for any other upper bound $t$ of $r$ and $s$, $Trans(r \cup s) \subseteq t$. Indeed, if a relation $t \in \P$ is a upper bound of $r$ and $s$, by (1), $r \subseteq t$ and $s \subseteq t$, which implies $r \cup s \subseteq t$. By definition of transitive closures, $Trans(r \cup s)$ includes all other transitive relations that include $r \cup s$. Hence, $Trans(r \cup s) \subseteq t$.\\

    \noindent(3) The intersection of two transitive relations is again transitive, so if $r \cap s$ is complete it corresponds to a weak order $v$ which is a lower bound of $r$ and $s$. By (1) to be the greatest lower bound, the relation must be the largest relation among lower bounds. Indeed, if we could remove an element $(a, b)$ from $r \cap s$ and have it still be a lower bound, then $b \prec a$ in $r$ and $s$ by completeness. But this means $(a, b)$ is not in either of $r$ or $s$, contradicting $(a,b) \in r \cap s$.\\
\end{proof}

\begin{proposition}\label{proposition:missing-bounds}
    $\PC$ (see Definition \ref{definition:extending-order}) has all greatest lower bounds.
\end{proposition}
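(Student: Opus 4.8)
The plan is to reduce to the only nontrivial case and then split according to whether the set-theoretic intersection of the two weak orders is itself a weak order. First, if either argument equals $\c$, then since $\c$ is by construction the bottom element of $(\PC, \leq)$ (Definition \ref{definition:extending-order}), the greatest lower bound of $\c$ with any element is $\c$ itself, so such meets trivially exist. It therefore suffices to exhibit a greatest lower bound $r \wedge s$ for an arbitrary pair of weak orders $r, s \in \P$.

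For this I would first record, using Proposition \ref{proposition:bounds}(1), that every weak-order lower bound $t \in \P$ of $r$ and $s$ satisfies $t \subseteq r$ and $t \subseteq s$, hence $t \subseteq r \cap s$ (viewing relations as subsets of $\A \times \A$); and that $\c$ is always a lower bound. The set of lower bounds of $\{r,s\}$ in $\PC$ is thus $\{\c\} \cup \{t \in \P : t \subseteq r \cap s\}$, and the task is to show this set always has a greatest element.

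The argument then splits on whether $r \cap s$ is complete. Since the intersection of two transitive relations is transitive, $r \cap s$ is always transitive, so completeness is the only weak-order axiom that can fail. When $r \cap s$ is complete it is a weak order, it is a lower bound, and it contains (hence, by Proposition \ref{proposition:bounds}(1) applied in reverse for weak orders, dominates in $\leq$) every other weak-order lower bound; by Proposition \ref{proposition:bounds}(3) it is the greatest lower bound, and it lies above $\c$, so it remains the greatest lower bound in $\PC$. When $r \cap s$ is incomplete there is a pair $a,b \in \A$ with neither $(a,b)$ nor $(b,a)$ in $r \cap s$; any weak order $t \subseteq r \cap s$ would then also omit both pairs, violating completeness, so no weak-order lower bound exists and $\c$ is the unique, hence greatest, lower bound, giving $r \wedge s = \c$.

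The main obstacle is the incomplete case: the key observation is that incompleteness of $r \cap s$ is exactly the situation in which $r$ and $s$ carry opposing strict preferences on some pair $\{a,b\}$, which rules out any weak-order lower bound and forces the meet down to the contradictory preference cycle $\c$. This matches the intended reading of $\c$ as the relation containing all strict preferences, and confirms that adjoining $\c$ as a bottom element is precisely what supplies the greatest lower bounds missing from $(\P, \leq)$. Binary greatest lower bounds then extend to all finite nonempty subsets by the usual induction.
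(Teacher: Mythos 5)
Your proof is correct, but it is organised around a different dichotomy from the paper's. The paper reduces to pairs $r, s \in \P$ that are \emph{incomparable} under $\leq$ and then asserts that such pairs must carry opposing strict preferences, so that $\c$ is the only remaining lower bound; you instead split on whether the intersection $r \cap s$ (as a subset of $\A \times \A$) is complete. Your split is the sharper one: incomparable weak orders need not have opposing strict preferences --- the paper's own example $r = a \sim b \prec c$, $s = a \prec b \sim c$ is an incomparable pair whose meet $a \prec b \prec c$ exists in $\P$ --- whereas incompleteness of $r \cap s$ is exactly equivalent to the presence of opposing strict preferences and hence to the absence of any weak-order lower bound. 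So your case analysis explicitly covers the incomparable-but-meetable pairs that the paper's argument leaves to Proposition \ref{proposition:bounds}(3), and it isolates precisely when the adjoined bottom $\c$ is actually needed; the paper's version is shorter but leans on a characterisation of incomparability that does not hold as stated. One small repair: you twice invoke the converse of Proposition \ref{proposition:bounds}(1), namely that $t \subseteq r$ implies $t \leq r$ for weak orders. This is true --- if $a \prec b$ strictly in $r$ then $(b,a) \notin r$, hence $(b,a) \notin t$, and completeness of $t$ forces $(a,b) \in t$ strictly --- but it is not what that proposition asserts, so the one-line argument should be supplied rather than cited.
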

\begin{proof}
    $\PC = \P \cup \{\c\}$ with bottom $\c$, we need only show that $r \wedge s$ exists for all incomparable elements $r$ and $s$ (with respect to $\leq$) in $\P$. Moreover, if $r$ and $s$ are incomparable then neither of them are $\c$, i.e., $r, s \in \P$. Hence, $r$ and $s$ must have opposing strict preferences (i.e., $a \prec b$ in $r$ and $b \prec a$ in $s$). By definition of $\c$ being a bottom element, $\c$ is the greatest lower bound of $r$ and $s$ if they have no other lower bound. Indeed, when $r$ and $s$ have opposing strict preferences, they do not have a lower bound in $\P$ and thus $\c$ is the only and hence greatest lower bound of $r$ and $s$.\\
\end{proof}

\dictatorequivalence*
\begin{proof}
    $(\implies)$ Assuming $\Welfare$ has a dictator at $i$, and considering an arbitrary $p \in \PN$, we prove $\Welfare(p) \join p_i = p_i$ as follows: Firstly, if $p_i = \i$, then $\Welfare(p) \join p_i = \Welfare(p) \join \i = \i = p_i$ is satisfied for all possible values of $\Welfare(p) \in \PC$. Otherwise, if $p_i \neq \i$ then Condition (1) implies $\Welfare(p) \neq \c$, which implies $\Welfare(p) \join p_i = \Welfare(p) \vee p_i$. But Condition (2) of Definition \ref{definition:dictator-generalised} implies $\Welfare(p) \leq p_i$, which implies $p_i = \Welfare(p) \vee p_i$. Combining the two facts, $\Welfare(p) \join p_i = \Welfare(p) \vee p_i = p_i$ as desired.\\

    \noindent $(\impliedby)$ Assuming $\Welfare(p) \join p_i = p_i$ always holds, we prove both conditions of Definition \ref{definition:dictator-generalised} hold as follows: For condition (1), if $\Welfare(p) = \c$ then $\forall r \in \PC$: $\Welfare(p) \join r = \i$. Hence, $\i = \Welfare(p) \join p_i = p_i$ as desired. For Condition (2), if $\Welfare(p) \neq \c$ then $\Welfare(p) \join p_i = \Welfare(p) \vee p_i$, and combined with our assumption that $\Welfare(p) \join p_i = p_i$, we have $\Welfare(p) \vee p_i = \Welfare(p) \join p_i = p_i$, and $\Welfare(p) \vee p_i = \Welfare(p)$ implies $\Welfare(p) \leq p_i$ as desired.\\
\end{proof}

\parsplit

\begin{lemma}
    \label{lemma:helper}
    Given an Embeddable Self-Reference System $(\enc, \emb, \comp)$, for $\DObarvar \coloneqq \emb \circ \enc$ we have:
    \begin{enumerate}
        \item $f \ast g = f \comp \DObar{g}$
        \item $(f \comp g) \ast h = f \comp (g \ast h)$
    \end{enumerate}
\end{lemma}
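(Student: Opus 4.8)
The plan is to unfold both claims directly from the definition of an Embeddable Self-Reference System (Definition \ref{definition:embeddable}), together with the associativity of composition $\comp$ and the shorthand conventions introduced around it. Both parts are essentially definitional, so the proof reduces to carefully chaining the relevant equations; the main care needed is in bookkeeping of where $\enc$, $\emb$, and $\app$ are invoked.

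For part (1), I would start from the shorthand $f \ast g = \app(f, \enc(g))$ of Note \ref{note:ast-shorthand}. The embeddability condition (Equation (\ref{equation:embeddable})) states that $\app(e, c) = e \comp \emb(c)$ for every expression $e$ and constant $c$. Applying this with $e \coloneqq f$ and $c \coloneqq \enc(g)$ gives $\app(f, \enc(g)) = f \comp \emb(\enc(g))$. Since $\DObarvar \coloneqq \emb \circ \enc$, the right-hand side is exactly $f \comp \DObar{g}$, which completes the first identity. This is really just spelling out the remark already made immediately after the definition of $\DObarvar$.

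For part (2), I would use part (1) twice and invoke associativity of $\comp$. Concretely, $(f \comp g) \ast h = (f \comp g) \comp \DObar{h}$ by part (1) applied to the expression $f \comp g$. By associativity of $\comp$ this equals $f \comp (g \comp \DObar{h})$, and applying part (1) again in reverse to the inner expression $g \comp \DObar{h} = g \ast h$ yields $f \comp (g \ast h)$, as desired. I would write this as a short chain of equalities rather than a display with blank lines.

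I do not anticipate a genuine obstacle here, since every step is forced by the definitions; the only subtlety worth flagging is that associativity of $\comp$ is part of the hypotheses built into Definition \ref{definition:embeddable} (the composition operation is required to be associative), so it may be invoked freely. The proof is essentially a verification that the one-typed reformulation via $\comp$ and $\DObarvar$ is compatible with the $\ast$ notation, which is precisely what makes the subsequent Abstract Diagonalisation Lemma computations (Theorem \ref{theorem:diag-abstract}) go through at the level of expressions alone.
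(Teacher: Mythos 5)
Your proposal is correct and follows essentially the same route as the paper's own proof: part (1) by unfolding $\ast$ via Note \ref{note:ast-shorthand} and the embeddability equation, and part (2) by applying part (1) twice around the associativity of $\comp$. The chains of equalities are identical to those in Appendix \ref{appendix:result-seciton-proofs}.
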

\begin{proof}
    This is given by calculations:
    \begin{enumerate}
        \item $f \ast g = \app(f, \enc(g)) = f \comp \emb(\enc(g)) = f \comp \DObar{g}$
        \item $(f \comp g) \ast h = (f \comp g) \comp \DObar{h} = f \comp (g \comp \DObar{h}) = f \comp (g \ast h)$
    \end{enumerate}
\end{proof}

\abstractdiagonalisationlemma*
\begin{proof}
    \allowdisplaybreaks
    For an arbitrarily $Q \in \E$, we define $q \coloneqq Q \comp f_\diag$, and $p \coloneqq q \ast q$, and find that $Q \ast p = p$ by the following calculation:
    \begin{align*}
        Q \ast p =\  & Q \comp \DObar{q \ast q}  &  & \text{Lemma \ref{lemma:helper}-1 and definition of $p$}       \\
        =\           & Q \comp (f_\diag \ast q)  &  & \text{$f_\diag$ internalising $\diag$ with respect to $\ast$} \\
        =\           & (Q \comp  f_\diag) \ast q &  & \text{Lemma \ref{lemma:helper}-2}                             \\
        =\           & q \ast q                  &  & \text{Definition of $q$}                                      \\
        =\           & p                         &  & \text{Definition of $p$}                                      \\
    \end{align*}
\end{proof}

\lemmaclassical*
\begin{proof}
    In $\L_0$, complements satisfy $\forall X \in \L_0$: $X \wedge \neg X = \bot$ and $X \vee \neg X = \top$. We have that $A = A \wedge \neg B$ by the following derivation:
    \begin{align*}
        A\  & =\quad  A \wedge \top                       &  & \text{$\top$ is the top of $\L_0$}            \\
            & =\quad  A \wedge (B \vee \neg B)            &  & \text{$B \vee \neg B = \top$}                 \\
            & =\quad  (A \wedge B) \vee (A \wedge \neg B) &  & \text{Distributivity of $\wedge$ over $\vee$} \\
            & =\quad  \bot \vee (A \wedge \neg B)         &  & \text{Assumption that $A \wedge B = \bot$}    \\
            & =\quad  A \wedge \neg B                     &  & \text{$\bot$ is the bottom of $\L_0$}
    \end{align*}
    and because $\wedge$ is a meet semi-lattice, we have that $A = A \wedge \neg B \iff A \leq \neg B$.\\
\end{proof}

\provabilitypredicates*
\begin{proof}
    We first note that the Hilbert–Bernays-Löb provability conditions~\cite[p.245]{peter-smith-tarski} apply to $\Provable(x)$. Below are two of these conditions that hold for any two formulae $X, Y \in \F_1$:
    \begin{enumerate}
        \item $\Theory \vdash X$ implies that $\Theory \vdash Provable(X)$.
        \item $\Theory \vdash \Provable(\Obar{X \rightarrow Y}) \rightarrow (\Provable(\Obar{X}) \rightarrow \Provable(\Obar{Y}))$.
    \end{enumerate}

    \noindent (1) $\Provable(\DObar{\ \top\ }) = \top$ because $\Theory \vdash \Provable(\DObar{\ \top\ }) \rightarrow \top$ holds by definition of $\top$, and $\Theory \vdash \top \rightarrow \Provable(\DObar{\ \top\ })$ holds by the first provability condition we listed.

    \parsplit

    \noindent (2) Completeness means that $\Theory \vdash D$ or $\Theory \vdash \neg D$, and by the first provability condition: $\Theory \vdash \Provable(\DObar{D})$ or $\Theory \vdash \neg \Provable(\DObar{\neg D})$. Thus, which ever is the case, we use disjunction introduction to get the desired identity.

    \parsplit

    \noindent (3) Recall that $\forall D \in \L_0$: $\Provable(x) \ast D = Provable(\DObar{D})$. This means that $\Provable(x)$ is consistency respecting when $\forall A, B \in \L_0 \setminus \{\bot\}$:
    \begin{equation*}
        A \wedge B = \bot \iff \Provable(\Obar{A}) \wedge \Provable(\Obar{B}) = \bot
    \end{equation*}
    We proceed to prove the above identity over the larger domain $\L_0$.

    \parsplit

    Firstly, we prove preservation, i.e., that $\forall A, B \in \L_0$
    \begin{equation*}
        A \wedge B = \bot \implies \Provable(\Obar{A}) \wedge \Provable(\Obar{B}) = \bot
    \end{equation*}
    Indeed, by Lemma \ref{lemma:classical}, we have that $A \leq \neg B$, i.e., $\Theory \vdash A \rightarrow \neg B$. Then by provability condition 1, we have that $\Theory \vdash \Provable(\Obar{A \rightarrow \neg B})$, which by provability condition 2, yields: $\Theory \vdash \Provable(\Obar{A}) \rightarrow \Provable(\Obar{\neg B})$.

    \parsplit

    \noindent Now, if $\Theory$ is $\omega$-consistent then $\Theory \vdash \Provable(\Obar{\neg B}) \rightarrow \neg \Provable(\Obar{B})$; thus we have $\Theory \vdash \Provable(\Obar{A}) \rightarrow \neg \Provable(\Obar{B})$.

    \parsplit

    Returning to $\L_0$ we have that $\Provable(\DObar{A}) \leq \neg \Provable(\DObar{B})$, which by Lemma \ref{lemma:classical} gives us the desired $\Provable(\Obar{A}) \wedge \Provable(\Obar{B}) = \bot$.

    \parsplit

    Secondly, we must show reflection, i.e., that:
    \begin{equation*}
        \Provable(\DObar{A}) \wedge \Provable(\DObar{B}) = \bot \implies A \wedge B = \bot
    \end{equation*}
    By provability condition 1 we have that $A \leq \Provable(\DObar{A})$ and $B \leq \Provable(\DObar{B})$, hence, $A \wedge B \leq \Provable(\DObar{A}) \wedge \Provable(\DObar{B}) = \bot$, which combined gives us $A \wedge B = \bot$ as $\bot$ is the bottom element of $\L_0$.

    \parsplit

    Finally, we attain $\Provable(\DObar{D}) \leq \neg \Provable(\DObar{\neg D})$ by invoking the above identities with $A = D$ and $B = \neg D$.\\
\end{proof}

\subsection{Outline of the Main Results}\label{appendix:guide}

\begin{longtable}{|p{0.2\linewidth}|p{0.75\linewidth}|}
    \hline
    \textbf{Section} \ref{subsection:encodings}     & \textbf{\subsectiontexta}                                                                                                                                                                                                                                                                                                                                                                                                                                                                                                                                                                                                \\\hline
    General Theory:                                 & An \term{encoding} is simply a function $\enc: \E \rightarrow \C$, from a set of \term{expressions} to a set of \term{constants}.                                                                                                                                                                                                                                                                                                                                                                                                                                                                                        \\\hline
    Arithmetic Logic:                               & Expressions are the Lindenbaum Algebra $\L_1$ of a theory and Constants are numbers $\N$.
    Encoding $\NGodelvar: \L_1 \rightarrow \N$ is a variation of Gödel Numbering that is well-defined over Lindenbaum Algebras (Definition \ref{definition:enc} \& Proposition \ref{proposition:well-defined}).                                                                                                                                                                                                                                                                                                                                                                                                                                                                \\\hline
    Social Choice Theory:                           & Constants are $\PC \coloneqq \P \cup \{\c\}$ for weak orders $\P$ and an object $\c$ representing contradictory preference cycles. An encoding is a Social Welfare Function whose domain and codomain are extended over $\c$ (Definitions \ref{definition:complete-condorcet-paradox} \& \ref{definition:extending-order}).                                                                                                                                                                                                                                                                                              \\\hline

    \textbf{Section} \ref{subsection:first-def}     & \textbf{\subsectiontextb}                                                                                                                                                                                                                                                                                                                                                                                                                                                                                                                                                                                                \\\hline
    General Theory:                                 & A \term{Self-Reference System} is a combination of an \term{encoding} and an \term{application} function
    $\app: \E \times \C \rightarrow \E$ (Definition \ref{definition:self-reference-system}).
    We define $e \ast f \coloneqq \app(e, \enc(f))$. Self-Reference arises out of expressions of the form: $e \ast e$.                                                                                                                                                                                                                                                                                                                                                                                                                                                                                                                                                         \\\hline
    Arithmetic Logic:                               & We take application to be variable substitution, i.e., $\app(B(x), n) = B(\underline{n})$ for the $n^{th}$ numeral $\underline{n}$.
    Self-reference arises out of propositions $B(x) \ast B(x) = B(\DObar{B(x)})$.                                                                                                                                                                                                                                                                                                                                                                                                                                                                                                                                                                                              \\\hline
    Social Choice Theory:                           & Application is typically defined as coordinate-wise usage of $\wedge$ and $\join$.
    For example, for a fixed individual $i$: $p \ast p$ is defined by replacing $p_i$ with $p_i \join \Welfare(p)$.
    We discuss expressions $p \ast p$ in terms of self-reference (Example \ref{example:arrow-1}).                                                                                                                                                                                                                                                                                                                                                                                                                                                                                                                                                                              \\\hline
    \textbf{Section} \ref{subsection:fp-property}   & \textbf{\subsectiontextc}                                                                                                                                                                                                                                                                                                                                                                                                                                                                                                                                                                                                \\\hline
    General Theory:                                 & The \term{Fixed-Point property} is satisfied for $e \in \E$ by $f \in \E$ if $e \ast f = f$.                                                                                                                                                                                                                                                                                                                                                                                                                                                                                                                             \\\hline
    Arithmetic Logic:                               & If the fixed-point property is satisfied for Expressions in $\L_1$ by expressions in $\L_0$ the Diagonalisation Lemma holds (Proposition \ref{proposition:diagonalisation-lemma-instance}).                                                                                                                                                                                                                                                                                                                                                                                                                              \\\hline
    Social Choice Theory:                           & A Social Welfare Function has a dictator if and only if in the Self-Reference System
    of Example \ref{example:arrow-1}, every profile $p \in \PN$ satisfies $p \ast p = p$ (Proposition \ref{proposition:dictator-as-fixed-point}).                                                                                                                                                                                                                                                                                                                                                                                                                                                                                                                              \\

    \hline
    \textbf{Section} \ref{subsection:embeddable}    & \textbf{\subsectiontextd}                                                                                                                                                                                                                                                                                                                                                                                                                                                                                                                                                                                                \\\hline
    General Theory:                                 & A Self-Reference System is \term{embeddable} if application can be expressed in terms of an \term{embedding} function $\emb: \C \rightarrow \E$ and a \term{composition} function $\comp: \E \times \E \rightarrow \C$ wherein $\app(e, f) = e \comp \DObar{f}$ for $\DObar{f} \coloneqq \emb(\enc(f))$.                                                                                                                                                                                                                                                                                                                 \\\hline
    Arithmetic Logic:                               & Embedding turns numbers into numerals and composition is proposition / predicate substitution (Example \ref{example:arrow-embedding}).                                                                                                                                                                                                                                                                                                                                                                                                                                                                                   \\\hline
    Social Choice Theory:                           & All the Self-Reference Systems of Social Choice Theory considered in this paper are embeddable by coordinate-wise uses of $\wedge$ and $\join$ (Example \ref{example:godel-embedding}).                                                                                                                                                                                                                                                                                                                                                                                                                                  \\\hline
    \textbf{Section} \ref{subsection:abstract-diag} & \textbf{\subsectiontexte}                                                                                                                                                                                                                                                                                                                                                                                                                                                                                                                                                                                                \\\hline
    General Theory:                                 & This theorem states the fixed-point property is satisfied for all expressions $e$ if a Self-Reference System has a \term{diagonaliser}, which is an expression $f_\diag$ such that $f_\diag \ast e = \DObar{e \ast e}$ (Theorem \ref{theorem:diag-abstract}).                                                                                                                                                                                                                                                                                                                                                            \\\hline
    Arithmetic Logic:                               & This holds in Arithmetic Logic with respect to a well-known diagonaliser predcate, and implies the usual Diagonalisation Lemma (Theorem \ref{theorem:arithmetic-logic-fixed-points}).                                                                                                                                                                                                                                                                                                                                                                                                                                    \\\hline
    Social Choice Theory:                           & This also holds in Social Choice Theory with respect to a diagonaliser called the $i^{th}$ indicator $\T_i$ (Definition \ref{definition:indicator}) and can characterise when a Social Welfare Functions has a dictator (Theorem \ref{theorem:dictator-as-diagonaliser}).                                                                                                                                                                                                                                                                                                                                                \\\hline
    \textbf{Section} \ref{subsection:const-resp}    & \textbf{\subsectiontextf}                                                                                                                                                                                                                                                                                                                                                                                                                                                                                                                                                                                                \\\hline
    General Theory:                                 & Given a meet semi-lattice $(\E, \wedge)$ of expressions with bottom $\bot$, a \term{Valid Subset} of $\E$ is a subset $\D \subseteq \E \setminus \{\bot\}$. $x, y \in \E$ are \term{consistent} when $x \wedge y \in \D$ (Definition \ref{definition:me}). An expression $\T \in \E$ is \term{consistency-respecting} if consistency between $d$ and $e$ are consistent if and only if $\T \ast d$ and $\T \ast e$ (Definition \ref{definition:consistency-respecting}).                                                                                                                                                 \\\hline
    Arithmetic Logic:                               & $\wedge$ and $\bot$ are their logical counterparts. In an $\omega$-consistent theory, its provability is consistency-respecting (Proposition \ref{proposition:provability-predicate}). This allows us to reformulate Gödel's Incompleteness Theorem (Theorem \ref{theorem:godel-new}).                                                                                                                                                                                                                                                                                                                                   \\\hline
    Social Choice Theory:                           & Non-dictatorship leading to contradictory preference cycles (Theorem \ref{theorem:arrow-full}) renders consistency-respecting expressions on $\PN$ impossible (Theorem \ref{theorem:arrow-main}), but possible given dictatorship (Proposition \ref{proposition:strong-dictator})                                                                                                                                                                                                                                                                                                                                        \\\hline
    \textbf{Section} \ref{subsection:argument}      & \textbf{\subsectiontextg}                                                                                                                                                                                                                                                                                                                                                                                                                                                                                                                                                                                                \\\hline
    General Theory:                                 & Given a Self-Reference System with a valid subset $\D$, we abstract elements of our proof of Gödel's Incompleteness Theorem (Theorem \ref{theorem:godel-main}), namely a pair of expressions $(\T, d)$ can satisfy \term{quasi-consistency}, \term{quasi-completeness}, and $d$ being \term{quasi-Gödel sentence} with respect to $\T$. A Self-Reference System is a \term{quasi-Gödelian system} with respect to $\T$ if there is a quasi-Gödel sentence $d$ of $\T$ such that \term{quasi-consistency} and \term{quasi-completeness} are mutually exclusive. \\\hline
    Arithmetic Logic:                               & Following our prior proof of Gödel's Incompleteness Theorem we see that the corresponding Self-Reference System is quasi-Gödelian with respect to $\Provable(x)$. This is because for $\G$ a (quasi-)Gödel sentence with respect to $\Provable(x)$, quasi-consistency (which follows from $\omega$-consistency) and quasi-completeness are mutually exclusive (Theorem \ref{theorem:godel-abstract}).                                                                                                                                                                               \\\hline
    Social Choice Theory:                           & If a Social Welfare Function produces contradictory preference cycle then $q$ is a quasi-Gödel sentence of the $i^{th}$ indicator $\T_i$ but quasi-consistency and quasi-completeness are shown to be mutually exclusive (Theorem \ref{theorem:condorcet-abstract}). However, if there is a dictator, there are no quasi-Gödel sentences with respect to $\T_i$ (Theorem \ref{theorem:dictator-abstract}), and the other constraints mutually satisfiable (Theorem \ref{theorem:dictator-abstract-2}).                                                                                        \\\hline
\end{longtable}

\end{document}